\theoremstyle{plain}
\newtheorem{thm}{Theorem}[section]
\newtheorem*{thm*}{Theorem}
\newtheorem*{cor*}{Corollary}
\newtheorem{prop}[thm]{Proposition}
\newtheorem{lem}[thm]{Lemma}
\newtheorem{cor}[thm]{Corollary}
\newtheorem*{claim*}{Claim}
\newtheorem{defn}[thm]{Definition}
\theoremstyle{definition}
\newtheorem{ex}[thm]{Example}
\newtheorem{Example}[thm]{Example}
\theoremstyle{remark}
\newtheorem{rem}[thm]{Remark}
\numberwithin{equation}{thm}
\def\Min{\operatorname{Min}}
\def\Ext{\operatorname{Ext}}
\def\Ker{\operatorname{Ker}}
\def\Hom{\operatorname{Hom}}
\def\Max{\operatorname{Max}}
\def\m{\mathfrak m}
\def\p{\mathfrak p}
\def\q{\mathfrak q}
\def\K{\mathrm{K}}
\newcommand{\im}{\mathop{\mathrm{Im}}\nolimits}
\newcommand{\rmK}{\mathrm{K}}
\newcommand{\rmQ}{\mathrm{Q}}
\newcommand{\rmU}{\mathrm{U}}
\newcommand{\rmV}{\mathrm{V}}
\newcommand{\rmW}{\mathrm{W}}
\newcommand{\fka}{\mathfrak{a}}
\newcommand{\fkm}{\mathfrak{m}}
\newcommand{\fkp}{\mathfrak{p}}
\newcommand{\fkq}{\mathfrak{q}}
\newcommand{\mapright}[1]{%
\smash{\mathop{%
\hbox to 1cm{\rightarrowfill}}\limits^{#1}}}
\newcommand{\mapleft}[1]{%
\smash{\mathop{%
\hbox to 1cm{\leftarrowfill}}\limits_{#1}}}
\def\depth{\operatorname{depth}}
\def\Supp{\operatorname{Supp}}
\def\Ass{\operatorname{Ass}}
\def\height{\mathrm{ht}}
\def\Spec{\operatorname{Spec}}
\title[On the weakly Arf $(\mathrm{S}_2)$-ifications of Noetherian rings]{On the weakly Arf {\bfseries (}$\mathbf{S_2}${\bfseries )}-ifications of Noetherian rings}
\author[Naoki Endo]{Naoki Endo}
\address{Department of Mathematics, Faculty of Science Division II, Tokyo University of Science, 1-3 Kagurazaka, Shinjuku, Tokyo 162-8601, Japan}
\email{nendo@rs.tus.ac.jp}
\urladdr{https://www.rs.tus.ac.jp/nendo/}
\author[Shiro Goto]{Shiro Goto}
\address{Department of Mathematics, School of Science and Technology, Meiji University, 1-1-1 Higashi-mita, Tama-ku, Kawasaki 214-8571, Japan}
\email{shirogoto@gmail.com}
\author[Shin-ichiro Iai]{Shin-ichiro Iai}
\address{Mathematics laboratory, Sapporo College, Hokkaido University of Education, 1-3 Ainosato 5-3, Kita-ku, Sapporo 002-8502, Japan}
\email{iai.shinichiro@s.hokkyodai.ac.jp}
\author[Naoyuki Matsuoka]{Naoyuki Matsuoka}
\address{Department of Mathematics, School of Science and Technology, Meiji University, 1-1-1 Higashi-mita, Tama-ku, Kawasaki 214-8571, Japan}
\email{naomatsu@meiji.ac.jp}
\thanks{2020 {\em Mathematics Subject Classification.} 13A15, 13H15, 13H10.}
\thanks{{\em Key words and phrases.} weakly Arf ring, $(S_2)$-ification, global canonical module}
\thanks{The first author was partially supported by JSPS Grant-in-Aid for Young Scientists 20K14299.} 
\thanks{The second author was partially supported by JSPS Grant-in-Aid for Scientific Research (C) 21K03211. }
\begin{document}

\maketitle



\begin{abstract}
The weakly Arf $(S_2)$-ification of a commutative Noetherian ring $R$ is considered to be a birational extension which is good next to the normalization. The weakly Arf property ({\em WAP} for short) of $R$ was introduced in 1971 by J. Lipman with his famous paper \cite{L}, and recently rediscovered by \cite{CCCEGIM}, being closely explored with further developments. The present paper aims at constructing, for a given Noetherian ring $R$ which satisfies certain mild conditions, the smallest module-finite birational extension of $R$ which satisfies WAP and the condition $(S_2)$ of Serre. We shall call this extension the weakly Arf $(S_2)$-ification, and develop the basic theory, including some existence theorems.
\end{abstract}


\section{Introduction}
The purpose of this paper is to construct, for a given commutative Noetherian ring $R$,  the smallest module-finite birational extension of $R$, satisfying the weakly Arf property and the condition $(S_2)$ of Serre.

In 1971 J. Lipman \cite{L} introduced the notion of Arf ring and developed the basic theory, extending the results of C. Arf \cite{Arf}, concerning  the multiplicity sequences of curve singularities, to those on one-dimensional Cohen-Macaulay rings. Let $R$ be a Noetherian semi-local ring and assume that $R$ is a Cohen-Macaulay ring of dimension one, i.e., for every $M \in \Max R$ the local ring $R_M$ is Cohen-Macaulay and of dimension one. Then we say that $R$ is {\it an Arf ring}, if the following conditions are satisfied (\cite[Definition 2.1]{L}), where $\overline{R}$ denotes the integral closure of $R$ in its total ring $\rmQ(R)$ of fractions.
\begin{enumerate}[$(1)$]
\item Every integrally closed ideal $I$ in $R$ which contains a non-zerodivisor has {\it a principal reduction}, i.e., $I^{n+1} = a I^n$ for some $n \ge 0$ and $a \in I$.
\item Let $x,y,z \in R$ such that $x$ is a non-zerodivisor on $R$. If $\frac{y}{x}, \frac{z}{x} \in \overline{R}$, then $\frac{yz}{x} \in R$.
\end{enumerate}

\noindent
Here, we notice that, provided $\overline{R}$ is finite as an $R$-module, among the Arf rings between $R$ and  $\overline{R}$, there exists the smallest one, called {\it the Arf closure} of $R$.

In \cite{L} Lipman introduced also the notion of strict closure for arbitrary extensions of two commutative rings and developed the underlying theory in connection with a conjecture of O. Zariski. As is stated in \cite{L}, Zariski conjectured that the Arf closure of $R$ should coincide with the strict closure of $R$ in $\overline{R}$, provided $R$ is a one-dimensional Cohen-Macaulay ring, and Zariski himself proved that the Arf closure is contained in the strict closure  (\cite[Proposition 4.5]{L}). Lipman proved the conjecture in the case where $R$ contains a field (\cite[Corollary 4.8]{L}), and in \cite[Theorem 4.4]{CCCEGIM} the authors settled Zariski's conjecture with full generality, by looking into a slight modification of Arf closures which they call {\it the weak  Arf closures}.

The research \cite{CCCEGIM} was originally inspired by \cite{L} and aimed at a higher-dimensional generalization of the theory of Lipman. In \cite{CCCEGIM} the authors introduced the notion of weakly Arf ring, and developed the fundamental theory. 
\begin{defn}[Lemma \ref{defArf}]\label{1} Let $R$ be an arbitrary commutative ring and let $$\rmW(R) = \{a \in R \mid a~\text{is~a~non-zerodivisor~of}~R\}.$$ Then the following three conditions are equivalent.
\begin{enumerate}
	\item[$(1)$] The ring $R$ satisfies Condition $(2)$ in the above definition of Arf rings. 
	\item[$(2)$] $\overline{aR\, }^{\, 2}=a\,\overline{(aR)}$ for all $a\in\rmW(R)$, where $\overline{I}$ denotes, for an ideal $I$ of $R$, the integral closure of $I$.
	\item[$(3)$] $\dfrac{R}{a}\cap \overline{R}$ is a subring of $\overline{R}$ for every $a\in\rmW(R)$, where $\dfrac{R}{a}=a^{-1}R$ in $\rmQ(R)$.
\end{enumerate}
When this is the case, we say that  $R$ is a weakly Arf ring.
\end{defn}

\noindent
Let us  call Condition $(2)$ in the definition of Arf rings {\it the weakly Arf property} ({\it WAP} for short) of $R$. Both of strict and weakly Arf closures are defined for arbitrary commutative rings. This fact leads us to the natural question of when the two closures coincide, and it is proved by \cite[Corollary 7.7]{CCCEGIM} that with certain mild assumptions on $R$ (e.g., $R$ contains an infinite field), the answer to the question is affirmative, once the weakly Arf closure satisfies $(S_2)$. Nevertheless, as is pointed out by \cite[Example 4.3]{EGI}, in general the weakly Arf closure does not satisfy $(S_2)$, even if the strict and weakly Arf closures coincide.

This fact strongly urged us to start the present research. The main purpose is to show, for a given Noetherian ring $R$, the existence of the weakly Arf $(S_2)$-ification, i.e., the smallest module-finite birational extension of $R$ which satisfies both of WAP and $(S_2)$.

Before stating the result of this paper, let us give a little more comment about weakly Arf rings. As is stated in Definition \ref{1},  a commutative ring $R$ is weakly Arf if and only if $\overline{aR}^2=a\overline{(aR)}$ for every $a \in \rmW(R)$ (see \cite[Theorem 2.4]{CCCEGIM} also), while the integral closedness of $R$ is equivalent to saying that $\overline{aR}=aR$ for every $a \in \rmW(R)$. Namely, the WAP is originally  very close to the integral closedness, which is the reason, for a Noetherian ring $R$, why the weakly Arf $(S_2)$-ification is of interest next to the normalization $\overline{R}$, and may have its own significance.

With this observation the conclusion of this paper can be stated as follows.

\begin{thm}[Corollary \ref{integralcor3}]\label{main}
Let $R$ be a locally quasi-unmixed Noetherian ring which satisfies the condition $(S_1)$ of Serre. If $\overline{R}$ is a finitely generated $R$-module, then $R$ admits the weakly Arf $(S_2)$-ification.
\end{thm}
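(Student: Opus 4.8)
The plan is to build the extension by alternately applying two operations inside the module-finite $R$-algebra $\overline R$: passing to a weakly Arf closure, and passing to an $(S_2)$-ification. The process will terminate because everything in sight lives inside the Noetherian $R$-module $\overline R$.

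First I would record the two basic closures. Since $\overline R$ is module-finite over $R$, every module-finite birational extension $S$ of $R$ satisfies $R\subseteq S\subseteq\overline R$, is again Noetherian with $\overline S=\overline R$ module-finite over $S$, and — as established in the earlier sections — is again locally quasi-unmixed and satisfies the hypotheses needed below. For such an $S$: by \cite{CCCEGIM} the weakly Arf closure $\mathrm{wArf}(S)$ exists, namely the smallest module-finite birational extension of $S$ having WAP; and by the results on $(S_2)$-ifications recalled earlier, $S$ has an $(S_2)$-ification $S^{\sharp}$, namely the smallest module-finite birational extension of $S$ satisfying $(S_2)$. In particular $\mathrm{wArf}(S)$ always has WAP and $S^{\sharp}$ always satisfies $(S_2)$; moreover, if $S\subseteq T\subseteq\overline R$ with $T$ module-finite over $R$, then $T$ having WAP forces $\mathrm{wArf}(S)\subseteq T$, and $T$ satisfying $(S_2)$ forces $S^{\sharp}\subseteq T$ — this is just the minimality in the two definitions, used relatively over $S$ (note any such $T$ is automatically module-finite and birational over $S$).

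Now I would run the iteration: put $R_0=R$, and recursively $R_{2i+1}=\mathrm{wArf}(R_{2i})$, $R_{2i+2}=(R_{2i+1})^{\sharp}$. This yields an ascending chain $R=R_0\subseteq R_1\subseteq R_2\subseteq\cdots\subseteq\overline R$ of module-finite birational extensions of $R$. Since $\overline R$ is a Noetherian $R$-module, the chain stabilizes, say $R_n=R_{n+1}=R_{n+2}=\cdots$; set $A=R_n$. Then $A=\mathrm{wArf}(A)$ shows $A$ has WAP, and $A=A^{\sharp}$ shows $A$ satisfies $(S_2)$, so $A$ is a module-finite birational extension of $R$ with WAP and $(S_2)$. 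For minimality, let $S$ be any module-finite birational extension of $R$ with WAP and $(S_2)$ — necessarily $S\subseteq\overline R$. An easy induction gives $R_i\subseteq S$ for all $i$: it holds for $i=0$; if $R_{2i}\subseteq S$ then, since $S$ has WAP and is module-finite birational over $R_{2i}$, minimality of the weakly Arf closure gives $R_{2i+1}\subseteq S$; if $R_{2i+1}\subseteq S$ then, since $S$ satisfies $(S_2)$ and is module-finite birational over $R_{2i+1}$, minimality of the $(S_2)$-ification gives $R_{2i+2}\subseteq S$. Hence $A\subseteq S$, so $A$ is the smallest module-finite birational extension of $R$ with WAP and $(S_2)$, i.e. the weakly Arf $(S_2)$-ification of $R$.

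The main obstacle — and the real work, which I would draw from the earlier sections — is showing that the two constructions keep the iteration inside their domain of applicability: that a module-finite birational extension of a locally quasi-unmixed $(S_1)$ ring with module-finite normalization is again of this type, so that $\mathrm{wArf}(R_{2i})$ still has an $(S_2)$-ification (here $(S_2)$ itself restores $(S_1)$ after the first step). Granting those preservation results, the termination-and-minimality argument above finishes the proof; I would also expect that a closer look at how WAP interacts with $(S_2)$ at height-one primes shows the chain in fact stabilizes after only a few steps, which would give a more explicit description of $A$.
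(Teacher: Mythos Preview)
Your argument is correct: iterating weak Arf closure and $(S_2)$-ification inside the Noetherian $R$-module $\overline R$ must stabilize, and the induction establishing minimality goes through once one checks (as you note) that module-finite birational extensions inherit local quasi-unmixedness, $(S_1)$, and module-finite normalization. These preservation facts are routine and are used implicitly in the paper as well.

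The paper, however, does not iterate. Its main technical result (Theorem~\ref{mainthm}) is that the $(S_2)$-ification of a weakly Arf ring is again weakly Arf: writing $\widetilde{S}=\dfrac{S}{a}\cap\dfrac{S}{b}$ via Theorem~\ref{conclusionab}, one has $\widetilde{S}=\dfrac{\overline{aS}}{a}\cap\dfrac{\overline{bS}}{b}$, and each factor inherits WAP while WAP is stable under intersections. Consequently the chain you build stabilizes after exactly two steps: $R\to R^a\to\widetilde{R^a}$ is already the weakly Arf $(S_2)$-ification. So the ``closer look'' you anticipate at the end is precisely the content of the paper, though it is not a height-one-prime argument but rather the structural description of $\widetilde{S}$ as an intersection of blowup-type subrings. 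What your approach buys is a proof of existence that bypasses Theorem~\ref{mainthm} entirely, relying only on the ascending chain condition; what the paper's approach buys is the explicit identification $\widetilde{R^a}$, which is essential for computing examples such as Example~\ref{ex}.
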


We are now in a position to explain how this paper is organized. In Section $2$ we explain the basic methods of constructing $(S_2)$-ifications. For a Noetherian ring $R$ which satisfies $(S_1)$, the ring 
$$
\widetilde{R}=\{f\in\rmQ(R)\mid I f\subseteq  R\ \text{for some ideal}\ I\ \text{with}\ \height_RI\ge 2\}
$$ 
is the $(S_2)$-ification of $R$ and has the form $\widetilde{R} = 
\dfrac{R}{a}\cap\dfrac{R}{b}$ in $\rmQ(R)$ for some $a,b \in \rmW(R)$, once 
$\widetilde{R}$ is a finitely generated  $R$-module (cf. \cite{AG,B,EGA4-2,GS,G,HH,Sc}). This naturally leads us to the problem of when the $(S_2)$-ifications $\widetilde{R}$ of $R$ is a Noetherian ring, or more specifically, when $\widetilde{R}$ is a finitely generated $R$-module, which we shall answer in Section 3. Section 2 is devoted to some preliminaries which we will later use in Section 4. We shall summarize also the basic results on the global canonical modules in Section 3, and applying these results, we eventually prove Theorem \ref{main} in the final section 4.

In what follows, unless otherwise specified, let $R$ denote an arbitrary commutative ring, and $\overline{R}$ the integral closure of $R$ in its total ring $\rmQ(R)$ of fractions. For an $R$-module $M$, let $\mathrm{W}(M)=\{ a \in R \mid a~\text{is~a~non-zerodivisor~for}~M\}$.

\section{Construction of $(S_2)$-ifications}

In this section, let $M$ be an $R$-module and set $W=\mathrm{W}(M)$.
We consider $M$ as an $R$-submodule of $W^{-1}M$ the localization of $M$ with respect to $W$. For each $a \in W$ and an $R$-submodule $L$ of $W^{-1}M$, we set 
$$
\frac{L}{a} =\left\{ \frac{x}{a} ~\bigg|~  x\in L \right\}  \ \subseteq \  W^{-1}M
$$  
which forms an $R$-submodule of $W^{-1}M$ containing $L$, where $\displaystyle \frac{x}{a} = a^{-1} x$ for all $x\in L$. 
We shall explore, for each $a, b \in W$, the $R$-submodule of $W^{-1}M$ of the form
$$
\frac{M}{a}\cap\frac{M}{b}=\frac{aM:_Mb}{a}.
$$

Recall that, for $a_1, a_2, \dots a_\ell \in R~(\ell>0)$ and an $R$-module $N$, the sequence $a_1, a_2, \dots a_\ell$ is called {\it $N$-regular}, if $a_i$ is a non-zerodivisor on $N/(a_1, a_2, \dots a_{i-1})N$ for every $1\le i\le \ell$. 
Here, we do not necessarily assume $N/(a_1, a_2, \dots a_\ell)N \ne (0)$.

We begin with the following, which is essentially given by \cite{GS, G}.

\begin{thm}[cf. \cite{GS, G}]\label{ab}
Let $a,b\in W$. Then the following two conditions are equivalent.
\begin{enumerate}
\item[$(1)$] The sequence $a, b$ is $\displaystyle \left(\frac{M}{a}\cap\frac{M}{b}\right)$-regular.
\item[$(2)$] The equality $\displaystyle \frac{M}{a}\cap\frac{M}{b}=\frac{M}{a^2}\cap\frac{M}{b^2}$ holds.
\end{enumerate}
When this is the case, provided $M=R$, the $R$-module $\dfrac{R}{a}\cap\dfrac{R}{b}$ is a subring of $\rmQ(R)$.
\end{thm}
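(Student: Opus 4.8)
The plan is to prove the equivalence $(1)\Leftrightarrow(2)$ first, and then deduce the ring statement for $M=R$ from the characterization of weakly Arf rings in Definition~\ref{1}(3) together with condition~(2).

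For the equivalence, set $N=\frac{M}{a}\cap\frac{M}{b}=\frac{aM:_Mb}{a}$. Note that $N$ always contains $M$, and clearly $\frac{M}{a}\cap\frac{M}{b}\subseteq\frac{M}{a^2}\cap\frac{M}{b^2}$ since $\frac{M}{a}\subseteq\frac{M}{a^2}$ and likewise for $b$; so the content of (2) is the reverse inclusion. First I would show $(1)\Rightarrow(2)$. Take $\xi\in\frac{M}{a^2}\cap\frac{M}{b^2}$, so $a^2\xi\in M$ and $b^2\xi\in M$. Then $a\xi\in\frac{M}{a}$ and $b(a\xi)=a(b\xi)$; writing $a\xi=\frac{u}{a}$ with $u\in M$ one checks $a\xi\in\frac{M}{a}\cap\frac{M}{b}=N$ after multiplying through, and similarly $b\xi\in N$. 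Thus $a\xi,b\xi\in N$ with $b\cdot(a\xi)=a\cdot(b\xi)$ in $N$, so regularity of the sequence $a,b$ on $N$ forces $a\xi\in aN$, i.e.\ $\xi\in N$ (using that $a$ is a non-zerodivisor on $N\subseteq W^{-1}M$). Hence $\xi\in\frac{M}{a}\cap\frac{M}{b}$, proving (2). For $(2)\Rightarrow(1)$: $a\in W$ is automatically a non-zerodivisor on $N$. For the second step, suppose $x\in N$ with $bx\in aN$, say $bx=ay$ with $y\in N$; I must show $x\in aN$, equivalently $\frac{x}{a}\in N=\frac{M}{a}\cap\frac{M}{b}$, i.e.\ $a\cdot\frac{x}{a}=x\in M$ and $b\cdot\frac{x}{a}\in M$. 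From $x\in N$ we get $ax,bx\in M$; from $bx=ay$ and $y\in N$ we get $b y\in M$ so $b^2 x=ab y$, giving $\frac{x}{a}\in\frac{M}{b^2}$ after dividing (use $ab\in W$), and similarly $a^2$ times $\frac{x}{a}$ lands in $M$, so $\frac{x}{a}\in\frac{M}{a^2}\cap\frac{M}{b^2}=\frac{M}{a}\cap\frac{M}{b}=N$. Hence $x\in aN$.

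For the final assertion, assume $M=R$ and that the equivalent conditions hold. By Definition~\ref{1}, to see $\widetilde{R}:=\frac{R}{a}\cap\frac{R}{b}$ is a subring of $\rmQ(R)$ it suffices to note it is closed under multiplication (it visibly contains $R$ and is an $R$-submodule of $\rmQ(R)$). Take $f,g\in\widetilde{R}$, so $af,bf,ag,bg\in R$. Then $a^2(fg)=(af)(ag)\in R$ and $b^2(fg)=(bf)(bg)\in R$, so $fg\in\frac{R}{a^2}\cap\frac{R}{b^2}$, which by condition~(2) equals $\frac{R}{a}\cap\frac{R}{b}=\widetilde{R}$. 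Thus $fg\in\widetilde{R}$, and $\widetilde{R}$ is a subring of $\rmQ(R)$.

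The main obstacle is the bookkeeping in the equivalence: one must be careful that the various quotients $\frac{x}{a}$, $\frac{x}{a^2}$ genuinely live in $W^{-1}M$ and that dividing by $a$, $b$, or $ab$ is legitimate precisely because $a,b\in W=\mathrm W(M)$, so these remain non-zerodivisors on $M$ (and on submodules of $W^{-1}M$). Once this is set up cleanly, the argument is the two short diagram chases above; the ring statement is then essentially formal, the key point being the ``squaring trick'' $a^2(fg)=(af)(ag)$ that moves a product of elements of $\widetilde R$ into $\frac{R}{a^2}\cap\frac{R}{b^2}$, which collapses back to $\widetilde R$ by~(2).
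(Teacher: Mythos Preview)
Your argument for $(2)\Rightarrow(1)$ is correct and amounts to the same computation as the paper's (which packages it as the chain $L\subseteq\frac{L}{a}\cap\frac{L}{b}\subseteq\frac{M}{a^2}\cap\frac{M}{b^2}=L$). Your proof of the ring statement via the ``squaring trick'' $a^2(fg)=(af)(ag)\in R$, $b^2(fg)=(bf)(bg)\in R$, hence $fg\in\frac{R}{a^2}\cap\frac{R}{b^2}=\frac{R}{a}\cap\frac{R}{b}$, is correct and in fact cleaner than the paper's route (the paper instead shows $I^2=aI$ for $I=aR:_Rb$ and identifies $\frac{I}{a}$ with the blow-up ring $R[I/a]$). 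The reference to Definition~\ref{1} in that paragraph is spurious, though harmless: all you use is that an $R$-submodule of $\rmQ(R)$ containing $1$ and closed under products is a subring.

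There is, however, a genuine gap in your $(1)\Rightarrow(2)$ direction. You assert that for $\xi\in\frac{M}{a^2}\cap\frac{M}{b^2}$ one has $a\xi\in N=\frac{M}{a}\cap\frac{M}{b}$, claiming this follows ``after multiplying through''. But $a\xi\in\frac{M}{b}$ means $ab\xi\in M$, and from $a^2\xi,\,b^2\xi\in M$ alone no algebraic manipulation produces $ab\xi\in M$: for a general $R$-submodule $M\subseteq W^{-1}M$ (with no multiplicative structure) this simply fails. The point is that you must \emph{already} invoke the regularity hypothesis~(1) to get $a\xi\in N$. The paper does exactly this: set $u=a^2\xi\in M\subseteq N$ and $v=b^2\xi\in M\subseteq N$; then $b(bu)=a(av)\in aN$, so one application of $aN:_Nb=aN$ gives $bu\in aN$, and a second application gives $u\in aN$, i.e.\ $a\xi\in N$. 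Symmetrically $b\xi\in N$ (using that $b,a$ is also $N$-regular). With this step supplied, your final move---$b(a\xi)=a(b\xi)$ forces $a\xi\in aN$, hence $\xi\in N$---is exactly right and matches the paper's concluding line.
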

\begin{proof}
We first prove the last assertion. Suppose $M=R$. By $(2)$, we have the equalities
$$
\dfrac{R}{a}\cap\dfrac{R}{b} = \frac{aR:_Rb}{a}=\frac{a^2R:_Rb^2}{a^2}. 
$$
This yields $a^2R:_Rb^2=a(aR:_Rb)$. Since $I=aR:_Rb$ is an ideal of $R$ with $I^2=aI$, the $R$-submodule $\dfrac{I}{a} = R\left[\dfrac{I}{a}\right]$ has a ring structure. 
Hence $\dfrac{R}{a}\cap\dfrac{R}{b}$ is a subring of $\rmQ(R)$.

$(2)\Rightarrow (1)$ By setting $L=\dfrac{M}{a}\cap\dfrac{M}{b}$, we get
$$
L=\dfrac{M}{a}\cap\dfrac{M}{b}\subseteq \dfrac{L}{a}\cap\dfrac{L}{b}\subseteq \dfrac{M}{a^2}\cap\dfrac{M}{b^2}=L,
$$ 
whence $L=\dfrac{L}{a}\cap\dfrac{L}{b}$. This shows, because $a \in W(L)$ and $L= \dfrac{aL:_Lb}{a}$, the sequence $a, b$ is $L$-regular.

$(1)\Rightarrow (2)$ Note that  $L=\dfrac{M}{a}\cap\dfrac{M}{b}\subseteq \dfrac{M}{a^2}\cap\dfrac{M}{b^2}$. Conversely, we take $f\in \dfrac{M}{a^2}\cap\dfrac{M}{b^2}$ and write 
$$
f=\dfrac{x}{a^2}=\dfrac{y}{b^2} \ \  \ \text{in} \ \ \ W^{-1}M
$$
for some $x,y\in M$. Since $M\subseteq L$, we have 
$b(bx)=a(ay)\in aL$. This implies $x\in aL$, because the sequence $a,b$ is $L$-regular. Thus $\dfrac{x}{a}\in L$. Similarly, because the sequence $b,a$ is also $L$-regular, we get $\dfrac{y}{b}\in L$. 
The equality $\dfrac{x}{a^2}=\dfrac{y}{b^2}$ implies $b\cdot \dfrac{x}{a}=a\cdot \dfrac{y}{b}\in aL,$ so that $\dfrac{x}{a}\in aL:_Lb = aL$. Hence $f = \dfrac{x}{a^2}\in L$, as desired. 
\end{proof}

\begin{Example}
Let $R=k[x,y]$ the polynomial ring over a field $k$. We set $\m=(x,y)R$ and 
$M=\left\langle 
\begin{pmatrix}
x\\0
\end{pmatrix},
\begin{pmatrix}
y\\x
\end{pmatrix},
\begin{pmatrix}
0\\y
\end{pmatrix}
\right\rangle\subseteq \begin{array}{l}
R\vspace{-1.5mm}\\ \oplus\vspace{-1mm}\\ R
\end{array}~(=R^2)$.
Then
\begin{center}
$
\dfrac{M}{x}\cap\dfrac{M}{y}=\begin{array}{l}
\m\vspace{-1mm}\\ \oplus\vspace{-1mm}\\ \m
\end{array}
$
\quad and\quad\ \
$
\dfrac{M}{x^n}\cap\dfrac{M}{y^n}=\begin{array}{l}
R\vspace{-1.5mm}\\ \oplus\vspace{-1mm}\\ R
\end{array}
$
\end{center}
for all integers $n\ge 2$.
\end{Example}

\begin{proof}
For each $n >0$, note that $(x^n R^2)\cap (y^n R^2)=x^ny^n R^2$ because $x^n, y^n$ is $R^2$-regular. This shows $x^nM\cap y^n M\subseteq x^ny^n R^2$, and the equality holds for $n\ge2$. Indeed, the equalities
$$\begin{pmatrix}
x^ny^n\\0
\end{pmatrix}=x^n\left(
y^{n-1}\begin{pmatrix}
y\\x
\end{pmatrix}+
xy^{n-2}\begin{pmatrix}
0\\y
\end{pmatrix}\right)
=y^n\left(
x^{n-1}\begin{pmatrix}
x\\0
\end{pmatrix}\right)
$$
imply  $\begin{pmatrix}
x^ny^n\\0
\end{pmatrix}\in x^nM\cap y^n M$. 
Similarly $\begin{pmatrix}
0\\x^ny^n
\end{pmatrix}\in x^nM\cap y^n M$.  Hence $x^nM\cap y^n M= x^ny^n R^2$, and therefore $\dfrac{M}{x^n}\cap\dfrac{M}{y^n}=\dfrac{x^nM\cap y^n M}{x^ny^n}=R^2$. To prove $\dfrac{M}{x}\cap\dfrac{M}{y}=\begin{array}{l}
\m\vspace{-1mm}\\ \oplus\vspace{-1mm}\\ \m
\end{array}
$,
it suffices to show that
$$
xM\cap yM=\left\langle 
\begin{pmatrix}
x^2y\\0
\end{pmatrix},
\begin{pmatrix}
xy^2\\0
\end{pmatrix},
\begin{pmatrix}
0\\x^2y
\end{pmatrix},
\begin{pmatrix}
0\\xy^2
\end{pmatrix}
\right\rangle.
$$
To do this, let $f\in xM\cap y M$ and write 
$$f=
x\left(f_1\begin{pmatrix}
x\\0
\end{pmatrix}+
f_2\begin{pmatrix}
y\\x
\end{pmatrix}+
f_3\begin{pmatrix}
0\\y
\end{pmatrix}\right)
=
y\left(g_1\begin{pmatrix}
x\\0
\end{pmatrix}+
g_2\begin{pmatrix}
y\\x
\end{pmatrix}+
g_3\begin{pmatrix}
0\\y
\end{pmatrix}
\right)
$$
where $f_i,g_j\in R$. Then $x(f_1x+f_2y)=y(g_1x+g_2y)$ and $x(f_2x+f_3y)=y(g_2x+g_3y)$. As $x, y$ is $R$-regular, we can choose $h_1, h_2, h_3, h_4\in R$ such that $f_1x+f_2y=yh_1$, $f_2x+f_3y=yh_2$, $g_1x+g_2y=xh_3$, $g_2x+g_3y=xh_4$. Therefore  $f_1=yh_5$, $f_2=yh_6$, $g_2=xh_7$, $g_3=xh_8$ for some $h_5, h_6, h_7, h_8\in R$.
Then, because $f_3=-xh_6+xh_7+yh_8$, we get
\begin{align*}
f&=xf_1\begin{pmatrix}
x\\0
\end{pmatrix}+
xf_2\begin{pmatrix}
y\\x
\end{pmatrix}+
xf_3\begin{pmatrix}
0\\y
\end{pmatrix}\\
&=x(yh_5)\begin{pmatrix}
x\\0
\end{pmatrix}+
x(yh_6)\begin{pmatrix}
y\\x
\end{pmatrix}+
x(-xh_6+xh_7+yh_8)\begin{pmatrix}
0\\y
\end{pmatrix}\\
&=
h_5\begin{pmatrix}
x^2y\\0
\end{pmatrix}+
h_6\begin{pmatrix}
xy^2\\0
\end{pmatrix}+
h_7\begin{pmatrix}
0\\x^2y
\end{pmatrix}+
h_8\begin{pmatrix}
0\\xy^2
\end{pmatrix}. 
\end{align*}
It is straightforward to check the converse. This completes the proof.
\end{proof}

In what follows, we assume $M$ is a torsion-free $R$-module and set
$$
V=\mathrm{Q}(R)\otimes_R M.
$$
Hence, every $f \in V$ has an expression of the form $f = \frac{m}{a}$ where  $a \in \rmW(R)$ and $m \in M$.

\medskip

Let $\operatorname{Ht}_{\ge 2}(R)$ denote the set of all ideals of $R$ which have height at least $2$. We consider $\height_RR=\infty$ for convention. Hence $R\in\operatorname{Ht}_{\ge 2}(R)$. Define
$$
\widetilde{M}=\{f\in V \mid If\subseteq M\ \mathrm{for}\ \mathrm{some}\ I\in\operatorname{Ht}_{\ge 2}(R)\} \subseteq V.
$$
Since $IJ \in \operatorname{Ht}_{\ge 2}(R)$ for all $I, J \in \operatorname{Ht}_{\ge 2}(R)$, $\widetilde{M}$ is an $R$-submodule of $V$ containing the $R$-module $M$.
For an $R$-submodule $N$ of $V$ with $M\subseteq N\subseteq V$, we then have $\widetilde{\,N\, }$ is the $R$-submodule of $V$, whence $\widetilde{M}\subseteq \widetilde{\,N\, }$. 
When $M=R$, we identify $V$ with $\rmQ(R)$, so the $R$-submodule $\widetilde{R}$ is a subring of $\rmQ(R)$. Then $\widetilde{M}$ is an $\widetilde{R}$-submodule of $\rmQ(R)$.

We now summarize some basic results on the $R$-module $\widetilde{M}$. A part of them will be also appeared in the forthcoming paper \cite[Section 2]{EGIM}. 
Because the results play an important role in our argument, let us include brief proofs for the sake of completeness.

\begin{prop}\label{basic}
Let $a,b\in \mathrm{W}(R)$. If $\height_R(a,b)\ge 2$, then the sequence $a,b$ is $\widetilde{M}$-regular.
\end{prop}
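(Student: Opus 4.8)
The plan is to show directly that $a$ is a non-zerodivisor on $\widetilde{M}$ and that $b$ is a non-zerodivisor on $\widetilde{M}/a\widetilde{M}$. Since $\widetilde{M}\subseteq V=\rmQ(R)\otimes_R M$ and $M$ is torsion-free, every element of $\widetilde{M}$ has the form $\frac{m}{c}$ with $c\in\rmW(R)$, $m\in M$; in particular $a\in\rmW(R)$ acts injectively on $V$, hence on $\widetilde{M}$. So the first claim is free, and the whole content is the second claim: if $af\in a\widetilde{M}\cap b\widetilde{M}$, equivalently if $f\in\widetilde{M}$ and $af=bg$ for some $g\in\widetilde{M}$, then $f\in b\widetilde{M}$, i.e. $g\in a\widetilde{M}$.

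First I would unwind the definition. Since $f,g\in\widetilde{M}$, there are ideals $I,J\in\operatorname{Ht}_{\ge 2}(R)$ with $If\subseteq M$ and $Jg\subseteq M$; replacing both by $K=IJ$ (still of height $\ge 2$, as noted in the excerpt) we may assume $Kf\subseteq M$ and $Kg\subseteq M$ for a single ideal $K$ with $\height_R K\ge 2$. The relation $af=bg$ in $V$ then says, for every $x\in K$, that $a(xf)=b(xg)$ is an equality in $M$ with $xf,xg\in M$. The goal $g\in a\widetilde{M}$ would follow if we can produce an ideal $L$ of height $\ge 2$ with $Lg\subseteq aM$, because then $\frac{g}{a}\in\widetilde{M}$.

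The key local step is: for a prime $\p$ with $\height_R\p=1$, we want $\frac{g}{a}\in M_\p$, and the height-$\ge 2$ ideal then patches these up. Here is where I expect to use a height argument together with the associativity/regularity machinery. Localizing at such a $\p$, the ideal $K$ either escapes $\p$ (so $K_\p=R_\p$, giving $f,g\in M_\p$ directly) or $\height K_\p\le 1$; in the delicate case one wants $a,b$ to behave like a regular sequence after localizing at height-one primes. The cleanest route is: for $\p\in\Spec R$ with $\depth R_\p\ge 2$ — which by $(S_1)$-type reasoning holds off a height-$\ge 2$ locus, or more precisely one argues at primes of height $\le 1$ where $\height(a,b)\ge 2$ forces $(a,b)_\p\ne R_\p$ to be impossible unless $\p$ contains at most... — actually the honest statement is that $\height_R(a,b)\ge 2$ means $(a,b)$ is contained in no height-$\le 1$ prime, so for every $\p$ with $\height_R\p\le 1$ at least one of $a,b$ is a unit in $R_\p$; then $af=bg$ locally shows $f\in M_\p$ and $g\in M_\p$ trivially (if $a\notin\p$ then $g=\frac{a}{b}f$... one must be a bit careful, but the point is both $f,g$ land in $M_\p$ off the height-$\ge 2$ locus). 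Consequently $\frac{g}{a}$ and $\frac{f}{b}$ lie in $M_\p$ for all height-$\le 1$ primes $\p$; intersecting, the ideal $L=\{r\in R\mid r\cdot\frac{g}{a}\in M\}$ is not contained in any height-$\le 1$ prime, hence $\height_R L\ge 2$, hence $\frac{g}{a}\in\widetilde{M}$, i.e. $g\in a\widetilde{M}$, which is exactly $b$-regularity on $\widetilde{M}/a\widetilde{M}$.

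The main obstacle will be the bookkeeping at height-one primes: making precise that $af=bg$ forces both $f$ and $g$ into $M_\p$ when one of $a,b$ is a unit there, and — if $M$ is not assumed to satisfy $(S_1)$ as a module — controlling the possibility that $M_\p$ has embedded behaviour. If needed I would invoke that $\widetilde{M}$ agrees with $M$ in codimension $\le 1$ by construction and argue entirely inside $\widetilde{M}$, using that $\widetilde{M}/a\widetilde{M}\hookrightarrow$ (something) has no associated primes of height $\le 1$; that reformulation — "$\Ass_R(\widetilde{M}/a\widetilde{M})$ contains no prime of height $\le 1$, so $b$, which lies in no such prime, avoids them" — is probably the slickest way to finish and sidesteps the element-chasing above.
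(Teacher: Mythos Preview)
Your core argument is correct and is the paper's proof in disguise: writing $\varphi=\frac{g}{a}=\frac{f}{b}$, the paper simply notes $(aJ+bI)\varphi\subseteq M$ and $aJ+bI\in\operatorname{Ht}_{\ge 2}(R)$ (any height-$\le 1$ prime containing it misses $I$ and $J$, hence contains both $a$ and $b$, contradicting $\height_R(a,b)\ge 2$), so $\varphi\in\widetilde{M}$ in one line; your localization at height-$\le 1$ primes unpacks exactly this height computation, and your ``bookkeeping'' worry dissolves once you note that $K\not\subseteq\p$ already forces $f,g\in M_\p$. Your closing $\Ass_R(\widetilde{M}/a\widetilde{M})$ alternative, however, is not safe: $\widetilde{M}$ is not assumed finitely generated at this point (that question occupies Section~3), so the usual finiteness of associated primes is unavailable, and in any case to conclude $b$ is a non-zerodivisor you would need to rule out associated primes of height $\ge 2$ containing $b$, not those of height $\le 1$.
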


\begin{proof}
Let $f \in \widetilde{M}$. Assume that $bf=ag$ for some $g\in \widetilde{M}$. We choose $I,J\in\operatorname{Ht}_{\ge 2}(R)$ such that $If, Jg\subseteq M$. Set $\varphi=\frac{f}{a} = \frac{g}{b}$. Then, since $I(a\varphi)\subseteq M$ and $J(b\varphi)\subseteq M$, we get $(Ia+Jb)\varphi\subseteq M$. Hence $\varphi\in\widetilde{M}$ because $Ia+Jb\in \operatorname{Ht}_{\ge 2}(R)$. This shows $f\in a\widetilde{M}$.
\end{proof}

In the rest of this section, we furthermore assume $R$ is a Noetherian ring.

\begin{prop}\label{smallest}
Let $N$ be an $R$-submodule of $V$ such that $M\subseteq N\subseteq V$. Assume that, for all $a,b\in \mathrm{W}(R)$ with $\height_R(a,b)\ge 2$, the sequence $a,b$ is $N$-regular. Then $\widetilde{M}\subseteq N$.
\end{prop}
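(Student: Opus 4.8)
The plan is to take an arbitrary $f \in \widetilde{M}$ and show $f \in N$. By definition of $\widetilde{M}$, there is an ideal $I \in \operatorname{Ht}_{\ge 2}(R)$ with $If \subseteq M \subseteq N$. Write $f = \frac{m}{a}$ with $a \in \mathrm{W}(R)$ and $m \in M$; then $I$ may be assumed to contain $a$ after replacing $I$ by $I + aR$ (which only increases the ideal, so its height stays $\ge 2$), and in particular $\operatorname{ht}_R I \ge 2$ forces $I$ to contain a non-zerodivisor, namely $a$ already works. Since $R$ is Noetherian and $\operatorname{ht}_R I \ge 2$, the key point is that $I$ contains a two-element $R$-regular sequence $a, b$ \emph{with $\operatorname{ht}_R(a,b) \ge 2$}; indeed, since $a \in I \cap \mathrm{W}(R)$ and $I$ is not contained in any prime minimal over $(a)$ (all such primes have height $1 < 2 \le \operatorname{ht}_R I$), prime avoidance lets us pick $b \in I$ avoiding all minimal primes of $(a)$ and all associated primes of $R$, so that $a, b$ is an $R$-regular sequence and $\operatorname{ht}_R(a,b) \ge 2$.

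Now invoke the hypothesis: the sequence $a, b$ is $N$-regular. The goal is to deduce $f \in N$ from the containments $af \in If \subseteq N$ and $bf \in If \subseteq N$ (note $af = m \in M \subseteq N$, and $bf \in N$ likewise since $b \in I$). We have $f = \frac{af}{a}$ with $af \in N$, so it suffices to show $af \in aN$, i.e. that the numerator lies in $aN$. From $bf \in N$ and $a(bf) = b(af)$ we get $b \cdot (af) \in aN$, and since $a, b$ is $N$-regular — in particular $b$ is a non-zerodivisor on $N/aN$ — this yields $af \in aN$. Hence $f = \frac{af}{a} \in N$, as desired.

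The main obstacle is the extraction of the regular sequence $a, b \subseteq I$ with $\operatorname{ht}_R(a,b) \ge 2$ meeting the precise hypotheses of the proposition; this is where Noetherianness is genuinely used, via prime avoidance over the (finite) sets $\operatorname{Ass}(R)$ and $\operatorname{Min}(R/aR)$, together with the elementary fact that an ideal of height $\ge 2$ cannot be covered by finitely many primes of height $\le 1$. The rest is a one-line regular-sequence cancellation argument. One should also double-check the reduction step "replace $I$ by $I + aR$": this is harmless because $\operatorname{ht}_R(I + aR) \ge \operatorname{ht}_R I \ge 2$ and $(I+aR)f \subseteq If + Rf \cdot a \subseteq M + m R \subseteq M$ since $af = m \in M$, so the enlarged ideal still witnesses $f \in \widetilde{M}$.
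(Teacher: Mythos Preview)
Your proof is correct and follows essentially the same approach as the paper's: write $f=\frac{m}{a}$, use prime avoidance (over $\Min_R R/aR$ and $\Ass R$) to find $b\in I\cap\mathrm{W}(R)$ with $\height_R(a,b)\ge 2$, then use $N$-regularity of $a,b$ to cancel. The paper takes $I=M:_Rf$ directly rather than enlarging an arbitrary witness to $I+aR$, but this is cosmetic. One inconsequential slip: the side remark that ``$a,b$ is an $R$-regular sequence'' need not hold, since you avoided only $\Min_R R/aR$, not $\Ass_R R/aR$; however you never use $R$-regularity, only the hypothesized $N$-regularity, so the argument stands.
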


\begin{proof}
Let $f\in \widetilde{M}$ and write $f=\dfrac{x}{a}$ where $x\in M$ and $a\in \mathrm{W}(R)$. Consider the ideal $I=M:_Rf$. We then have $a\in I$ and $I\in \operatorname{Ht}_{\ge 2}(R)$. Since 
$$
I\not\subseteq \bigcup_{\p\in \Min_RR/aR}\p \ \cup \bigcup_{\p\in \Ass R}\p,
$$
we choose $b\in \mathrm{W}(R)\cap I$ satisfying $\height_R(a,b)\ge 2$. The sequence $a,b$ is $N$-regular. As $bf\in M\subseteq N$, we can write $bx=ay$ for some $y\in N$. Hence $x\in aN:_Nb = aN$. This implies $f\in N$.
\end{proof}

Hence we have the following. 

\begin{cor}\label{tt}
The following two conditions are equivalent:
\begin{enumerate}
\item[$(1)$] $M=\widetilde{M}$.
\item[$(2)$] For every $a,b\in \mathrm{W}(R)$ with $\height_R(a,b)\ge 2$, the sequence $a,b$ is $M$-regular.
\end{enumerate}
In particular, the equality $\widetilde{M}=\widetilde{\widetilde{M}}$ holds.
\end{cor}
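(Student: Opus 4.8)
The plan is to read off both the equivalence and the idempotency statement directly from Propositions \ref{basic} and \ref{smallest}, which together characterize $\widetilde{M}$ as the smallest $R$-submodule $N$ of $V$ with $M\subseteq N$ on which every pair $a,b\in\mathrm{W}(R)$ of height $\ge 2$ forms a regular sequence. So the argument is essentially bookkeeping.

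For $(1)\Rightarrow(2)$ I would just invoke Proposition \ref{basic}: it asserts precisely that, for $a,b\in\mathrm{W}(R)$ with $\height_R(a,b)\ge 2$, the sequence $a,b$ is $\widetilde{M}$-regular, and under the hypothesis $M=\widetilde{M}$ this is condition $(2)$. For $(2)\Rightarrow(1)$, the inclusion $M\subseteq\widetilde{M}$ is automatic from the definition of $\widetilde{M}$ (take $I=R\in\operatorname{Ht}_{\ge 2}(R)$), and for the reverse inclusion I would apply Proposition \ref{smallest} with $N=M$: hypothesis $(2)$ is exactly the regularity assumption required there, so the conclusion is $\widetilde{M}\subseteq M$, whence $M=\widetilde{M}$.

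For the final assertion $\widetilde{M}=\widetilde{\widetilde{M}}$, the point is that the whole construction can be applied with $\widetilde{M}$ in place of $M$. One first checks that $\widetilde{M}$ is again a torsion-free $R$-module, being an $R$-submodule of $V$, and that $\mathrm{Q}(R)\otimes_R\widetilde{M}=V$; indeed $M\subseteq\widetilde{M}\subseteq V$ forces $V=\mathrm{Q}(R)\cdot M\subseteq\mathrm{Q}(R)\cdot\widetilde{M}\subseteq V$, so the ambient module used to form $\widetilde{\widetilde{M}}$ is unchanged. Hence condition $(2)$ for the module $\widetilde{M}$ — namely that every height-$\ge 2$ pair $a,b$ is $\widetilde{M}$-regular — is nothing but Proposition \ref{basic}, and the already-established implication $(2)\Rightarrow(1)$, applied to $\widetilde{M}$, gives $\widetilde{M}=\widetilde{\widetilde{M}}$.

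I do not expect a genuine obstacle here, since all the substance has been placed in Propositions \ref{basic} and \ref{smallest}. The only step that deserves a line of justification is the reapplication of the machinery to $\widetilde{M}$: one must note that passing from $M$ to $\widetilde{M}$ preserves torsion-freeness and leaves the ambient localization $V$ intact, which is immediate from $M\subseteq\widetilde{M}\subseteq V$.
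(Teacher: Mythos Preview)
Your proposal is correct and matches the paper's approach exactly: the paper presents this corollary with no explicit proof, simply writing ``Hence we have the following'' after Propositions \ref{basic} and \ref{smallest}, which is precisely the bookkeeping you carry out. Your extra justification that $\widetilde{M}$ is torsion-free with the same ambient module $V$ is a reasonable clarification, but the paper evidently regards this as self-evident.
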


Let $n > 0$ be an integer and $N$ a nonzero finitely generated $R$-module. We say that $N$ satisfies {\it the condition $(S_n)$ of Serre}, if 
$
\depth_{R_\p}N_\p\ge\min\{n, \dim R_\p\}
$
for every $\p\in \Supp_RN$.
The lemma below follows from the induction on $n$.

\begin{lem}\label{sequence}
Let $n > 0$ be an integer and $N$ a nonzero finitely generated $R$-module. 
Then the following conditions are equivalent.
\begin{enumerate}
\item[$(1)$] $N$ satisfies $(S_n)$.
\item[$(2)$] Every sequence $a_1, a_2, \dots , a_n\in R$ satisfying $\height_R(a_1, a_2, \dots , a_i)\ge i$ for all $1\le i\le n$ is $N$-regular. 
\end{enumerate}
If $R$ satisfies $(S_1)$, then one can add the following.
\begin{enumerate}
\item[$(3)$] Every sequence $a_1, a_2, \dots, a_n\in \mathrm{W}(R)$ satisfying $\height_R(a_1, a_2, \dots , a_i)\ge i$ for all $1\le i\le n$ is $N$-regular. 
\end{enumerate}
\end{lem}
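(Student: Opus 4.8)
The plan is to prove Lemma \ref{sequence} by induction on $n$, using the characterization of depth in terms of regular sequences together with the fact that prime avoidance lets one choose regular sequences with prescribed height conditions.

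\medskip

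\textbf{Equivalence of (1) and (2).} First I would treat the base case $n=1$: $N$ satisfies $(S_1)$ iff $\depth_{R_\p}N_\p\ge 1$ for every $\p\in\Supp_RN$ with $\dim R_\p\ge 1$, which is equivalent to saying that no element of $\bigcup_{\p\in\Ass_RN}\p$ has height $\ge 1$; that is, every $a\in R$ with $\height_R(a)\ge 1$ avoids all associated primes of $N$, hence is $N$-regular. For the inductive step, assume the equivalence holds for $n-1$. Given (1), let $a_1,\dots,a_n$ satisfy the height hypotheses. Since $\height_R(a_1)\ge 1$ and $N$ satisfies $(S_1)$, $a_1$ is $N$-regular. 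Then $N/a_1N$ satisfies $(S_{n-1})$: for $\p\in\Supp_R(N/a_1N)$ one has $\depth_{R_\p}(N/a_1N)_\p = \depth_{R_\p}N_\p - 1$ when $a_1\in\p$ (and $\depth$ can only be larger otherwise), while $\dim R_\p$ drops appropriately because $a_1$ is part of a system of parameters locally; the point is $\depth_{R_\p}(N/a_1N)_\p\ge\min\{n-1,\dim(R/a_1R)_\p\}$, which is a standard computation. Passing to $\overline{R}=R/a_1R$ and the sequence $\overline{a_2},\dots,\overline{a_n}$, whose height conditions are inherited since $\height_{R/a_1R}(\overline{a_2},\dots,\overline{a_i})\ge i-1$ follows from $\height_R(a_1,\dots,a_i)\ge i$, the induction hypothesis gives that $\overline{a_2},\dots,\overline{a_n}$ is $(N/a_1N)$-regular, hence $a_1,\dots,a_n$ is $N$-regular. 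For the converse (2)$\Rightarrow$(1), let $\p\in\Supp_RN$ with $d=\dim R_\p$; setting $n'=\min\{n,d\}$, I would use prime avoidance to construct, inside $\p$, a sequence $a_1,\dots,a_{n'}$ with $\height_R(a_1,\dots,a_i)\ge i$ for each $i$ (possible because at each stage the already-constructed ideal together with its colon conditions does not cover $\p$, as $\height_R\p=d\ge n'$ exceeds the heights of the minimal primes over $(a_1,\dots,a_{i-1})$). By (2) this sequence is $N$-regular, and remains so after localizing at $\p$, so $\depth_{R_\p}N_\p\ge n'=\min\{n,\dim R_\p\}$.

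\medskip

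\textbf{Adding (3) when $R$ satisfies $(S_1)$.} Clearly (2)$\Rightarrow$(3) trivially. For (3)$\Rightarrow$(2), suppose $a_1,\dots,a_n$ satisfies the height conditions but is only assumed to lie in $R$; I would argue that each $a_i$ automatically lies in $\mathrm{W}(R)$. Indeed $R$ satisfying $(S_1)$ means $\Ass R$ consists of minimal primes, all of height $0$; since $\height_R(a_1,\dots,a_i)\ge i\ge 1$, the ideal $(a_1,\dots,a_i)$ is contained in no height-$0$ prime, but to conclude $a_1\in\mathrm{W}(R)$ I instead use $\height_R(a_1)\ge 1$ directly, so $a_1\notin\p$ for all $\p\in\Ass R$, i.e.\ $a_1\in\mathrm{W}(R)$; more care is needed for $a_2,\dots$ since only the heights of the partial ideals, not of the individual $a_i$, are controlled. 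Here I would instead replace the sequence: given $a_1,\dots,a_n$ as in (2), use prime avoidance to perturb to $a_i'=a_i+c$ for a suitable $c\in(a_1,\dots,a_{i-1})$ so that $a_i'\in\mathrm{W}(R)$ while preserving all height conditions and the ideals $(a_1,\dots,a_i)=(a_1,\dots,a_{i-1},a_i')$, then apply (3) and note that $N$-regularity of the perturbed sequence is equivalent to that of the original one (they generate the same ideals stepwise). This reduction is the technical heart of the "$(3)$" clause.

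\medskip

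\textbf{Main obstacle.} The routine-but-delicate part is the two prime-avoidance constructions: producing a sequence with prescribed stepwise height bounds inside a given prime of large height (for (2)$\Rightarrow$(1)), and perturbing a given such sequence to one consisting of non-zerodivisors without disturbing the height bounds (for (3)$\Rightarrow$(2)). Both rest on the observation that in a Noetherian ring an ideal of height $\ge i$ is not contained in the union of the minimal primes of any ideal of height $<i$ together with finitely many height-$0$ primes; making the bookkeeping precise at each inductive stage, and checking that regularity of $N$ over the original and perturbed sequences coincide, is where the real work lies, though none of it is conceptually hard. The depth computation $\depth_{R_\p}(N/a_1N)_\p\ge\min\{n-1,\dim(R/a_1R)_\p\}$ in the inductive step is standard and I would cite it rather than belabor it.
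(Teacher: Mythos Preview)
Your proposal is correct and follows exactly the approach the paper indicates: the paper's entire proof is the single sentence ``The lemma below follows from the induction on $n$,'' and you have supplied a valid induction on $n$ with the requisite prime-avoidance constructions. The perturbation argument you give for $(3)\Rightarrow(2)$ (replacing $a_i$ by $a_i+c$ with $c\in(a_1,\dots,a_{i-1})$ via coset prime avoidance, using that $a_1\in\rmW(R)$ forces $(a_1,\dots,a_{i-1})\not\subseteq\q$ for every $\q\in\Ass R$) is sound, as is your observation that the perturbed and original sequences generate the same ideals stepwise and hence have the same $N$-regularity.
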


To sum up this kind of arguments, we get the following.

\begin{thm}\label{S2}
Suppose that $R$ satisfies $(S_1)$ and $\widetilde{M}$ is finitely generated as an $R$-module. Then $\widetilde{M}$ is the smallest $R$-submodule of $V$ which contains $M$ and satisfies $(S_2)$.
\end{thm}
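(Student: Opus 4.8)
The plan is to deduce Theorem~\ref{S2} directly from Corollary~\ref{tt} together with Lemma~\ref{sequence}, so that the bulk of the work is already in place. Concretely, I would prove the two halves of the statement separately: first that $\widetilde{M}$ itself satisfies $(S_2)$, and second that any $R$-submodule $N$ of $V$ with $M\subseteq N$ satisfying $(S_2)$ must contain $\widetilde{M}$.

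\medskip

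For the first half, I would apply Corollary~\ref{tt} to the module $\widetilde{M}$ in place of $M$: since $\widetilde{\widetilde{M}}=\widetilde{M}$, part~$(2)$ of that corollary gives that for every $a,b\in\mathrm{W}(R)$ with $\height_R(a,b)\ge 2$, the sequence $a,b$ is $\widetilde{M}$-regular. Now I would invoke Lemma~\ref{sequence} with $n=2$, in the implication $(3)\Rightarrow(1)$ — this is legitimate because $R$ satisfies $(S_1)$ by hypothesis, and because $\widetilde{M}$ is a nonzero finitely generated $R$-module (finite generation is assumed; nonzero because it contains $M$, and $M$ may be taken nonzero, the zero case being trivial). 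One small point to check: condition~$(3)$ of Lemma~\ref{sequence} quantifies over \emph{all} two-element sequences $a_1,a_2\in\mathrm{W}(R)$ with $\height_R(a_1,a_2)\ge 2$, which is exactly what Corollary~\ref{tt}$(2)$ supplies for $\widetilde{M}$; so the hypotheses match and we conclude $\widetilde{M}$ satisfies $(S_2)$.

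\medskip

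For the second half — minimality — suppose $N$ is an $R$-submodule of $V$ with $M\subseteq N\subseteq V$ and $N$ satisfies $(S_2)$. Here I should be slightly careful: $(S_2)$ as defined applies to finitely generated modules, whereas $N$ need not a priori be finitely generated. The clean route is to observe that it suffices to show $a,b$ is $N$-regular for all $a,b\in\mathrm{W}(R)$ with $\height_R(a,b)\ge 2$, and then apply Proposition~\ref{smallest} to conclude $\widetilde{M}\subseteq N$. To verify the regularity hypothesis, given such $a,b$, I would first note $a\in\mathrm{W}(R)\subseteq\mathrm{W}(N)$ since $N\subseteq V=\mathrm{Q}(R)\otimes_R M$ is torsion-free over $R$; then for the second step I would reduce to the finitely generated case by taking, for any finitely many elements witnessing a potential zerodivisor relation $b\xi\in aN$, a finitely generated submodule $N'$ with $M\subseteq N'\subseteq N$ containing them — but this needs $N'$ to still satisfy $(S_2)$, which is not automatic. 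The honest fix, and the step I expect to be the main obstacle, is therefore to argue minimality via the $N$-regularity condition intrinsically: one shows that $(S_2)$ for $N$ (interpreted as: $\depth_{R_\p}N_\p\ge\min\{2,\dim R_\p\}$ at all relevant primes, which makes sense for any module, finitely generated or not) implies condition~$(2)$ of Lemma~\ref{sequence}'s proof-scheme localized at each $\p$, hence $a,b$ is $N$-regular. Provided the paper's standing conventions let us read $(S_2)$ for $N$ this way (or provided $N$ is tacitly finitely generated in the intended application, which is the case when $\widetilde M$ is finite over $R$ and $N$ sits between), this is routine; otherwise one inserts the remark that any such intermediate $N$ satisfying $(S_2)$ and contained in $\widetilde M$ — which is finite — is itself finite, closing the gap. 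Then Proposition~\ref{smallest} finishes the proof.
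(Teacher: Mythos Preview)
Your approach is correct and is precisely the argument the paper has in mind: the paper gives no explicit proof, writing only ``To sum up this kind of arguments, we get the following,'' so the theorem is meant to fall out of Proposition~\ref{basic} (or equivalently Corollary~\ref{tt}) together with Lemma~\ref{sequence} and Proposition~\ref{smallest}, exactly as you outline.

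Your only unnecessary detour is the worry about finite generation of $N$ in the minimality step. In the paper, the condition $(S_2)$ is \emph{defined} only for nonzero finitely generated $R$-modules (see the sentence immediately preceding Lemma~\ref{sequence}), so any $N$ ``satisfying $(S_2)$'' is by convention finitely generated, and Lemma~\ref{sequence} $(1)\Rightarrow(3)$ applies directly to give that every sequence $a,b\in\mathrm{W}(R)$ with $\height_R(a,b)\ge 2$ is $N$-regular; then Proposition~\ref{smallest} finishes. Your proposed fallback --- arguing that $N$ is finitely generated because it is ``contained in $\widetilde{M}$'' --- is circular: the containment $N\subseteq\widetilde{M}$ is not known a priori, and indeed $\widetilde{M}\subseteq N$ is the inclusion you are proving. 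Simply delete that paragraph and the proof is clean.
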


Notice that, even if $M$ is a finitely generated $R$-module, $\widetilde{M}$ is not necessarily finitely generated as an $R$-module. 
Theorem \ref{S2} leads to the problem of when the $R$-module $\widetilde{M}$ is finitely generated as an $R$-module, which we shall discuss in Section 3.

\begin{Example}
Let $T=k[[X,Y,Z]]$ the formal power series ring over a field $k$. Consider the rings $R=T/(X)\cap (Y,Z)$, $A=T/(X)$, and $B=T/(Y,Z)$. We identify $\rmQ(R)=\rmQ(A)\times\rmQ(B)$. Then $\widetilde{R}=A\times \rmQ(B)$. Hence $\widetilde{R}$ is not finitely generated as an $R$-module.
\end{Example}

\begin{proof}
Let $\varphi\in \widetilde{R}$ and write $\varphi=(\xi, \eta )$ for some $\xi \in \rmQ(A)$ and $\eta \in \rmQ(B)$. We then choose $I\in\operatorname{Ht}_{\ge 2}(R)$ such that $I\varphi\subseteq R$. As $R\subseteq \overline{R}=A\times B$, we obtain $I\xi\subseteq A$. We may assume $I\neq R$. Choose a system $a,b$ of parameters for $R$ with $(a,b)\subseteq I$. The sequence $a,b$ forms a system of parameters for $A$. Since $(a,b)\xi\subseteq A$ and the sequence $a,b$ is $A$-regular, we get $a\xi\in aA:_Ab = aA$. 
Hence $\xi\in A$ and $\widetilde{R}\subseteq  A\times \rmQ(B)$.

Conversely, let $\varphi\in A\times \rmQ(B)$ and write $\varphi=(\xi, \eta )$ with $\xi \in A$ and $\eta \in \rmQ(B)$. By setting $x$ the image of $X$ in $B$, we have $B= k[[x]]$, whence we can write $\eta =\frac{\beta}{x^{n}}$ for some $n\ge 0$ and $\beta\in B$. We consider $a=X^{n+1}+Y$, $b=X^{n+1}+Z$ and $I=(a,b)R$. Notice that  $\height_RI=2$, $IA=(Y,Z)A$, and $IB=x^{n+1}B$. This implies $I\xi=(Y,Z)\xi$ and $I\eta=x^{n+1}B\frac{\beta}{x^{n}}=X\beta B$. For every $i\in I$, we see that 
\begin{center}
$i\xi =\overline{YF_1+ZF_2}$ \  in  \ $A$ \ \ and \ \  $i\eta =\overline{XF_3}$ \ in \ $B$
\end{center}
 for some $F_1, F_2, F_3\in T$, where $\overline{(*)}$ denotes the image of $*$ in the appropriate rings. Then
$$
i\varphi=(i\xi, i\eta)=\Big(\overline{YF_1+ZF_2+XF_3},~\overline{YF_1+ZF_2+XF_3}\Big).
$$
This yields $I\varphi\subseteq R$, whence $\varphi\in\widetilde{R}$. Therefore $\widetilde{R}=A\times \rmQ(B)$, as desired. 
\end{proof}

\medskip

The smallest module-finite birational extension of $R$ satisfying the condition $(S_2)$ of Serre is called {\it the $(S_2)$-ification} of $R$. 
We apply Theorem \ref{S2} to get the following.

\begin{cor}\label{S2ring}
Suppose that $R$ satisfies $(S_1)$ and $\widetilde{R}$ is a finitely generated  $R$-module. Then $\widetilde{R}$ is the $(S_2)$-ification of $R$.
\end{cor}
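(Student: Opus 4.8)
The plan is to deduce the statement from Theorem \ref{S2} applied with $M=R$ (so that $V=\rmQ(R)$), together with the translation between the module-theoretic form of Serre's condition $(S_2)$ used there and the ring-theoretic one implicit in the notion of $(S_2)$-ification. The existence half is easy: as recorded right after the definition of $\widetilde{M}$, the $R$-submodule $\widetilde{R}\subseteq\rmQ(R)$ is a subring containing $R$, and it is module-finite over $R$ by hypothesis, so $R\subseteq\widetilde{R}\subseteq\rmQ(R)$ displays $\widetilde{R}$ as a module-finite birational extension of $R$. By Theorem \ref{S2} it satisfies $(S_2)$ as an $R$-module, and this forces $(S_2)$ as a ring: if $\q\in\Spec\widetilde{R}$ lies over $\p\in\Spec R$, then $\depth_{R_\p}\widetilde{R}_\p\le\depth_{\widetilde{R}_\q}\widetilde{R}_\q$ while $\min\{2,\dim\widetilde{R}_\q\}\le\min\{2,\dim R_\p\}\le\depth_{R_\p}\widetilde{R}_\p$. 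Hence $\widetilde{R}$ is a module-finite birational extension of $R$ satisfying $(S_2)$.

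For minimality, let $S$ be an arbitrary module-finite birational extension of $R$ satisfying $(S_2)$, so $R\subseteq S\subseteq\rmQ(R)$; the goal is to prove $\widetilde{R}\subseteq S$. If $S$ were known to satisfy $(S_2)$ \emph{as an $R$-module}, this would be immediate from Theorem \ref{S2}, or concretely from Proposition \ref{smallest} via Lemma \ref{sequence}, using that $S$ satisfies $(S_1)$ because $\Ass_R S\subseteq\Ass_R\rmQ(R)=\Min R$ and that $\rmW(R)\subseteq\rmW(S)$ since every element of $\rmW(R)$ is a unit of $\rmQ(R)$. The finiteness of $\widetilde{R}$ is needed precisely at this point: in general a module-finite birational extension which satisfies $(S_2)$ as a ring need not satisfy it as an $R$-module --- the normalization of a reduced, non-equidimensional ring is normal, hence $(S_2)$ as a ring, yet typically not $(S_2)$ as a module over the ring, and in such a situation the tilde construction and the $(S_2)$-ification genuinely differ and $\widetilde{R}$ fails to be finitely generated.

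Thus the main obstacle is to show that, under the hypothesis that $\widetilde{R}$ is a finitely generated $R$-module, every module-finite birational extension $S$ of $R$ with $(S_2)$ is forced to satisfy $(S_2)$ as an $R$-module --- equivalently, that $\widetilde{R}\subseteq S$. This should be extracted from the finiteness criteria for $\widetilde{R}$ established in Section 3, which is the same circle of ideas that, in Theorem \ref{main}, makes local quasi-unmixedness a sufficient hypothesis. Once this is in hand, $\widetilde{R}$ is contained in every module-finite birational extension of $R$ satisfying $(S_2)$, and together with the first paragraph this shows $\widetilde{R}$ is the $(S_2)$-ification.
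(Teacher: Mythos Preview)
Your first paragraph is correct: $\widetilde{R}$ is a module-finite birational extension of $R$, Theorem~\ref{S2} (with $M=R$) gives $(S_2)$ as an $R$-module, and your height/depth inequalities correctly promote this to $(S_2)$ as a ring.

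The gap is in your treatment of minimality. You rightly isolate the issue: a module-finite birational extension $S$ satisfying $(S_2)$ as a ring need not, a priori, satisfy $(S_2)$ as an $R$-module, and your normalization-of-a-non-equidimensional-ring heuristic is exactly the phenomenon in the paper's own Example (the ring $T/(X)\cap(Y,Z)$). But your final paragraph does not close this gap. Saying the missing step ``should be extracted from the finiteness criteria for $\widetilde{R}$ established in Section~3'' is not an argument: the results there establish finiteness of $\widetilde{M}$ under the \emph{extra} hypothesis that $R$ is a homomorphic image of a Gorenstein ring, and they say nothing about an arbitrary competitor $S$. Nothing in Section~3 yields, from the bare hypothesis that $\widetilde{R}$ is finitely generated, the implication ``$S$ is $(S_2)$ as a ring $\Rightarrow$ $\widetilde{R}\subseteq S$'' that you need. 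As written, your proposal proves existence but leaves minimality unfinished.

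For comparison, the paper does not route the argument through Section~3 at all. It treats Corollary~\ref{S2ring} as the case $M=R$ of Theorem~\ref{S2}, together with the remark recorded immediately after the corollary that for a Noetherian module-finite birational extension $S$ of $R$ the two meanings of $(S_2)$---as a ring and as an $R$-module---coincide. That remark is the bridge you are searching for; the paper simply asserts it and applies Theorem~\ref{S2} directly, rather than deferring to later sections.
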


Let $S$ be a Noetherian ring which is a birational extension of $R$.
Note that, provided $S$ is a finitely generated $R$-module, $S$ satisfies $(S_2)$ as a ring (i.e., $S$ satisfies $(S_2)$ as an $S$-module) if and only if it satisfies $(S_2)$ as an $R$-module. 
Although $\widetilde{R}$ is not necessarily finitely generated as an $R$-module, the ring $\widetilde{R}$ is integral over $R$, provided $R$ is locally quasi-unmixed (see \cite{M}). 

\begin{prop}\label{as a ring}
Suppose that $R$ satisfies $(S_1)$ and $\widetilde{R}$ is a Noetherian ring which is integral over $R$. Assume that $\widetilde{M}$ is a finitely generated $\widetilde{R}$-module. Then $\widetilde{M}$ satisfies $(S_2)$ as an $\widetilde{R}$-module. In particular, $\widetilde{R}$ satisfies $(S_2)$ as a ring.
\end{prop}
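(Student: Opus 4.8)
The plan is to verify the Serre inequality
$$
\depth_{\widetilde{R}_{\mathfrak{P}}}\widetilde{M}_{\mathfrak{P}}\ \ge\ \min\bigl\{2,\ \dim\widetilde{R}_{\mathfrak{P}}\bigr\}
$$
directly, for every $\mathfrak{P}\in\Supp_{\widetilde{R}}\widetilde{M}$, treating the cases $\dim\widetilde{R}_{\mathfrak{P}}\le 1$ and $\dim\widetilde{R}_{\mathfrak{P}}\ge 2$ separately. The final assertion is then immediate on taking $M=R$, since $\widetilde{R}$ is automatically a finitely generated $\widetilde{R}$-module; in particular $\widetilde{R}$ need not be module-finite over $R$ for that conclusion.

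For the case $\dim\widetilde{R}_{\mathfrak{P}}\le 1$ it suffices to show that $\widetilde{M}$ has no embedded associated primes over $\widetilde{R}$, i.e. $\Ass_{\widetilde{R}}\widetilde{M}\subseteq\Min\widetilde{R}$. Since $\widetilde{M}\subseteq V=\rmQ(R)\otimes_R M$ (an inclusion of $\widetilde{R}$-modules), we have $\Ass_{\widetilde{R}}\widetilde{M}\subseteq\Ass_{\widetilde{R}}V$; and every non-zerodivisor of $R$ acts invertibly on $V$, so any $\mathfrak{P}\in\Ass_{\widetilde{R}}V$ satisfies $\mathfrak{P}\cap\mathrm{W}(R)=\emptyset$, whence $\mathfrak{P}\cap R\subseteq\bigcup_{\mathfrak{q}\in\Ass R}\mathfrak{q}$. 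As $R$ satisfies $(S_1)$ we have $\Ass R=\Min R$, so by prime avoidance $\mathfrak{P}\cap R$ equals some $\mathfrak{q}\in\Min R$; since $\widetilde{R}$ is integral over $R$, incomparability of primes then forces $\mathfrak{P}$ to be minimal in $\widetilde{R}$. Consequently no $\mathfrak{P}$ with $\dim\widetilde{R}_{\mathfrak{P}}\ge 1$ can be associated to $\widetilde{M}$, which settles this case (and shows $\widetilde{M}$ satisfies $(S_1)$ over $\widetilde{R}$).

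For $\mathfrak{P}\in\Supp_{\widetilde{R}}\widetilde{M}$ with $\dim\widetilde{R}_{\mathfrak{P}}\ge 2$, put $\mathfrak{p}=\mathfrak{P}\cap R$. Integrality of $\widetilde{R}$ over $R$ (again via incomparability of contracted chains) gives $\height_R\mathfrak{p}\ge\height_{\widetilde{R}}\mathfrak{P}\ge 2$, and then, exactly as in the proof of Proposition \ref{smallest} (using $\Ass R=\Min R$ and prime avoidance), one finds $a,b\in\mathrm{W}(R)\cap\mathfrak{p}$ with $\height_R(a,b)\ge 2$. By Proposition \ref{basic} the sequence $a,b$ is $\widetilde{M}$-regular; since localization is exact, $a,b$ stays regular on $\widetilde{M}_{\mathfrak{P}}$, and because $a,b\in\mathfrak{p}\widetilde{R}_{\mathfrak{P}}$ and $\widetilde{M}_{\mathfrak{P}}$ is a nonzero finitely generated module over the Noetherian local ring $\widetilde{R}_{\mathfrak{P}}$, Nakayama's lemma guarantees the successive quotients do not vanish. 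Hence $\depth_{\widetilde{R}_{\mathfrak{P}}}\widetilde{M}_{\mathfrak{P}}\ge 2$, and the inequality holds at every $\mathfrak{P}$; thus $\widetilde{M}$ satisfies $(S_2)$ over $\widetilde{R}$, and, applying this to $M=R$, the ring $\widetilde{R}$ satisfies $(S_2)$.

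The point requiring care is conceptual rather than computational: Proposition \ref{basic} only yields a form of $(S_2)$ tested against height-$\ge 2$ sequences taken from $\mathrm{W}(R)$, whereas the statement asserts a genuine Serre condition over the (possibly strictly larger, possibly non-module-finite) ring $\widetilde{R}$. The hypotheses are exactly what bridges the gap: integrality of $\widetilde{R}$ over $R$ makes heights and associated primes descend to $R$, where Proposition \ref{basic} and the $(S_1)$ hypothesis on $R$ apply; and finite generation of $\widetilde{M}$ over $\widetilde{R}$ — which can genuinely fail over $R$, so that Theorem \ref{S2} is not directly available — is what licenses Nakayama's lemma in the local depth computation. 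I expect the verification that $\Ass_{\widetilde{R}}\widetilde{M}$ consists only of minimal primes, via the passage through the ambient module $V$, to be the most delicate bookkeeping step.
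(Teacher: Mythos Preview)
Your proof is correct and follows essentially the same approach as the paper: both use integrality of $\widetilde{R}$ over $R$ to transfer heights down to $R$, invoke Proposition~\ref{basic} to produce an $\widetilde{M}$-regular sequence $a,b$ inside any $\mathfrak{P}$ of height~$\ge 2$, and handle the height-one case via the fact that elements of $\mathrm{W}(R)$ are non-zerodivisors on $\widetilde{M}\subseteq V$. The only cosmetic difference is organizational---the paper assumes $\depth_{\widetilde{R}_P}\widetilde{M}_P<2$ and shows $\widetilde{M}_P$ is maximal Cohen--Macaulay, whereas you case-split on $\dim\widetilde{R}_{\mathfrak{P}}$ and phrase the low-height case as $\Ass_{\widetilde{R}}\widetilde{M}\subseteq\Min\widetilde{R}$---but the content is the same.
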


\begin{proof}
Let $P\in\Spec \widetilde{R}$ such that $\depth_{\widetilde{R}_P} \widetilde{M}_P<2$. 
We will show that $\widetilde{M}_P$ is a maximal Cohen-Macaulay $\widetilde{R}_P$-module. To do this, we may assume $\height_{\widetilde{R}}\, P\ge 1$. Set $\p=P\cap R$. 
Suppose $\height_{\widetilde{R}}\, P\ge 2$. Then $\height_{R}\,\p\ge 2$, so there exist $a,b\in \p\cap\mathrm{W}(R)$ such that $\height_R\,(a,b)\ge 2$. By Proposition \ref{basic}, the sequence $a,b$ is $\widetilde{M}$-regular. This contradicts $\depth_{\widetilde{R}_P}\widetilde{M}_P<2$. 
Therefore $\height_{\widetilde{R}}\,P=1$, so that $\height_{R}\,\p=1$. Hence we can choose $a\in\p\cap \rmW(R)$. Since $a$ is $\widetilde{M}$-regular, we get $\depth_{\widetilde{R}_P} \widetilde{M}_P=1$. Therefore $\widetilde{M}_P$ is a maximal Cohen-Macaulay $\widetilde{R}_P$-module. 
\end{proof}

In addition, we assume $M$ is finitely generated as an $R$-module. For $a \in \rmW(R)$, we set
$$
\Min^1_RM/aM=\{\p\in \Min_RM/aM\mid \height_R\p=1\}.
$$
Then $\Min^1_RM/aM=\Min_RM/aM=\Min_RR/aR$, whenever $(0):_RM=(0)$. 

\medskip

A Noetherian local ring $(A, \fkm)$ is said to be {\it quasi-unmixed}, if all the minimal prime ideals of the $\fkm$-adic completion $\widehat{A}$ of $A$ have the same codimension. A Noetherian ring $S$ is said to be {\it locally quasi-unmixed}, if $S_P$ is quasi-unmixed for every $P\in\Spec S$.

We note the following. 

\begin{lem}
Suppose that $R$ is locally quasi-unmixed and  satisfies $(S_1)$. Then $\dim_{R_\p}M_\p= \dim R_\p$ for all $\p \in \Supp_R M$. Consequently, $
\Min^1_RM/aM=\Min_R M/aM
$,
if $a \in \rmW(R)$.
\end{lem}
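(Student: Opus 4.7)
The plan is to establish $\dim_{R_\p} M_\p = \dim R_\p$ by combining torsion-freeness with $(S_1)$ to control $\Ass_R M$, then invoking quasi-unmixedness to control the codimensions of the minimal primes of $R_\p$, and finally to deduce the statement about $\Min_R^1 M/aM$ from Krull's principal ideal theorem applied to the module $M_\p$.

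First I would observe that, because $M$ is torsion-free, every zerodivisor on $M$ is a zerodivisor on $R$. Hence each $\q \in \Ass_R M$ is contained in $\bigcup_{\r \in \Ass R} \r$, and by prime avoidance $\q \subseteq \r$ for some $\r \in \Ass R$. The hypothesis $(S_1)$ gives $\Ass R = \Min R$, so $\r$ is minimal in $R$, and since $\q$ is a prime contained in a minimal prime, $\q = \r$. Therefore $\Ass_R M \subseteq \Min R$, and after localizing at $\p \in \Supp_R M$ one obtains $\Ass_{R_\p} M_\p \subseteq \Min R_\p$.

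Next I would exploit the locally quasi-unmixed hypothesis. This is exactly the condition needed to guarantee that each $R_\p$ is equidimensional, i.e., $\dim R_\p/\q = \dim R_\p$ for every $\q \in \Min R_\p$; this is a standard consequence of the fact that all minimal primes of $\widehat{R_\p}$ share the common dimension $\dim R_\p$, transported through the faithfully flat map $R_\p \to \widehat{R_\p}$ (cf. \cite{M}). Since $M$ is finitely generated, $\dim_{R_\p} M_\p = \max\{\dim R_\p/\q \mid \q \in \Min \Supp_{R_\p} M_\p\}$, and $\Min \Supp_{R_\p} M_\p \subseteq \Ass_{R_\p} M_\p \subseteq \Min R_\p$ by the previous step. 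Each term in the maximum thus equals $\dim R_\p$, yielding $\dim_{R_\p} M_\p = \dim R_\p$.

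For the consequence, let $a \in \rmW(R)$ and take any $\p \in \Min_R M/aM$. Localizing, $\p R_\p$ is minimal in $\Supp_{R_\p}(M_\p/aM_\p)$, so $M_\p/aM_\p$ has Krull dimension zero. Since $M$ is torsion-free and $a \in \rmW(R)$, the element $a$ is $M_\p$-regular, so Krull's principal ideal theorem (for modules) gives $\dim M_\p = \dim M_\p/aM_\p + 1 = 1$. By the first part of the lemma, $\dim R_\p = 1$, i.e., $\height_R \p = 1$. This proves $\Min_R M/aM \subseteq \Min^1_R M/aM$, and the reverse inclusion is immediate from the definition. The only point requiring outside input is the standard implication "locally quasi-unmixed $\Rightarrow$ each $R_\p$ equidimensional," which will be cited from \cite{M}; everything else is a direct manipulation with associated primes, support, and module dimension.
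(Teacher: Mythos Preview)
Your argument is correct and follows essentially the same route as the paper: both use torsion-freeness together with $(S_1)$ to force $\Ass_R M\subseteq\Min R$, and then invoke quasi-unmixedness of $R_\p$ to obtain equidimensionality, yielding $\dim_{R_\p}M_\p=\dim R_\p$. The paper leaves the ``Consequently'' clause to the reader, whereas you spell it out via the dimension drop $\dim M_\p/aM_\p=\dim M_\p-1$ for the $M_\p$-regular element $a$; this is fine, though it is really the standard parameter inequality for module dimension rather than Krull's principal ideal theorem proper.
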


\begin{proof}
Let $\p \in \Supp_RM$ and set $\dim_{R_\p}M_\p=n$. Choose $\q \in \Min_RM$, so that $\q \subseteq \p$ and $\height_{R/\q} \p/\q=n$. We then have $\q \in \Min R$, because $M$ is torsionfree and $R$ satisfies $(S_1)$. Therefore, $\height_R \, \p = n$, because $R_\p$ is quasi-unmixed and $\q R_\p \in \Min R_\p$.  Thus $\dim_{R_\p}M_\p= \dim R_\p$.
\end{proof}

Let $a \in \rmW(R)$ and
$$
aM = \bigcap_{\fkp \in \Ass_RM/aM}Q(\fkp)
$$
be a primary decomposition of $aM$ in $M$, 
where $Q(\fkp)$ denotes the $\p$-primary component of $aM$ in $M$. 
We set
$$
\rmU(aM)
=\begin{cases}
M,&\text{if}\ \ \Min^1_RM/aM= \emptyset,\\
\bigcap_{\fkp \in \Min^1_RM/aM}Q(\fkp),& \text{if}\ \ \Min^1_RM/aM\neq \emptyset.
\end{cases}
$$

\begin{lem}\label{a:b}
For each $a\in\mathrm{W}(R)$ with $\Min^1_RM/aM\neq \emptyset$, there exists $b\in\mathrm{W}(R)$ such that $\height_R(a,b)\ge 2$ and $\mathrm{U}(aM)=aM:_Mb$.
\end{lem}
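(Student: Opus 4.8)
The plan is to realize $\rmU(aM)$ as a colon ideal by a standard prime-avoidance argument. Fix $a\in\mathrm{W}(R)$ with $\Min^1_RM/aM\neq\emptyset$, and write the primary decomposition $aM=\bigcap_{\fkp\in\Ass_RM/aM}Q(\fkp)$ as in the excerpt. Split the index set into $\calX=\Min^1_RM/aM$ and its complement $\calY=\Ass_RM/aM\setminus\calX$, so that $\rmU(aM)=\bigcap_{\fkp\in\calX}Q(\fkp)$. The key observation is that each $\fkp\in\calX$ has height exactly $1$, hence is a minimal prime of $M/aM$; consequently, for $\fkp\in\calX$ and $\fkq\in\calY$ we have $\fkp\not\subseteq\fkq$ (were $\fkp\subseteq\fkq$, minimality of $\fkp$ over $\ann(M/aM)$ would force $\fkp=\fkq$, contradicting $\fkq\notin\calX$). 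I would then choose, by prime avoidance, an element $b\in R$ lying in $\bigcap_{\fkq\in\calY}\fkq$ but outside $\bigcup_{\fkp\in\calX}\fkp\ \cup\ \bigcup_{\fkp\in\Ass R}\fkp$; the latter union can be thrown in because no $\fkp\in\Ass R$ contains any $\fkp'\in\calX$ (again by minimality, since $R$ satisfies $(S_1)$ so that associated primes of $R$ are minimal, i.e.\ height $0$, and the $\fkp'\in\calX$ have height $1$). Such a $b$ exists and lies in $\mathrm{W}(R)$, and since $b\notin\fkp$ for $\fkp\in\calX$ while $b\in\fkq$ for (some power) of $\fkq\in\calY$, it is a candidate for the desired element.

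Next I would verify the two claims about $b$. For the height statement: $a\in\fkp$ for every $\fkp\in\calX$ (these are in $\Supp M/aM$), but $b\notin\fkp$, so $(a,b)$ is not contained in any height-$1$ prime of $M/aM$; combined with $b\in\mathrm{W}(R)$ and the fact that $(a,b)$ lies in $\Supp M/(a,b)M$ only at primes of height $\ge 2$, this gives $\height_R(a,b)\ge 2$ (using that $M$ is torsion-free and $R$ is $(S_1)$, so that minimal primes of $M/aM$ of height $\le 1$ all have height exactly $1$). For the colon equality: replace $b$ by a suitable power if necessary so that $b^N\cdot Q(\fkq)\subseteq aM$ is arranged locally — more precisely, since $b$ lies in each $\fkq\in\calY=\Ass_R M/aM\setminus\calX$, some power $b^N$ kills the finitely many $\fkq$-primary pieces' contributions, giving $b^N\in\fkq$-primary radical; then $aM:_M b^N=\bigcap_{\fkp:\,b^N\notin\fkp}Q(\fkp)=\bigcap_{\fkp\in\calX}Q(\fkp)=\rmU(aM)$, using the standard formula $aM:_M c=\bigcap_{c\notin\fkp}Q(\fkp)$ valid when $c$ is in every $\fkp$-primary radical for $\fkp\notin\calX$. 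Finally I would observe that $(a,b^N)$ still has height $\ge 2$ since $\sqrt{(b^N)}=\sqrt{(b)}$, and rename $b^N$ as $b$.

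The main obstacle is the passage from "$b$ avoids the primes in $\calX$ and lies in those of $\calY$" to the exact colon identity $\rmU(aM)=aM:_Mb$: a single $b$ in each $\fkq\in\calY$ need not satisfy $b\cdot Q(\fkq)\subseteq aM$, so one must take a high enough power, and then one must confirm that raising to a power does not disturb $\height_R(a,b)\ge 2$ (which is clear, as radicals are unchanged) nor the avoidance of the height-$1$ primes in $\calX$ (also clear). A secondary technical point is ensuring $b^N\in\mathrm{W}(R)$ and that the formula $aM:_Mc=\bigcap_{c\notin\fkp}Q(\fkp)$ is applied correctly: this requires that $c=b^N$ lie in the radical of the $\fkq$-primary component for every $\fkq\in\calY$, which holds precisely because $b\in\fkq$ for all such $\fkq$ and there are only finitely many of them, so a common $N$ works. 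Once these bookkeeping matters are handled, the statement follows.
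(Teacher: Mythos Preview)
Your approach is the same as the paper's—realize $\rmU(aM)$ as a colon by choosing $b$ inside the product of the embedded primes $\calY$ via prime avoidance—but two steps need repair.

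First, the inclusion you check for prime avoidance is backwards. To pick $b\in\bigcap_{\fkq\in\calY}\fkq$ outside a prime $\fkp$ you must know $\fkq\not\subseteq\fkp$, not $\fkp\not\subseteq\fkq$. Your minimality argument does not yield either direction (minimality of $\fkp$ over $\ann(M/aM)$ says nothing about primes above $\fkp$). The correct reason is simply that every $\fkq\in\calY$ has $\height_R\fkq\ge 2$, while $\height_R\fkp=1$ for $\fkp\in\calX$; and for $\fkp\in\Ass R$ one has $a\in\fkq$ but $a\notin\fkp$, so again $\fkq\not\subseteq\fkp$. No appeal to $(S_1)$ is needed (and the lemma does not assume it).

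Second, avoiding only $\calX$ does not force $\height_R(a,b)\ge 2$. The height of $(a,b)$ is computed over all primes of $R$, not only those in $\Supp_RM$: a height-$1$ prime $\fkp\in\Min_R R/aR$ with $M_\fkp=0$ lies outside $\calX=\Min^1_RM/aM$, and nothing in your construction prevents $b\in\fkp$. The paper avoids this by taking $b$ outside $\bigcup_{\fkq\in\Min_R R/aR}\fkq\cup\bigcup_{\fkq\in\Ass R}\fkq$ rather than outside $\bigcup_{\fkp\in\calX}\fkp\cup\bigcup_{\fkp\in\Ass R}\fkp$; this is still possible since every $\fkq\in\calY$ has height $\ge 2$, hence is not contained in any height-$1$ prime over $(a)$. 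With that adjustment, your power-of-$b$ argument for the colon identity is correct and the proof goes through. (The paper arranges the exponent slightly differently: it first fixes $\ell$ with $\rmU(aM)=aM:_M\fka^\ell$ for $\fka=\prod_{\fkq\in\calY}\fkq$, then picks $b\in\fka^\ell$; this avoids the renaming step but is otherwise the same idea.)
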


\begin{proof}
Let $a\in\mathrm{W}(R)$ such that $\Min^1_RM/aM\neq \emptyset$. 
We may assume $\Ass_R M/aM \neq \Min^1_RM/aM$. Notice that, for every $\p\in(\Ass_R M/aM) \setminus (\Min^1_RM/aM)$, $\height_R\p\ge 2$. Consider 
$$
\fka =\prod_{\fkp \in (\Ass_R M/aM) \setminus (\Min^1_RM/aM)}\fkp
$$ 
and choose an integer $\ell>0$ such that
$\mathrm{U}(aM)= aM:_M\fka^\ell$. We then have
$\height_R\fka\ge 2$ and $\fka\cap \rmW(R)\neq \emptyset$. 
Hence
$$
\fka^\ell \not\subseteq \bigcup_{\fkq \in \Min_R R/aR}\fkq \ \cup \bigcup_{\fkq \in \Ass R}\fkq. 
$$ 
Now we take an element $b \in \fka^\ell$ but 
$
b \not\in \bigcup_{\fkq \in \Min_R R/aR}\fkq \ \cup \ \bigcup_{\fkq \in \Ass R}\fkq. 
$
Then $b\in\rmW(R)$, $\height_R(a,b)\ge 2$, and $\mathrm{U}(aM)=aM:_Mb$, as desired.
\end{proof}

\begin{thm}\label{intersection}
Let $a\in \mathrm{W}(R)$. Then $\mathrm{U}(aM)=a\widetilde{M}\cap M$.
\end{thm}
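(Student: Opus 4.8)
The plan is to prove the two inclusions $\rmU(aM)\subseteq a\widetilde{M}\cap M$ and $a\widetilde{M}\cap M\subseteq \rmU(aM)$ separately, handling first the trivial case $\Min^1_RM/aM=\emptyset$. In that case $\rmU(aM)=M$ by definition, so we must show $a\widetilde{M}\cap M=M$, i.e. $M\subseteq a\widetilde{M}$. For $x\in M$, the ideal $I=aM:_Rx$ contains $a$ and, since $\Min^1_RM/aM=\emptyset$, no height-one prime lies over $M/xM$'s relevant primes in the way that would drop the height; more precisely every associated prime of $M/aM$ containing $x$ has height $\ge 2$, so $\height_R I\ge 2$ (using that $M/aM$ has no embedded or minimal primes of height $\le 1$ once $\Min^1=\emptyset$ — here I would invoke $(S_1)$-type reasoning on $M/aM$ via the unmixedness coming from $a$ being a nonzerodivisor). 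Then $I(x/a)\subseteq M$ with $I\in\operatorname{Ht}_{\ge 2}(R)$, giving $x/a\in\widetilde{M}$, i.e. $x\in a\widetilde{M}$.

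For the main case $\Min^1_RM/aM\neq\emptyset$, I would use Lemma~\ref{a:b}: pick $b\in\rmW(R)$ with $\height_R(a,b)\ge 2$ and $\rmU(aM)=aM:_Mb$. For the inclusion $\rmU(aM)\subseteq a\widetilde{M}\cap M$: take $x\in aM:_Mb$, so $bx\in aM$, say $bx=ay$ with $y\in M$. Set $f=x/a=y/b\in V$. I claim $f\in\widetilde{M}$: indeed $af=x\in M$ and $bf=y\in M$, so $(aR+bR)f\subseteq M$ with $\height_R(aR+bR)\ge 2$, hence $f\in\widetilde{M}$; and $x=af\in a\widetilde{M}$, while $x\in M$ is given, so $x\in a\widetilde{M}\cap M$.

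For the reverse inclusion $a\widetilde{M}\cap M\subseteq \rmU(aM)=aM:_Mb$: take $x\in a\widetilde{M}\cap M$, write $x=af$ with $f\in\widetilde{M}$. By Proposition~\ref{basic} the sequence $a,b$ is $\widetilde{M}$-regular, so in particular $a$ is a nonzerodivisor on $\widetilde{M}$ and $\widetilde{M}/a\widetilde{M}$ has $b$ as a nonzerodivisor. Now $bx=abf$, and I want $bx\in aM$. From $x\in M\subseteq \widetilde{M}$ and $x=af$ we get $f=x/a$; then $bf=bx/a$, and since $bf\in\widetilde{M}$ while I need $bf\in M$... this is the crux. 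The cleanest route: $bx\in aM$ iff $b\cdot(x/a)\in M$ in $V$, i.e. $bf\in M$. We know $f\in\widetilde{M}$, so some $J\in\operatorname{Ht}_{\ge 2}(R)$ has $Jf\subseteq M$; combined with $af=x\in M$ we get $(J+aR)f\subseteq M$. The issue is to push $b$ into making $bf\in M$ — I expect to argue locally: for $\fkp\in\Ass_R M/aM$, either $\height\fkp\ge 2$, in which case $(J+aR)_\fkp$ contains an $M_\fkp/aM_\fkp$-regular element so the relevant localization of $\widetilde{M}/M$ vanishes and $f\in M_\fkp$, or $\fkp\in\Min^1_RM/aM$, in which case $b\in\fkp^{(\ell)}$-type condition from Lemma~\ref{a:b}'s construction forces $bf\in M_\fkp$. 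Checking $bf\in M$ holds at every such prime (and trivially at primes not in $\Supp M/aM$) gives $bf\in M$, hence $x=af$ with $bx=abf\in aM$, i.e. $x\in aM:_Mb=\rmU(aM)$.

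The main obstacle is the reverse inclusion, specifically controlling $\widetilde{M}/M$ at the height-one primes $\fkp\in\Min^1_RM/aM$: at those primes $\widetilde{M}_\fkp$ need not equal $M_\fkp$, so one genuinely needs that $b$ was chosen inside the product of the embedded/higher primary components (Lemma~\ref{a:b}) and that multiplication by $b$ lands $f$ back in $M$ at exactly those primes where $\widetilde{M}$ can be strictly bigger. I would organize this as a primary-decomposition chase: write $aM=\bigcap_\fkp Q(\fkp)$ and check $x=af\in Q(\fkp)$ directly for height-one $\fkp$ (where $\widetilde{M}_\fkp=M_\fkp$ since $a,b$ being a height-two system means $b$ localizes to a unit at a one-dimensional $R_\fkp$... wait — rather, at such $\fkp$ the module $M_\fkp$ is already $(S_2)$ over the one-dimensional $R_\fkp$, forcing $\widetilde{M}_\fkp=M_\fkp$) versus $\fkp$ of height $\ge 2$, where $\rmU(aM)$ imposes no condition since $Q(\fkp)$ is not among the components defining $\rmU(aM)$, and $x\in M$ suffices. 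Reconciling these two regimes cleanly is the delicate bookkeeping, but each piece reduces to facts already in hand (Propositions~\ref{basic}, \ref{smallest} and Lemma~\ref{a:b}).
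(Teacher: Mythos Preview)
Your final paragraph contains the correct argument, and it is essentially the paper's; but you reach it through an unnecessary detour and at one point state the key localization fact backwards. For the inclusion $a\widetilde{M}\cap M\subseteq \rmU(aM)$ you do not need the element $b$ from Lemma~\ref{a:b} at all. Given $x=af$ with $f\in\widetilde{M}$, pick $I\in\operatorname{Ht}_{\ge 2}(R)$ with $If\subseteq M$; then $I\subseteq M:_Rf=aM:_Rx$, so $\height_R(aM:_Rx)\ge 2$. For each $\fkp\in\Min^1_RM/aM$ this forces $aM:_Rx\not\subseteq\fkp$, hence $x\in aM_\fkp\cap M=Q(\fkp)$, and intersecting over such $\fkp$ gives $x\in\rmU(aM)$. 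Equivalently, in your phrasing: $\widetilde{M}_\fkp=M_\fkp$ whenever $\height_R\fkp\le 1$, since any $I\in\operatorname{Ht}_{\ge 2}(R)$ becomes the unit ideal in $R_\fkp$. This is the \emph{opposite} of what you assert mid-argument (``at those primes $\widetilde{M}_\fkp$ need not equal $M_\fkp$''); equality is automatic precisely at height-one primes, while $\widetilde{M}$ can be strictly larger only at primes of height~$\ge 2$, which are irrelevant to $\rmU(aM)$. Your attempt to prove $bf\in M$ globally is therefore both harder and unnecessary; Lemma~\ref{a:b} is used only for the forward inclusion $\rmU(aM)\subseteq a\widetilde{M}\cap M$, where your argument already matches the paper's.

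For the degenerate case $\Min^1_RM/aM=\emptyset$, the element-by-element reasoning and the appeal to $(S_1)$ can be dropped: the single ideal $(0):_RM+aR$ lies in $\operatorname{Ht}_{\ge 2}(R)$ (its minimal primes are exactly $\Min_RM/aM$, none of height~$1$ by hypothesis and none of height~$0$ since $a\in\rmW(R)$ avoids every associated prime of $R$) and satisfies $\bigl((0):_RM+aR\bigr){\cdot}\dfrac{M}{a}\subseteq M$, giving $M\subseteq a\widetilde{M}$ in one line.
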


\begin{proof}
Let $a\in \mathrm{W}(R)$. If $\Min^1_RM/aM=\emptyset$, then $\dfrac{M}{a} \subseteq \widetilde{M}$ because 
\begin{center}
$\height_R ([(0):_RM]+aR) \ge 2$ and $([(0):_RM]+aR){\cdot}\dfrac{M}{a} \subseteq M$. 
\end{center}
Hence we have $\mathrm{U}(aM)=M=a\widetilde{M}\cap M$ in this case.
Thus we may assume  $\Min^1_RM/aM\neq\emptyset$. 
Let $m\in a\widetilde{M}\cap M$ and write $m=af$ for some $f\in \widetilde{M}$.
Then there exists an ideal $I \in \operatorname{Ht}_{\ge 2}(R)$ so that $If\subseteq M$. Hence $\height_R (aM:_Rm) \ge 2$ by $I \subseteq M:_Rf = aM:_Rm$.
Therefore we have $m \in aM_\p \cap M = Q(\p)$ for every $\p \in \Min^1_RM/aM$ which shows that $m \in \bigcap_{\fkp \in \Min^1_RM/aM}Q(\p) = \mathrm{U}(aM)$. 
Next, we prove the converse inclusion. 
By Lemma \ref{a:b}, there exists $b\in\mathrm{W}(R)$ such that $\height_R(a,b)\ge 2$ and $\mathrm{U}(aM)=aM:_M b$. 
We then have $(a,b)\cdot\dfrac{\rmU(aM)}{a}\subseteq M$ which implies $\dfrac{\rmU(aM)}{a}\subseteq\widetilde{M}$. Therefore $\rmU(aM)= a\widetilde{M}\cap M$, as claimed.
\end{proof}

We summarize some consequences. 

\begin{cor}
The equality $\widetilde{M}= \displaystyle\bigcup_{a \in \rmW(R)}\dfrac{\rmU(aM)}{a}$ holds.
\end{cor}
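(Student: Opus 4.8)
The plan is to read this off directly from Theorem \ref{intersection}, which computes $\rmU(aM) = a\widetilde{M}\cap M$ for each $a\in\rmW(R)$.

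First I would establish, for a fixed $a\in\rmW(R)$, the identity $\dfrac{\rmU(aM)}{a} = \widetilde{M}\cap\dfrac{M}{a}$ as $R$-submodules of $V$. Using $\rmU(aM)=a\widetilde{M}\cap M$, the inclusion $\subseteq$ holds because any $m\in a\widetilde{M}$ can be written $m=af$ with $f\in\widetilde{M}$, so $\dfrac{m}{a}=a^{-1}m=f\in\widetilde{M}$, while $m\in M$ gives $\dfrac{m}{a}\in\dfrac{M}{a}$; conversely, if $g\in\widetilde{M}\cap\dfrac{M}{a}$, write $g=\dfrac{m}{a}$ with $m\in M$, and then $m=ag\in a\widetilde{M}\cap M=\rmU(aM)$, so $g\in\dfrac{\rmU(aM)}{a}$. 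This is an elementary set-theoretic verification; the only subtlety is the routine identification of $\dfrac{m}{a}$ with $a^{-1}m$ in $V=\rmQ(R)\otimes_R M$.

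Taking the union over all $a\in\rmW(R)$ then gives
$$
\bigcup_{a\in\rmW(R)}\frac{\rmU(aM)}{a} \;=\; \bigcup_{a\in\rmW(R)}\left(\widetilde{M}\cap\frac{M}{a}\right) \;=\; \widetilde{M}\cap\bigcup_{a\in\rmW(R)}\frac{M}{a}.
$$
Since $\widetilde{M}\subseteq V$ and, as recorded right after the definition of $V$, every element of $V$ has the form $\dfrac{m}{a}$ with $m\in M$ and $a\in\rmW(R)$, we have $\bigcup_{a\in\rmW(R)}\dfrac{M}{a}=V$, and hence the right-hand side equals $\widetilde{M}\cap V=\widetilde{M}$, as desired.

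There is no genuine obstacle here: the statement is a direct corollary of Theorem \ref{intersection}. If one prefers to bypass the intermediate identity, the two inclusions can be argued in one line each: for $\widetilde{M}\subseteq\bigcup_a\dfrac{\rmU(aM)}{a}$, take $f\in\widetilde{M}$, write $f=\dfrac{m}{a}$ with $m\in M$, $a\in\rmW(R)$, and note $m=af\in a\widetilde{M}\cap M=\rmU(aM)$; for the reverse inclusion, $\rmU(aM)=a\widetilde{M}\cap M\subseteq a\widetilde{M}$ forces $\dfrac{\rmU(aM)}{a}\subseteq\widetilde{M}$ for every $a$.
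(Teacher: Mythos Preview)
Your proof is correct and follows essentially the same approach as the paper: both inclusions are read off directly from Theorem \ref{intersection}, using that every $f\in\widetilde{M}\subseteq V$ can be written as $\dfrac{m}{a}$ with $m\in M$ and $a\in\rmW(R)$. Your intermediate identity $\dfrac{\rmU(aM)}{a}=\widetilde{M}\cap\dfrac{M}{a}$ is a mild repackaging, and the one-line argument you give at the end is exactly the paper's proof.
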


\begin{proof}
By Theorem \ref{intersection}, we have $\rmU(aM)\subseteq a\widetilde{M}$ for all $a \in \rmW(R)$. Conversely, let $f\in\widetilde{M}$ and write $f=\dfrac{x}{a}$ where $x\in M$ and $a \in \rmW(R)$. Then $x=af\in a\widetilde{M}\cap M=\mathrm{U}(aM),$ and hence $f\in \dfrac{\rmU(aM)}{a}$. 
\end{proof}

\begin{cor}[{\cite[Corollary 10.5]{M}}]\label{integral}
If $R$ is locally quasi-unmixed, then $\widetilde{R}\subseteq \overline{R}$.
\end{cor}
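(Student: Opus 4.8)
The plan is to show $\widetilde{R} \subseteq \overline{R}$ by working one element at a time and reducing to the local, one-dimensional situation where the weakly Arf / $(S_2)$-type machinery has full force. Take $f \in \widetilde{R}$ and write $f = x/a$ with $x \in R$ and $a \in \rmW(R)$. By definition there is an ideal $I \in \operatorname{Ht}_{\ge 2}(R)$ with $If \subseteq R$, equivalently $I \subseteq aR :_R x$. The first step is to observe that it suffices to check $f_\p \in \overline{R_\p}$ for every height-one prime $\p$ of $R$ (together with the primes in $\Ass R$, where nothing happens since $f \in \rmQ(R)$ already): indeed $\overline{R} = \bigcap_{\height \p \le 1} R_\p$ when $R$ is locally quasi-unmixed and satisfies $(S_1)$, because such an $R$ is normal if and only if it satisfies $(R_1)$ and $(S_2)$, and the failure of integral closedness is detected in codimension one. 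Actually, more simply: localizing the containment $If \subseteq R$ at a prime $\p$ with $\height_R \p = 1$ gives $I_\p f_\p \subseteq R_\p$ with $\height_{R_\p} I_\p \ge \min\{2, \dim R_\p\}$; but $\dim R_\p = 1$, so either $I_\p = R_\p$, forcing $f_\p \in R_\p \subseteq \overline{R_\p}$, or else $I_\p$ is $\fkm_\p$-primary.

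The second step handles the remaining case: $R_\p$ is a one-dimensional quasi-unmixed local ring (hence its completion is equidimensional, and since $R$ satisfies $(S_1)$ the ring $R_\p$ is Cohen-Macaulay of dimension one, with no embedded primes), and $I_\p$ is an $\fkm_\p$-primary ideal with $I_\p f_\p \subseteq R_\p$. Here I would invoke that $I_\p$, being $\fkm_\p$-primary in a one-dimensional Cohen-Macaulay local ring, contains a parameter, i.e. there is $b \in I_\p \cap \rmW(R_\p)$; and in fact by prime avoidance one can choose $b$ so that $\height_{R_\p}(a,b) \ge 2$ in the global sense — but in dimension one that is automatic once $I_\p$ is primary. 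Now $b f_\p \in R_\p$ means $f_\p \in \frac{1}{b}R_\p$, and $f_\p = x/a$, so $f_\p \in \frac{R_\p}{a} \cap \frac{R_\p}{b}$; iterating, $b^n f_\p \in R_\p$ for all $n$ from $I_\p^n f_\p \subseteq R_\p$ and $I_\p^n$ still $\fkm_\p$-primary, giving $R_\p[f_\p] \subseteq \frac{1}{b}R_\p$ — wait, this needs the stronger statement $f_\p^k \in \frac{1}{b^?}R_\p$. The cleaner route: from $I_\p f_\p \subseteq R_\p$ we get $I_\p f_\p^n \subseteq R_\p$ is false in general, so instead I would argue that $R_\p[f_\p]$ is a module-finite extension of $R_\p$. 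Indeed $R_\p[f_\p] = \bigcup_n \frac{1}{a^n}(a^n R_\p + a^{n-1} x R_\p + \cdots + x^n R_\p)$, and the ascending chain of fractionary ideals $\frac{R_\p[f_\p]}{1}$ inside $\rmQ(R_\p)$ stabilizes because it is contained in $\frac{1}{b}R_\p$ for some single $b$: pick $b \in I_\p \cap \rmW(R_\p)$; then $b \cdot f_\p^n \in R_\p$ for all $n$ would follow if $b \in I_\p^n$, which fails, so take instead $b' \in \bigcap_n I_\p^{(n)}$... this does not terminate either.

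Given these complications, the honest approach is the one the paper is surely taking: apply Theorem \ref{intersection} and the earlier structural results rather than arguing ad hoc. By Theorem \ref{intersection}, $\rmU(aR) = a\widetilde{R} \cap R$ for each $a \in \rmW(R)$, and $\rmU(aR) = \bigcap_{\p \in \Min^1_R R/aR} Q(\p)$ is an intersection of height-one-primary components of $aR$; since $R$ satisfies $(S_1)$ and is locally quasi-unmixed, $R_\p$ is a one-dimensional Cohen-Macaulay local ring for each such $\p$, where $a R_\p = Q(\p) R_\p$ and the classical Krull–Serre description gives $a\overline{R_\p} \cap R_\p \supseteq$ the $\p$-primary component precisely when $f = x/a$ has $f_\p$ integral. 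Thus for $f = x/a \in \widetilde{R}$ we get $x \in \rmU(aR)$, hence $x \in Q(\p)$ for all $\p \in \Min^1_R R/aR$, hence $x/a \in \frac{Q(\p)}{a} \subseteq aR_\p :_{R_\p} \bigl(\text{a parameter}\bigr)$, and in a one-dimensional Cohen-Macaulay local ring $aR :_R b \subseteq a\overline{R}$ whenever $(a,b)$ is a parameter ideal (this is the standard fact that $\overline{aR} \supseteq aR:_R b$, combined with $\overline{aR} = a\overline{R} \cap R$ for normal... ). The \textbf{main obstacle} is therefore precisely this last local claim — that an element $x$ of $R_\p$ lying in the $\p$-primary component of $aR_\p$ yields $x/a \in \overline{R_\p}$ — which is a statement in one-dimensional Cohen–Macaulay local algebra equivalent to the containment $aR_\p :_{R_\p} b \subseteq \overline{aR_\p}$ for a system of parameters $a,b$; this is where quasi-unmixedness is essential (it guarantees $\overline{R_\p}$ is module-finite and that no embedded components of $aR_\p$ interfere), and verifying it carefully — likely via the valuative criterion or via Rees's theorem on integral closures of parameter ideals — is the crux of the argument.
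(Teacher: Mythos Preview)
Your final paragraph has the right skeleton --- reduce, via Theorem \ref{intersection}, to showing $\rmU(aR) \subseteq \overline{aR}$ --- and this is exactly the paper's strategy. But you have the crux backwards, and the argument as written does not close.

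The local statement you flag as the ``main obstacle'' is in fact trivial. At any $\p \in \Min^1_R R/aR$ one has $\rmU(aR)_\p = aR_\p$, because $Q(\p)_\p = aR_\p$ and $Q(\q)_\p = R_\p$ for the other height-one minimal primes $\q$. So $x \in \rmU(aR)$ already gives $x/a \in R_\p$ outright at every height-one prime; there is no need for ``$aR_\p :_{R_\p} b \subseteq \overline{aR_\p}$'' or any one-dimensional integrality argument. (Your side assumption that $R$ satisfies $(S_1)$ is not part of the hypotheses here and is not needed for this step.)

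The genuine obstacle is the one you waved past in your first paragraph: getting from ``$x \in aR_\p$ for every height-one $\p$'' to ``$x \in \overline{aR}$'' globally. Your claimed identity $\overline{R} = \bigcap_{\height_R \p \le 1} R_\p$ is false --- that intersection is closer to an $(S_2)$-ification than to a normalization --- so the reduction to height one needs a different justification. The paper supplies it via Ratliff's theorem (\cite[Theorem 2.12]{R}): in a locally quasi-unmixed Noetherian ring, every $\p \in \Ass_R R/\overline{aR}$ has $\height_R \p = 1$. Granting this, if $\rmU(aR) \not\subseteq \overline{aR}$, pick $\p \in \Ass_R\bigl((\rmU(aR)+\overline{aR})/\overline{aR}\bigr) \subseteq \Ass_R R/\overline{aR}$; then $\height_R \p = 1$, forcing $\rmU(aR)_\p = aR_\p \subseteq \overline{aR}_\p$, a contradiction. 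This theorem of Ratliff is precisely where the quasi-unmixed hypothesis enters, and it is the ingredient your proposal never names.
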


\begin{proof}
Since $\overline{aR}=a\overline{R}\cap R$ for all $a \in \rmW(R)$, we have 
$$
\overline{R}=\bigcup_{a \in \rmW(R)}\frac{\overline{aR}}{a}.
$$
It suffices to show $\rmU(aR)\subseteq \overline{aR}$ for all $a \in \rmW(R)$. Suppose the contrary, i.e., $\rmU(aR)\not\subseteq \overline{aR}$ for some $a \in \rmW(R)$, and seek a contradiction. Indeed, we consider the $R$-module
$$
N=(\rmU(aR)+\overline{aR})/\overline{aR}
$$ 
and take $\p\in\Ass_RN$. Then $\p\in\Ass_RR/\overline{aR}$. Since $R$ is locally quasi-unmixed, by \cite[Theorem 2.12]{R}, we see that $\height_R\p=1$. So $\rmU(aR)_\p=aR_\p$, and hence $N_\p=(0)$. This makes a contradiction. Therefore $\rmU(aR)\subseteq \overline{aR}$ for all $a \in \rmW(R)$.
\end{proof}

\begin{cor}\label{cor3}
The following three conditions are equivalent.
\begin{enumerate}
	\item[$(1)$] $\widetilde{M}$ is a finitely generated $R$-module.
	\item[$(2)$] $\mathrm{U}(aM)=a\widetilde{M}$ for some $a\in\mathrm{W}(R)$.
	\item[$(3)$] $(M:_R\widetilde{M})\cap \mathrm{W}(R)\neq \emptyset$.
\end{enumerate}
When this is the case, one has $\widetilde{M}=\dfrac{\mathrm{U}(aM)}{a}$ for some $a\in\mathrm{W}(R)$.
\end{cor}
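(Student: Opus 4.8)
The plan is to establish the cycle $(1)\Rightarrow(3)\Rightarrow(2)\Rightarrow(1)$, together with the final assertion, using the description $\widetilde{M}=\bigcup_{a\in\mathrm{W}(R)}\frac{\mathrm{U}(aM)}{a}$ from the previous corollary and the identity $\mathrm{U}(aM)=a\widetilde{M}\cap M$ from Theorem \ref{intersection}.

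For $(1)\Rightarrow(3)$: if $\widetilde{M}$ is finitely generated over $R$, write $\widetilde{M}=\sum_{i=1}^n R f_i$ with $f_i=\frac{x_i}{a_i}$, $x_i\in M$, $a_i\in\mathrm{W}(R)$. Then $c=a_1a_2\cdots a_n\in\mathrm{W}(R)$ satisfies $c f_i\in M$ for all $i$, hence $c\widetilde{M}\subseteq M$, i.e. $c\in(M:_R\widetilde{M})\cap\mathrm{W}(R)$. For $(3)\Rightarrow(1)$: given $c\in(M:_R\widetilde{M})\cap\mathrm{W}(R)$, multiplication by $c$ embeds $\widetilde{M}$ into $M$ as the submodule $c\widetilde{M}\subseteq M$; since $R$ is Noetherian and $M$ is finitely generated, $c\widetilde{M}$ is finitely generated, hence so is $\widetilde{M}$.

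For $(3)\Rightarrow(2)$ and the final assertion: suppose $c\in(M:_R\widetilde{M})\cap\mathrm{W}(R)$. I will first reduce to the case $\Min^1_R M/cM\neq\emptyset$. Indeed, if $\Min^1_R M/cM=\emptyset$ then $\frac{M}{c}\subseteq\widetilde{M}$ as shown in the proof of Theorem \ref{intersection}, so $M\subseteq c\widetilde{M}\subseteq M$, forcing $c\widetilde{M}=M=\widetilde{M}$, whence $\mathrm{U}(cM)=M=c\widetilde{M}$ and $\widetilde{M}=\frac{\mathrm{U}(cM)}{c}$; alternatively one replaces $c$ by a suitable element of $\mathrm{W}(R)$ with $\Min^1_R M/(\cdot)M\neq\emptyset$ (e.g. $c$ times a non-zerodivisor lying in a height-one prime, which exists once $M\neq\widetilde{M}$, the case $M=\widetilde{M}$ being trivial). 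Assuming $\Min^1_R M/cM\neq\emptyset$, apply Lemma \ref{a:b} to get $b\in\mathrm{W}(R)$ with $\height_R(c,b)\ge2$ and $\mathrm{U}(cM)=cM:_M b$. By Proposition \ref{basic} the sequence $c,b$ is $\widetilde{M}$-regular. Now take any $f\in\widetilde{M}$; then $cf\in M$ by hypothesis, and $bf\in\widetilde{M}$ as well with $c(bf)=b(cf)\in cM$, so $bf\in cM:_M b=\mathrm{U}(cM)\subseteq c\widetilde{M}$; regularity of $c,b$ gives $f\in\widetilde{M}:_{\widetilde{M}}b$... — more directly: from $cf\in M$ and $b(cf)\in cM$ one gets $cf\in cM:_M b=\mathrm{U}(cM)$, hence $f\in\frac{\mathrm{U}(cM)}{c}$. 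This proves $\widetilde{M}\subseteq\frac{\mathrm{U}(cM)}{c}$; the reverse inclusion $\mathrm{U}(cM)\subseteq c\widetilde{M}$ is Theorem \ref{intersection}. Therefore $\mathrm{U}(cM)=c\widetilde{M}$, which is $(2)$ (with $a=c$), and simultaneously $\widetilde{M}=\frac{\mathrm{U}(cM)}{c}$, the final assertion.

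For $(2)\Rightarrow(1)$: if $\mathrm{U}(aM)=a\widetilde{M}$ for some $a\in\mathrm{W}(R)$, then $a\widetilde{M}=\mathrm{U}(aM)\subseteq M$ is a submodule of the finitely generated module $M$ over the Noetherian ring $R$, hence finitely generated; multiplying back by $a^{-1}$, $\widetilde{M}$ is finitely generated. The main obstacle I anticipate is the bookkeeping in $(3)\Rightarrow(2)$ around the degenerate case $\Min^1_R M/cM=\emptyset$: one must be careful that the chosen non-zerodivisor $a$ realizing $(2)$ is the \emph{same} one for which $\widetilde{M}=\frac{\mathrm{U}(aM)}{a}$, and that passing from $c$ to a more convenient multiple does not disturb the condition $c\in(M:_R\widetilde{M})$ — but since $(M:_R\widetilde{M})$ is an ideal, any multiple of $c$ still lies in it, so this is harmless once stated carefully.
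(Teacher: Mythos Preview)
Your implications $(1)\Leftrightarrow(3)$ and $(2)\Rightarrow(1)$ are correct. The gap is in your argument for $(3)\Rightarrow(2)$: the key assertion ``$c(bf)=b(cf)\in cM$'' is not justified. From $c\in(M:_R\widetilde{M})$ you know $cf\in M$, hence $b(cf)\in bM\subseteq M$; and from $bf\in\widetilde{M}$ you know $c(bf)\in c\widetilde{M}\subseteq M$. Neither chain lands you in $cM$. Since $c$ is a non-zerodivisor, $b(cf)\in cM$ is equivalent to $bf\in M$, and there is no reason the element $b$ produced by Lemma~\ref{a:b} should lie in $(M:_R\widetilde{M})$. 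So the step $cf\in cM:_Mb$ does not follow.

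The entire detour through Lemma~\ref{a:b} and the case distinction on $\Min^1_RM/cM$ is unnecessary. Theorem~\ref{intersection} already gives $\mathrm{U}(cM)=c\widetilde{M}\cap M$ for \emph{every} $c\in\mathrm{W}(R)$, with no hypothesis on $\Min^1_RM/cM$. If $c\in(M:_R\widetilde{M})\cap\mathrm{W}(R)$, then $c\widetilde{M}\subseteq M$, so $c\widetilde{M}\cap M=c\widetilde{M}$, and therefore $\mathrm{U}(cM)=c\widetilde{M}$. This is $(2)$ with $a=c$, and dividing by $c$ gives the final assertion $\widetilde{M}=\dfrac{\mathrm{U}(cM)}{c}$. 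This is the argument the paper intends; your version recovers the inclusion $\mathrm{U}(cM)\subseteq c\widetilde{M}$ from Theorem~\ref{intersection} but then tries to prove the reverse inclusion by hand, when it is already contained in the same theorem.
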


We now reach the goal of this section.

\begin{thm}\label{conclusionab}
Suppose that $\widetilde{M}$ is a finitely generated $R$-module. Then there exist $a,b\in\rmW(R)$ such that 
$$
\widetilde{M}=\dfrac{M}{a}\cap\dfrac{M}{b}.
$$
Hence, if $R$ satisfies $(S_1)$, then $\dfrac{M}{a}\cap\dfrac{M}{b}$ is the smallest $R$-submodule of $V$ which contains $M$ and satisfies $(S_2)$. 
\end{thm}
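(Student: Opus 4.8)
The plan is to use the characterization from Corollary \ref{cor3} together with the description $\widetilde{M} = \bigcup_{a\in\rmW(R)}\frac{\rmU(aM)}{a}$ and the key identity $\rmU(aM) = a\widetilde{M}\cap M$ from Theorem \ref{intersection}. Since $\widetilde{M}$ is finitely generated, Corollary \ref{cor3} gives an element $a\in\rmW(R)$ with $\widetilde{M} = \frac{\rmU(aM)}{a}$, hence $a\widetilde{M} = \rmU(aM) = a\widetilde{M}\cap M\subseteq M$. If $\Min^1_RM/aM = \emptyset$ we are in the degenerate case $\widetilde{M} = M$ (so that $\widetilde{M} = \frac{M}{a}\cap\frac{M}{a}$ works trivially with $b=a$); thus we may assume $\Min^1_RM/aM\neq\emptyset$. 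Then Lemma \ref{a:b} produces $b\in\rmW(R)$ with $\height_R(a,b)\ge 2$ and $\rmU(aM) = aM:_M b$.

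With these $a,b$ in hand, the core computation is to verify
$$
\widetilde{M} = \frac{M}{a}\cap\frac{M}{b} = \frac{aM:_M b}{a}.
$$
One inclusion is immediate: $\frac{aM:_Mb}{a} = \frac{\rmU(aM)}{a} = \widetilde{M}$ by the choice of $a$ combined with $\rmU(aM) = aM:_Mb$. Wait — that already gives equality directly, so in fact the identity $\widetilde{M} = \frac{M}{a}\cap\frac{M}{b}$ drops out at once from $\widetilde{M}=\frac{\rmU(aM)}{a}$ and $\rmU(aM)=aM:_Mb=(aM:_Rb)\cap M$, using that $\frac{M}{a}\cap\frac{M}{b}$ is by definition $\frac{aM:_Mb}{a}$ as recorded at the start of Section 2. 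So the only genuine content is assembling the right $a$ and $b$: get $a$ from Corollary \ref{cor3}, dispose of the $\Min^1_RM/aM=\emptyset$ case, and extract $b$ from Lemma \ref{a:b}.

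For the final sentence of the statement, once $R$ satisfies $(S_1)$ we invoke Theorem \ref{S2}: since $\widetilde{M}$ is finitely generated over $R$, it is the smallest $R$-submodule of $V$ containing $M$ and satisfying $(S_2)$; rewriting $\widetilde{M}$ as $\frac{M}{a}\cap\frac{M}{b}$ transfers this minimality property verbatim.

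I do not expect a real obstacle here — the theorem is essentially a bookkeeping synthesis of Theorem \ref{intersection}, Lemma \ref{a:b}, Corollary \ref{cor3} and Theorem \ref{S2}. The only point requiring a moment's care is the separate treatment of the case $\Min^1_RM/aM=\emptyset$, where Lemma \ref{a:b} does not apply and one instead notes $\widetilde{M}=M=\frac{M}{a}\cap\frac{M}{a}$ directly (or picks any $b\in\rmW(R)$ with $\height_R(a,b)\ge 2$ and observes $aM:_Mb=M$ since $a,b$ is $M$-regular in that case, via Theorem \ref{intersection} again).
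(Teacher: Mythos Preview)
Your argument mirrors the paper's exactly: Corollary~\ref{cor3} supplies $a\in\rmW(R)$ with $\widetilde{M}=\dfrac{\rmU(aM)}{a}$, Lemma~\ref{a:b} supplies $b\in\rmW(R)$ with $\rmU(aM)=aM:_Mb$, and the identity $\dfrac{M}{a}\cap\dfrac{M}{b}=\dfrac{aM:_Mb}{a}$ finishes the first claim, with Theorem~\ref{S2} giving the final sentence. You are in fact more careful than the paper in isolating the case $\Min^1_RM/aM=\emptyset$ (where Lemma~\ref{a:b} does not literally apply); one small correction there: from $\rmU(aM)=M$ you obtain $\widetilde{M}=\dfrac{M}{a}$, not $\widetilde{M}=M$ in general, so your choice $b=a$ works because $\dfrac{M}{a}\cap\dfrac{M}{a}=\dfrac{M}{a}=\widetilde{M}$, whereas the alternative you float at the end (picking $b$ with $a,b$ an $M$-regular sequence) would yield $aM:_Mb=aM$ and hence $\dfrac{M}{a}\cap\dfrac{M}{b}=M$, which need not equal $\widetilde{M}$.
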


\begin{proof}
By Corollary \ref{cor3}, we can choose $a\in\rmW(R)$ such that $\widetilde{M}=\dfrac{\mathrm{U}(aM)}{a}$. Thanks to Lemma \ref{a:b}, there exists $b\in\mathrm{W}(R)$ such that $\mathrm{U}(aM)=aM:_Mb$. Hence $
\widetilde{M}=\dfrac{M}{a}\cap\dfrac{M}{b}$, and the last assertion follows from Theorem \ref{S2}.
\end{proof}

\section{Global canonical modules and module-finiteness of $\widetilde{M}$}

This section aims at considering the problem of when the $R$-module $\widetilde{M}$ is finitely generated. To attack this problem, we apply the theory of global canonical module given by R. Sharp \cite{S}. 

Let $R$ be a Noetherian ring and assume that $R$ is a homomorphic image of a Gorenstein ring. We set 
$$
R=S/I,
$$
where $I$ is an ideal of a Gorenstein ring $S$. Let 
$$
I = \bigcap_{P \in \Ass_S R}Q(P)
$$
be a primary decomposition of $I$ in $S$, where $Q(P)$ stands for a $P$-primary component. For each ideal $J$ of $S$, we set $\rmV(J)=\{P\in\Spec S\mid J\subseteq P\}$. 
Then 
$$
\rmV(I)=\bigsqcup_{i=1}^nC_i
$$
where $C_1, C_2,\dots C_n$ are the connected components of $\rmV(I)$, i.e., the equivalence classes of the equivalence relation $\sim$ on $\rmV(I)$, which is given by, for $P,P'\in\rmV(I)$, $P\sim P'$ if and only if there exist an integer $\ell>0$ and a sequence $P_1, P_2, \dots , P_{\ell+1}$ in $\rmV(I)$ such that $P_1=P$, $P_{\ell+1}=P'$, and $P_{i}+P_{i+1}\neq S$ for all $1\le i\le \ell$. We put
$$
I_i=\bigcap_{P\in (\Ass_S R) \cap  C_i}Q(P).
$$
We then have $\bigcap_{i=1}^nI_i=I$ and $C_i=\mathrm{V}(I_i)$ for all $1\le i\le n$. Since $I_i+I_j=S$ for all $1\le i,j\le n$ with $i\neq j$, we have an isomorphism
$$
R\cong S/I_1\times \dots \times S/I_n
$$
of rings. 
We define
$$
\rmK_R=\bigoplus_{i=1}^n\Ext_S^{t_i}(S/I_i, S)
$$
and call it {\it the global canonical module} of $R$, where $t_i=\height_SI_i$ for $1\le i\le n$.
Notice that $\K_R$ is a finitely generated $R$-module, and $\K_R$ coincides with the ordinary canonical module provided $R$ is a local ring.

\begin{Example}\label{ex3}
Let $S=k[X,Y]$ the polynomial ring over a field $k$. We consider 
$$
Q_1=(X),\ Q_2=(1+Y),\ Q_3=(1+XY),\ Q_4=(X^2,Y),\ Q_5=(1+X+XY,~ Y^2),$$ and $I=\bigcap_{i=1}^5Q_i$.
The ring $R=S/I$ is not Cohen-Macaulay and $R\cong  S/I_1\times S/I_2$ as a ring, where $I_1=\bigcap_{i=1}^4Q_i$ and $I_2=Q_5$.
By setting $U_1=\bigcap_{i=1}^3Q_i$ which the principal ideal of $S$, we have the isomorphisms 
$$
\Ext_S^{1}(S/I_1, S)\cong \Ext_S^{1}(S/U_1, S)\cong S/U_1
$$ 
of $R$-modules. Since $I_2$ is generated by the $S$-regular sequence, we get
$$
\Ext_S^{2}(S/I_2, S)\cong S/I_2
$$ 
as an $R$-module. Therefore $\rmK_R\cong S/U_1\oplus S/I_2$ as an $R$-module.
\end{Example}

We now summarize some basic results of $\K_R$. 

\begin{prop}\label{AssKR}
$\Ass_R\K_R=\bigcup_{i=1}^n\{PR\mid P\in\mathrm{V}(I_i), \height_{S}P=\height_{S}I_i\}$.
\end{prop}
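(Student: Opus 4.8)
The plan is to reduce to the connected (equivalently, local-at-each-prime) case and then invoke the standard local duality description of the associated primes of a canonical module. Since $\K_R = \bigoplus_{i=1}^n \Ext_S^{t_i}(S/I_i,S)$ is a direct sum, we have $\Ass_R \K_R = \bigcup_{i=1}^n \Ass_R \Ext_S^{t_i}(S/I_i,S)$, and because $R \cong S/I_1 \times \cdots \times S/I_n$ with the $S/I_i$ pairwise comaximal, each summand $\Ext_S^{t_i}(S/I_i,S)$ is an $S/I_i$-module whose $R$-associated primes are exactly the images of its $(S/I_i)$-associated primes. So it suffices to show, for a fixed $i$, that $\Ass_{S/I_i}\Ext_S^{t_i}(S/I_i,S)$ consists precisely of the primes $P/I_i$ with $P \in \mathrm{V}(I_i)$ and $\height_S P = t_i = \height_S I_i$; that is, the primes of $S/I_i$ of coheight matching the "dimension" stratum, i.e. the primes $P$ minimal over $I_i$ with the generic height.

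The key point is a local computation: for $P \in \mathrm{V}(I_i)$, localize at $P$. Then $(S_P, PS_P)$ is Gorenstein local of dimension $\height_S P$, and $\Ext_S^{t_i}(S/I_i,S)_P = \Ext_{S_P}^{t_i}(S_P/I_iS_P, S_P)$. By local duality over the Gorenstein ring $S_P$ — equivalently, by the formula $\Ext_{S_P}^{j}(-,S_P)$ computing the canonical-module-type functor — one has $\Ext_{S_P}^{t_i}(S_P/I_iS_P,S_P) \neq 0$ if and only if $\dim S_P/I_iS_P = \dim S_P - t_i$, i.e. $\height_S P - \height_{S_P} I_iS_P = \height_S P - t_i$, which forces $\height_{S_P} I_i S_P = t_i$, i.e. $P$ is a minimal prime of $I_i$ of height exactly $t_i$. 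Moreover, for such $P$ the module $\Ext_{S_P}^{t_i}(S_P/I_iS_P, S_P)$ is the canonical module of the (unmixed, since $I_i$ is the intersection of $P$-primary components over the single component $C_i$) artinian-over-$S_P/I_iS_P$... more precisely it has dimension $\dim S_P/I_iS_P$ and $P S_P/I_i S_P \in \Ass$ of it. Combining: $P/I_i \in \Ass_{S/I_i}\Ext_S^{t_i}(S/I_i,S)$ iff $(P/I_i)$ localized is in $\Ass$ of the localization iff $\height_S P = t_i$ and $P \supseteq I_i$.

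Concretely I would run the argument as: (a) prove the direct-sum/comaximal reduction as above; (b) recall that $P \in \Ass_{S/I_i} E$ (for $E$ a finitely generated $S/I_i$-module) iff $\depth_{S_P/I_iS_P} E_P = 0$; (c) for $E = \Ext_S^{t_i}(S/I_i,S)$ use the grade/codepth identity over the Gorenstein local ring $S_P$: $\depth E_P = 0$ exactly when $\operatorname{grade}(PS_P, S_P/I_iS_P)$-type data collapses, which via Ischebeck/local duality over Gorenstein $S_P$ translates to $\height_S P = t_i$; and (d) conversely, for every minimal prime $P$ of $I_i$ of height $t_i$, check $E_P \neq 0$ and $\operatorname{Hom}_{S_P}(\kappa(P), E_P) \neq 0$ using that $S_P/I_iS_P$ is then a Cohen–Macaulay (indeed, generic) quotient so $E_P$ is its canonical module, which has $PS_P$ associated. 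The main obstacle is step (c)/(d): one must be careful that $I_i$ need not be equidimensional away from these top-height primes, but the $\Ext^{t_i}$ only "sees" the $t_i$-codimensional part — this is exactly Sharp's point in \cite{S}, so I would quote the relevant lemma from \cite{S} on the supports and associated primes of $\Ext_S^{t}(S/J,S)$ rather than reprove local duality from scratch. Everything else is bookkeeping with comaximal decompositions and localization.
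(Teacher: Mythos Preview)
Your decomposition step and the reverse inclusion (your step (d)) are fine, but the forward inclusion has a genuine gap. The displayed equivalence ``$\Ext_{S_P}^{t_i}(S_P/I_iS_P, S_P) \neq 0$ if and only if $\dim S_P/I_iS_P = \dim S_P - t_i$'' is false: over a Gorenstein local ring $(A,\m)$ of dimension $d$ one has $\Ext_A^j(M,A)\neq 0$ throughout the range $d-\dim M \le j \le d-\depth M$, so nonvanishing at $j=t_i$ does not force $P$ to be minimal over $I_i$, only that $P$ lies above some such minimal prime. What step (c) actually requires is that $\Ext_{S_P}^{t_i}(S_P/I_iS_P,S_P)$ has positive depth whenever $\height_S P > t_i$. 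This is true---it is essentially the statement that the lowest nonvanishing $\Ext$ (the canonical module of $S/I_i$) satisfies $(S_2)$---but it is a result of comparable weight to the proposition itself, and neither Ischebeck's formula nor raw local duality hands it to you. Outsourcing it to \cite{S} turns the heart of the proof into a black box. (For contrast, note that the analogous $\Ass$-bound fails for higher $\Ext$'s: with $A=k[[x,y]]$ one has $\Ext_A^1(\m,A)\cong k$, whose unique associated prime has height $2$, not $1$.)

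The paper's route is shorter and self-contained, and it exploits precisely the feature your plan underuses: that $t_i$ is the \emph{grade} of $I_i$. Take the minimal injective resolution $0 \to S \to E^0 \to E^1 \to \cdots$; since $S$ is Gorenstein, $E^j$ involves only injective hulls $E(S/P)$ with $\height_S P = j$. No prime of height $< t_i = \height_S I_i$ contains $I_i$, so $\Hom_S(S/I_i,E^j)=0$ for all $j<t_i$, and therefore $\Ext_S^{t_i}(S/I_i,S)$ is an honest \emph{submodule} (not merely a subquotient) of $\Hom_S(S/I_i,E^{t_i})$. Its associated primes are then automatically among those of $E^{t_i}$, namely the height-$t_i$ primes in $\mathrm{V}(I_i)$. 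The reverse inclusion is as you say: any such $P$ is minimal in the support, hence associated.
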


\begin{proof}
Let 
$
0\to S\to E^0\to E^1\to \dots \to E^{j}\to \cdots
$ 
be the minimal injective resolution of $S$. 
We fix an integer $i$ with $1\le i\le n$. 
Since $\Hom_S(S/I_i, E^j)=(0)$ for all integers $j<t_i$, 
we get
the monomorphism $\Ext_S^{t_i}(S/I_i, S)\hookrightarrow\Hom_S(S/I_i, E^{t_i})$. Hence 
$$
\Ass_S\Ext_S^{t_i}(S/I_i, S)\subseteq \mathrm{V}(I_i)\cap\Ass_SE^{t_i}=\{P\mid P\in\mathrm{V}(I_i), \height_{S}P=t_i\}.
$$
This shows $\Ass_R\K_R\subseteq \bigcup_{i=1}^n\{PR\mid P\in\mathrm{V}(I_i), \height_{S}P=t_i\}$.
Conversely, let $P\in\mathrm{V}(I_i)$ such that $ \height_{S}P=t_i$. Since 
$$
\dim S_P-\dim (S/I_i)_P=\height_{S_P}(I_i)_P=t_i,
$$
we then have $P\in\Supp_S\Ext_S^{t_i}(S/I_i, S)$, while $P\in\Min_S\Ext_S^{t_i}(S/I_i, S)$. Therefore $P\in\Ass_S\Ext_S^{t_i}(S/I_i, S)\subseteq \Ass_R\K_R$, as desired. 
\end{proof}

\begin{lem}\label{localizationKR}
$(\K_R)_\p\cong \K_{R_\p}$ as an $R_\p$-module for every $\p\in\Supp_R\K_R$.
\end{lem}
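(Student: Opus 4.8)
The plan is to localize the product decomposition $R \cong S/I_1 \times \cdots \times S/I_n$ at $\p$ and to identify it, one factor at a time, with a presentation of the local ring $R_\p$ over the Gorenstein local ring $S_P$, where $P \in \Spec S$ is the prime corresponding to $\p$.

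First I would use that $\Spec R = \mathrm{V}(I) = \bigsqcup_{i=1}^n \mathrm{V}(I_i)$ is the partition into connected components, so there is a unique index $i$ with $P \in \mathrm{V}(I_i)$, and then $I_j \not\subseteq P$ for every $j \neq i$. Hence $(S/I_j)_P = 0$, so $\Ext_S^{t_j}(S/I_j,S)_P = 0$ for $j \neq i$, and, since $S$ is Noetherian,
$$
(\K_R)_\p \;\cong\; \Ext_S^{t_i}(S/I_i,S)_P \;\cong\; \Ext_{S_P}^{t_i}\!\big(S_P/I_iS_P,\, S_P\big).
$$
On the other hand $R_\p = S_P/IS_P = S_P/I_iS_P$, because localization commutes with finite intersections and $I_jS_P = S_P$ for $j \neq i$. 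As $R_\p$ is a local ring that is a homomorphic image of the Gorenstein ring $S_P$, the definition of the global canonical module applied to $R_\p$ gives $\K_{R_\p} = \Ext_{S_P}^{c}\!\big(S_P/I_iS_P,\, S_P\big)$ with $c = \height_{S_P}(I_iS_P)$. Thus everything reduces to the numerical identity $\height_{S_P}(I_iS_P) = t_i$.

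The main obstacle is exactly this height equality, and it is the only point at which the hypothesis $\p \in \Supp_R\K_R$ is needed (indeed the conclusion can genuinely fail for $\p$ outside the support). Since the summands with $j \neq i$ localize to $0$, the condition $(\K_R)_\p \neq 0$ forces $\p \in \Supp_R \Ext_S^{t_i}(S/I_i,S)$; then by Proposition \ref{AssKR}, together with the fact that the support of a finitely generated module is the union of $\mathrm{V}(\q)$ over its associated primes, there is a prime $P_0 \in \mathrm{V}(I_i)$ with $\height_S P_0 = t_i$ and $P_0 \subseteq P$. Now $I_i \subseteq P_0$ and $\height_S P_0 = t_i = \height_S I_i$ force $P_0$ to be a minimal prime of $I_i$, so $\height_{S_P}(I_iS_P) \leq \height_S P_0 = t_i$; and every minimal prime of $I_i$ has height at least $t_i$, so $\height_{S_P}(I_iS_P) \geq t_i$. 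Therefore $\height_{S_P}(I_iS_P) = t_i = c$, and comparing the two $\Ext$-descriptions above yields $(\K_R)_\p \cong \K_{R_\p}$ as $R_\p$-modules.
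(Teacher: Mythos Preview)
Your proof is correct and follows essentially the same route as the paper's proof. Both arguments isolate the unique component index $i$ with $P \in \mathrm{V}(I_i)$, invoke Proposition \ref{AssKR} to produce an associated prime of $\K_R$ of height $t_i$ lying below $P$ (your $P_0$, the paper's $Q$), and use this prime to verify the height identity $\height_{S_P}(I_iS_P)=t_i$; the isomorphism then follows by localizing the defining $\Ext$. The only cosmetic difference is that the paper records the equivalent formulation $\dim S_P-\dim(S/I_i)_P=t_i$, which matches your $\height_{S_P}(I_iS_P)=t_i$ since $S_P$ is Cohen--Macaulay.
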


\begin{proof}
Let $\p\in\Supp_R\K_R$. Then there exist an integer $i$ and $P\in\mathrm{V}(I_i)$ such that $1\le i\le n$ and $\p=PR$. Choose $\q\in\Ass_R\K_R$ such that $\q\subseteq \p$. We write $\q=QR$ for some $Q\in\mathrm{V}(I_i)$. Then $\height_{S}Q=t_i$. Since $Q\subseteq P$, we have $\height_{S_P}(I_i)_P=t_i$. Hence 
$$
\dim S_P-\dim (S/I_i)_\p=\dim S_P-\dim S_P/(I_i)_P=\height_{S_P}(I_i)_P=t_i
$$
so we obtain the isomorphisms 
$$
(\K_R)_\p\cong \Ext_S^{t_i}(S/I_i, S)_\p\cong \Ext_{S_P}^{t_i}((S/I_i)_\p, S_P)\cong\K_{((S/I_i)_\p)}\cong\K_{(R_\p)}
$$
of $R_\p$-modules. 
\end{proof}

Let us now state the first main theorem of this section.

\begin{thm}\label{q-unmixed}
Suppose that $R$ is locally quasi-unmixed. Then $$\Ass_R\K_R=\Min R.$$ Hence $(\K_R)_\p\cong \K_{R_\p}$ as an $R_\p$-module for every $\p\in\Spec R$.
\end{thm}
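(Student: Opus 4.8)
The plan is to reduce the assertion $\Ass_R \K_R = \Min R$ to the already-established description of $\Ass_R \K_R$ in Proposition~\ref{AssKR}, using the quasi-unmixedness hypothesis to match the two sets. First I would recall that by Proposition~\ref{AssKR},
$$
\Ass_R \K_R = \bigcup_{i=1}^n \{ PR \mid P \in \mathrm{V}(I_i),\ \height_S P = \height_S I_i \},
$$
and that under the isomorphism $R \cong S/I_1 \times \cdots \times S/I_n$ the minimal primes of $R$ split up as $\Min R = \bigsqcup_{i=1}^n \Min (S/I_i)$, with $\Min(S/I_i)$ corresponding to $\{ PR \mid P \in \Min_S \mathrm{V}(I_i) \}$. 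So it suffices to prove, for each fixed $i$, that a prime $P \supseteq I_i$ satisfies $\height_S P = \height_S I_i$ if and only if $PR \in \Min(S/I_i)$, i.e. if and only if $P$ is a minimal prime over $I_i$. The ``if'' direction is where quasi-unmixedness enters; the ``only if'' direction is automatic, since $P/I_i$ having height $0$ in $S/I_i$ forces $\height_S P = \height_S I_i$ by the catenary property of the Gorenstein (hence Cohen-Macaulay, hence catenary) ring $S$ — actually this needs a word of care and I address it below.

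The heart of the argument: fix $i$ and let $P \in \mathrm{V}(I_i)$ with $\height_S P = \height_S I_i = t_i$. I want to conclude $P \in \Min_S(S/I_i)$. Suppose not; then there is a prime $Q$ with $I_i \subseteq Q \subsetneq P$, and $Q$ may be taken minimal over $I_i$. Localize at $P$: in the Cohen-Macaulay local ring $S_P$ we have $\height_{S_P} Q S_P + \dim (S/Q)_P = \dim S_P$, and similarly $\height_{S_P} (I_i)_P + \dim(S/I_i)_P = \dim S_P$ since $S_P$ is catenary and equidimensional. Now $(S/I_i)_P$ is a localization of $S/I_i$, and quasi-unmixedness of $R$ (hence of each $S/I_i$, as $R_{PR} \cong (S/I_i)_P$) means $(S/I_i)_P$ is quasi-unmixed, so all its minimal primes — in particular $QS_P/(I_i)_P$ — have the same coheight, equal to $\dim (S/I_i)_P$. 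Therefore $\dim(S/Q)_P = \dim(S/I_i)_P$, whence $\height_{S_P} QS_P = \height_{S_P}(I_i)_P = t_i$. But $Q S_P \subsetneq P S_P$ forces $\height_{S_P} P S_P > t_i$, i.e. $\height_S P > t_i$, a contradiction. Hence $P$ is minimal over $I_i$, and $PR \in \Min(S/I_i) \subseteq \Min R$. The reverse inclusion $\Min R \subseteq \Ass_R \K_R$: if $\p = PR \in \Min(S/I_i)$ then $P$ is minimal over $I_i$, so $\dim S_P = \dim S_P/(I_i)_P + \height_{S_P}(I_i)_P = 0 + t_i'$ where $t_i' = \height_{S_P}(I_i)_P$; and quasi-unmixedness (equidimensionality of $(S/I_i)_P$, which is then $0$-dimensional, trivially) combined with the equality $\height_S I_i = \min_P \height_S P$ over minimal primes gives $\height_S P = t_i$, so $PR \in \Ass_R \K_R$ by Proposition~\ref{AssKR}.

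The last sentence of the theorem, that $(\K_R)_\p \cong \K_{R_\p}$ for every $\p \in \Spec R$, then follows from Lemma~\ref{localizationKR}: that lemma gives the isomorphism for $\p \in \Supp_R \K_R$, and $\Supp_R \K_R \supseteq \Min R = \Min(\Supp_R R)$ forces $\Supp_R \K_R = \Spec R$ (a module whose support contains all minimal primes of a Noetherian ring has support everything, since every prime contains a minimal one and support is closed under specialization); for $\p \notin \Supp_R \K_R$ there would be nothing to prove, but this case is vacuous. I expect the main obstacle to be bookkeeping with the decomposition $R \cong \prod S/I_i$ and making sure the height comparisons in $S$ versus in the localizations are handled cleanly — in particular, justifying that quasi-unmixedness of $R$ passes to quasi-unmixedness of the localizations $(S/I_i)_P$ and translating that into equidimensionality statements about $S_P$ modulo $(I_i)_P$, which is exactly what lets the ``$\height_S P = t_i$'' condition detect minimality over $I_i$. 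The relevant transfer (quasi-unmixed $\Rightarrow$ the completion is equidimensional $\Rightarrow$ catenary behavior of heights) is standard and can be cited from Ratliff or Matsumura, as the paper already does elsewhere.
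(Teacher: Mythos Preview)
Your proposal has a genuine gap in the direction $\Min R \subseteq \Ass_R \rmK_R$, i.e., in showing that every minimal prime $P$ over $I_i$ satisfies $\height_S P = t_i$. You correctly identify this as the direction where quasi-unmixedness must enter, but your argument there does not work: you localize at the minimal prime $P$ itself, observe that $(S/I_i)_P$ is then $0$-dimensional so quasi-unmixedness is ``trivially'' satisfied, and then assert $\height_S P = t_i$ using only $\height_S I_i = \min_{P'} \height_S P'$. That minimum only gives $\height_S P \ge t_i$, not equality; quasi-unmixedness of an Artinian ring carries no information, so nothing in your argument excludes $I_i$ having minimal primes of different heights. (Your ``heart of the argument'' paragraph actually proves the \emph{other} direction---$\height_S P = t_i$ implies $P$ minimal over $I_i$---which, as you yourself note, is automatic from the Cohen--Macaulay property of $S$ and did not need quasi-unmixedness at all.)

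The missing idea, and what the paper does, is to exploit the fact that $\mathrm{V}(I_i) = C_i$ is a \emph{connected} component. Given a minimal prime $Q$ of $I_i$ and a minimal prime $Q'$ with $\height_S Q' = t_i$, connectedness produces a chain $Q = Q_1, \ldots, Q_{\ell+1} = Q'$ of minimal primes of $I_i$ with $Q_j + Q_{j+1} \ne S$ for each $j$. One then chooses a maximal ideal $P \supseteq Q_j + Q_{j+1}$; now $(S/I_i)_P \cong R_{PR}$ is quasi-unmixed of \emph{positive} dimension, so its equidimensionality gives $\dim(S/Q_j)_P = \dim(S/Q_{j+1})_P$, and the Cohen--Macaulay formula in $S_P$ converts this into $\height_S Q_j = \height_S Q_{j+1}$. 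Stepping along the chain yields $\height_S Q = \height_S Q' = t_i$. The point is that quasi-unmixedness must be invoked at a prime that sees at least two minimal primes of $I_i$ simultaneously; localizing at a single minimal prime can never do this.
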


\begin{proof}
Let $i$ be an integer with $1\le i\le n$. It suffice, by Proposition \ref{AssKR}, to show that
$$
\Min_S S/I_i\subseteq \{P\in\mathrm{V}(I_i)\mid \height_{S}P=\height_SI_i\}.
$$
Take $Q\in\Min_SS/I_i$. Let $Q'\in\Min_SS/I_i$ such that $\height_SQ'=\height_SI_i$. We shall show $\height_{S}Q=\height_SI_i$. Since $Q, Q'\in\mathrm{V}(I_i)$, there exist an integer $\ell >0$ and a sequence $Q_1, Q_2, \dots , Q_{\ell+1}\in\mathrm{V}(I_i)$ such that $Q_1=Q$, $Q_{\ell+1}=Q'$, and $Q_j+Q_{j+1}\neq S$ for all $1\le j\le \ell$. We may assume $Q_1, Q_2, \dots , Q_{\ell+1}\in\Min_S S/I_i$. We fix an integer $j$ with $1\le j\le \ell$. 
There exists $P\in\Max S$ such that $Q_j+Q_{j+1}\subseteq P$, and then $P\in\mathrm{V}(I_i)$. Since $R_P\cong (S/I_i)_P$ and $R$ is locally quasi-unmixed, we have $\height_{S/Q_j}P/Q_j=\height_{S/Q_{j+1}}P/Q_{j+1}$, so that $\height_{S}Q_j=\height_{S}Q_{j+1}$ because $S$ is a Cohen-Macaulay ring. Consequently, $\height_{S}Q=\height_{S}Q'$, as wanted. The last assertion follows from Lemma \ref{localizationKR}.
\end{proof}

\begin{lem}\label{endKR}
The endomorphism ring $\Hom_R(\K_R,\K_R)$ is commutative.
\end{lem}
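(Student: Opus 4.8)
The plan is to realize $\Hom_R(\K_R,\K_R)$ as a subring of a finite direct product of commutative rings --- the localizations $R_\p$ of $R$ at certain minimal primes --- which forces it to be commutative.

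First I would control the associated primes. Since $\K_R$ is a finitely generated module over the Noetherian ring $R$, a finite presentation $R^a\to R^b\to\K_R\to 0$ yields, upon applying $\Hom_R(-,\K_R)$, an embedding $\Hom_R(\K_R,\K_R)\hookrightarrow\K_R^{\,b}$, so that $\Ass_R\Hom_R(\K_R,\K_R)\subseteq\Ass_R\K_R$. By Proposition \ref{AssKR} every $\p\in\Ass_R\K_R$ has the form $\p=PR$ with $P\in\rmV(I_i)$ and $\height_SP=\height_SI_i$ for some $i$. Since $\height_SI_i=\min\{\height_SQ\mid Q\in\rmV(I_i)\}$ and, $S$ being Noetherian, $Q\subsetneq P$ forces $\height_SQ<\height_SP$, such a $P$ is a minimal prime over $I_i$; hence $\p=PR$ is a minimal prime of $R$. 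Writing $\Sigma=\Ass_R\Hom_R(\K_R,\K_R)$, we conclude that $\Sigma$ is a finite set of minimal primes of $R$.

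Next I would build the embedding. The canonical ring homomorphism
\[
\Hom_R(\K_R,\K_R)\longrightarrow\prod_{\p\in\Sigma}\Hom_R(\K_R,\K_R)_\p
\]
is injective, since a nonzero element of its kernel would, being a submodule, possess an associated prime lying in $\Sigma$ while vanishing after localization at every prime of $\Sigma$. Because $\K_R$ is finitely presented, $\Hom_R(\K_R,\K_R)_\p\cong\Hom_{R_\p}\!\big((\K_R)_\p,(\K_R)_\p\big)$ as $R_\p$-algebras. Fix $\p\in\Sigma$; then $R_\p$ is Artinian local (as $\p$ is a minimal prime of $R$), and since $\p\in\Ass_R\K_R\subseteq\Supp_R\K_R$, Lemma \ref{localizationKR} gives $(\K_R)_\p\cong\K_{R_\p}$, the canonical module of $R_\p$. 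For an Artinian local ring $A$ the canonical module is the Matlis dual $\Hom_k(A,k)$, and, $A$ being complete, the homothety map $A\to\Hom_A\!\big(\Hom_k(A,k),\Hom_k(A,k)\big)$ is an isomorphism of rings; hence $\Hom_R(\K_R,\K_R)_\p\cong R_\p$ as rings. Therefore $\Hom_R(\K_R,\K_R)$ is isomorphic to a subring of the commutative ring $\prod_{\p\in\Sigma}R_\p$, and is thus commutative.

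The part that genuinely exploits the general setting (rather than the familiar local Cohen--Macaulay picture) is the identification of $\Ass_R\K_R$ with minimal primes via Proposition \ref{AssKR}, together with the compatibility of $\K_R$ with localization at those primes, i.e.\ Lemma \ref{localizationKR}; everything else is formal, and I expect no real obstacle beyond this bookkeeping. As an equivalent alternative one could first decompose $\K_R=\bigoplus_{i=1}^n\Ext_S^{t_i}(S/I_i,S)$ and note that, since $I_i+I_j=S$ for $i\neq j$, the summands have pairwise disjoint supports, whence $\Hom_R(\K_R,\K_R)\cong\prod_{i=1}^n\End_{S/I_i}\!\big(\Ext_S^{t_i}(S/I_i,S)\big)$ and it suffices to treat the case $n=1$; but the argument above already handles all $n$ simultaneously.
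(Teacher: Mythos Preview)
Your argument is correct and follows essentially the same route as the paper: both embed $\Hom_R(\K_R,\K_R)$ into a localization of $R$ at the (minimal) associated primes of $\K_R$, using that over the resulting Artinian local rings the endomorphism ring of the canonical module is the ring itself. The paper states this in one line via the multiplicative set $T=R\setminus\bigcup_{\q\in\Ass_R\K_R}\q$ (so that $T^{-1}R\cong\prod_{\q}R_\q$), whereas you spell out the product decomposition and the Matlis-duality step explicitly; the content is the same.
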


\begin{proof}
Set $T=R\setminus \bigcup_{\q\in\Ass_R\K_R}\q$. The ring $\Hom_R(\K_R,\K_R)$ is naturally isomorphic to a subring of the commutative ring $T^{-1}R$.
\end{proof}

Let $\varphi:R\to \Hom_R(\K_R,\K_R)$ be the natural homomorphism of rings. We denote by 
$
(0)=\bigcap_{\q\in\Ass R}Q(\q)
$
the primary decomposition of $(0)$ in $R$, and set 
$$
U=\bigcap_{\q\in\Ass_R \K_R}Q(\q).
$$

With this notation we have the following.

\begin{lem}\label{U}
The equality $U=(0):_R\K_R$ holds. Hence, one has $U=\Ker\varphi$.
\end{lem}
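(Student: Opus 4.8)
The goal is to show $U = (0):_R \K_R$, where $U = \bigcap_{\q \in \Ass_R \K_R} Q(\q)$ is the intersection of those primary components of $(0)$ in $R$ whose associated primes lie in $\Ass_R \K_R$. The cleanest approach is to argue that each side is characterized as the largest submodule of $R$ (equivalently, the ideal $\fkb$) with $\Ass_R(R/\fkb) \subseteq \Ass_R \K_R$, and that both $U$ and $(0):_R\K_R$ satisfy this. The plan is to verify three things: (i) $\Ass_R(R/U) = \Ass_R \K_R$; (ii) $\Ass_R\big(R/((0):_R\K_R)\big) \subseteq \Ass_R \K_R$; and (iii) $(0):_R\K_R \subseteq U$. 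Then, since $U$ is a "widest" ideal with associated primes contained in $\Ass_R\K_R$ and $(0):_R\K_R$ also has that property, the reverse inclusion $U \subseteq (0):_R\K_R$ follows, giving equality. The final sentence, $U = \Ker\varphi$, is then immediate because $\Ker\varphi = (0):_R\K_R$: a ring element kills $\K_R$ exactly when it acts as the zero endomorphism.

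For step (i), this is a standard fact about primary decomposition: if $(0) = \bigcap_{\q \in \Ass R} Q(\q)$ is an irredundant primary decomposition, then for any subset $\Sigma \subseteq \Ass R$ the ideal $\bigcap_{\q \in \Sigma} Q(\q)$ has associated primes exactly $\Sigma$. Applying this with $\Sigma = \Ass_R\K_R$ (which is indeed a subset of $\Ass R$, since by Proposition \ref{AssKR} every associated prime of $\K_R$ is a minimal prime of some $I_i$, hence a minimal — thus associated — prime of $R$) gives $\Ass_R(R/U) = \Ass_R\K_R$.

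For step (ii): localize. For $\p \in \Ass_R\big(R/((0):_R\K_R)\big)$, we have $\p \in \Supp_R\K_R$ (since $(0):_R\K_R = \Ann_R\K_R$), and after localizing at $\p$, the ideal $(0):_{R_\p}(\K_R)_\p$ has $\p R_\p$ as an associated — in fact the maximal — prime, so $\depth_{R_\p}(\K_R)_\p = 0$, i.e. $\p R_\p \in \Ass_{R_\p}(\K_R)_\p$. By Lemma \ref{localizationKR}, $(\K_R)_\p \cong \K_{R_\p}$, and since the canonical module of a local ring has depth at least $\min\{1,\dim\}$... more carefully: $\p R_\p \in \Ass \K_{R_\p}$ forces $\dim R_\p = 0$ by the standard property that $\Ass \K_{R_\p} = \{\text{minimal primes of }R_\p\text{ of maximal dimension}\}$ — actually one only needs $\p R_\p \in \Ass\K_{R_\p} \subseteq \Ass R_\p$, which lifts to $\p \in \Ass R$, and then $\p \in \Ass_R\K_R$ by Proposition \ref{AssKR} once one checks $\p$ has the right height. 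This is the step I expect to be the main obstacle: pinning down precisely why a prime in $\Ass(R/\Ann\K_R)$ must land in $\Ass\K_R$ rather than merely in $\Supp\K_R$, and handling the height bookkeeping so that Proposition \ref{AssKR}'s description applies. Step (iii) is comparatively easy: $(0):_R\K_R$ annihilates $\K_R$, hence annihilates the nonzero submodule corresponding to each $Q(\q)$-component for $\q \in \Ass\K_R$... one shows $(0):_R\K_R \subseteq Q(\q)$ for each such $\q$ by a local argument at $\q$, since $(\K_R)_\q \ne 0$ means $\Ann(\K_R)_\q \subseteq \q R_\q$-primary component of $(0)$ in $R_\q$. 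Intersecting over $\q \in \Ass_R\K_R$ yields $(0):_R\K_R \subseteq U$. Combining (i)–(iii) closes the argument.
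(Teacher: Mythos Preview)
Your combining step has a genuine gap. The claim that $U$ is the \emph{largest} ideal $\fkb$ with $\Ass_R(R/\fkb)\subseteq\Ass_R\K_R$ is false: the unit ideal $\fkb=R$ satisfies it vacuously, and more substantively, in $R=k[x]/(x^2)$ with $\Ass_R\K_R=\{(x)\}$ one has $U=(0)$ while $\fkb=(x)$ also satisfies $\Ass_R(R/\fkb)\subseteq\{(x)\}$. So no ``widest ideal'' characterization singles out $U$, and even if it did, the logic would run the wrong way: if $U$ were maximal with this property and $(0):_R\K_R$ also had it, you would conclude $(0):_R\K_R\subseteq U$, which is your (iii) again, not the missing inclusion $U\subseteq(0):_R\K_R$.

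Your ingredients (ii) and (iii) \emph{can} be salvaged into a proof of $U\subseteq(0):_R\K_R$, but only via the same localization trick you avoided: assuming (iii), if $U/\bigl((0):_R\K_R\bigr)\neq 0$, pick $\p$ in its $\Ass$; by (ii) $\p\in\Ass_R\K_R$, hence $\p$ is minimal (Proposition~\ref{AssKR}), so $U_\p\subseteq Q(\p)_\p=(0)$, a contradiction. The paper does this inclusion more directly and without needing (ii) or (iii) first: if $U\K_R\neq(0)$, any $\p\in\Ass_R(U\K_R)\subseteq\Ass_R\K_R$ is minimal, whence $U_\p=(0)$ and $(U\K_R)_\p=(0)$, contradiction. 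For the reverse inclusion the paper argues essentially as in your (iii), but note a small gap there too: from $(\K_R)_\q\neq 0$ alone you only get $\Ann_{R_\q}(\K_R)_\q\subsetneq R_\q$, not $=0$. What is actually used (and what the paper invokes) is that $\q\in\Ass_R\K_R$ forces $\dim R_\q=0$, and the canonical module of an Artinian local ring is faithful, so $(0):_{R_\q}\K_{R_\q}=(0)$.
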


\begin{proof}
Suppose $U\K_R\neq (0)$ and take $\p\in\Ass_R U\K_R$. Since $\p\in\Ass_R \K_R\subseteq \Min R$, we get $U_\p=(0)$ and hence $(U\K_R)_\p=(0)$. This is absurd. Therefore $U\K_R= (0)$, so that $U\subseteq (0):_R\K_R$.
Suppose $U\subsetneq (0):_R\K_R$ and seek a contradiction. Let $\p\in\Ass_R [(0):_R\K_R]/U$. Since $\p\in\Ass_R R/U$, we get $\p\in\Ass_R\K_R$, so that
$(\K_R)_\p\cong\K_{(R_\p)}$. Since $\dim R_\p=0$, we have $(0):_{R_\p}\K_{(R_\p)}=(0)$, whence $[(0):_R\K_R]_\p=(0)$. This makes a contradiction. Therefore $(0):_R\K_R=U$, as wanted. 
\end{proof}

Hence, if $\Ass R=\Ass_R\K_R$, then the canonical map $\varphi:R\to \Hom_R(\K_R,\K_R)$ is injective and $\mathrm{Q}(R)=T^{-1}R$, where $T=R\setminus \bigcup_{\q\in\Ass_R\K_R}\q$. Therefore, $\Hom_R(\K_R,\K_R)$ is naturally isomorphic to a birational extension of $R$.

\begin{Example}
We maintain the same notation as in Example \ref{ex3}. Since $\rmK_R\cong S/U_1\oplus S/I_2$ as an $R$-module, we have 
$$
\Hom_R(\K_R,\K_R)\cong S/U_1\times S/I_2
$$ 
as a ring, which is a Gorenstein ring. The ideal $U=(0):_R\K_R$ coincides with the nonzero ideal $(U_1\cap I_2)R$ of $R$, so the canonical map $\varphi$ is not injective.
\end{Example}

From now on, let us consider the finite generation of the $R$-module $\widetilde{M}$. We begin with the following.

\begin{lem}\label{abKR}
Let $a,b\in \mathrm{W}(R)$. If $\height_R(a,b)\ge 2$, then the sequence $a,b$ is $\K_R$-regular.
\end{lem}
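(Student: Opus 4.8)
The statement to prove is that for $a,b\in\mathrm{W}(R)$ with $\height_R(a,b)\ge 2$, the sequence $a,b$ is $\K_R$-regular. My plan is to reduce this to the well-known fact that the canonical module of a Noetherian local ring satisfies $(S_2)$, together with the local-global comparison $(\K_R)_\p\cong\K_{R_\p}$ established in Lemma \ref{localizationKR} (and, when $R$ is locally quasi-unmixed, in Theorem \ref{q-unmixed}, though here no quasi-unmixedness hypothesis is stated, so Lemma \ref{localizationKR} is the tool to use on the support of $\K_R$). The key algebraic input is that $\K_R$ satisfies the condition $(S_2)$ of Serre as an $R$-module; granting this, Lemma \ref{sequence} applies verbatim. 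Indeed, by Lemma \ref{sequence}(2), a finitely generated module satisfying $(S_2)$ has the property that every sequence $a_1,a_2$ with $\height_R(a_1)\ge 1$ and $\height_R(a_1,a_2)\ge 2$ is regular on it; since $a\in\mathrm{W}(R)$ forces $\height_R(a)\ge 1$ (as $a$ avoids all associated primes, hence all minimal primes) and $\height_R(a,b)\ge 2$ by hypothesis, the conclusion follows immediately.

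So the work is concentrated in verifying that $\K_R$ satisfies $(S_2)$. First I would recall that $\K_R=\bigoplus_{i=1}^n\Ext_S^{t_i}(S/I_i,S)$ with $t_i=\height_S I_i$, so it suffices to treat each summand $\omega_i:=\Ext_S^{t_i}(S/I_i,S)$ separately, since a finite direct sum satisfies $(S_2)$ iff each summand does (on its own support). Fix $\p\in\Supp_R\omega_i$; we must show $\depth_{R_\p}(\omega_i)_\p\ge\min\{2,\dim R_\p\}$. By the localization computation already carried out in the proof of Lemma \ref{localizationKR}, $(\omega_i)_\p\cong\Ext_{S_P}^{t_i}((S/I_i)_P,S_P)$ where $P$ is the prime of $S$ lying over $\p$, and since $\height_{S_P}(I_i)_P=t_i$ this is precisely the canonical module $\K_{(S/I_i)_P}=\K_{R_\p}$ of the Noetherian local ring $R_\p$ (using that $R_\p\cong(S/I_i)_\p$ because the $I_j$ are comaximal and $\p$ lies in $\rmV(I_i)$ only). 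Then I invoke the standard fact — e.g.\ from Bruns--Herzog or Aoyama's work on canonical modules — that the canonical module of a Noetherian local ring which is a homomorphic image of a Gorenstein ring satisfies $(S_2)$: concretely, $\depth\K_{R_\p}\ge\min\{2,\dim R_\p\}$, and more precisely $\K_{R_\p}$ is unmixed of dimension $\dim R_\p$ with depth at least $2$ whenever $\dim R_\p\ge 2$, and is Cohen--Macaulay of dimension $\le 1$ otherwise. Feeding this back gives $(S_2)$ for $\omega_i$, hence for $\K_R$.

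An alternative, more self-contained route avoids quoting $(S_2)$ of canonical modules and instead argues directly with $\Ext$. One shows $\depth_{S_P}\Ext_{S_P}^{t_i}((S/I_i)_P,S_P)\ge\min\{2,\dim S_P - t_i\}$ by a standard depth-sensitivity argument: because $S_P$ is Gorenstein (hence Cohen--Macaulay with finite injective dimension), local duality gives $\Ext_{S_P}^{t_i}((S/I_i)_P,S_P)$ as the Matlis dual of the top local cohomology $\H^{\dim(S/I_i)_P}_{\m}((S/I_i)_P)$, and the depth of a canonical module of an unmixed (or at least $t_i$-equidimensional) quotient is controlled this way. Either way, the crux — and the one step that requires genuine care rather than formula-chasing — is the identification $(\K_R)_\p\cong\K_{R_\p}$ together with the grade/height bookkeeping $\height_{S_P}(I_i)_P=t_i$ needed to make that identification valid; fortunately this has already been done in Lemma \ref{localizationKR}, so in the write-up the proof reduces to: decompose into summands, localize via Lemma \ref{localizationKR}, quote $(S_2)$ for local canonical modules, and close with Lemma \ref{sequence}. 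I expect no real obstacle beyond assembling these pieces cleanly and making sure the height-one-versus-height-$\ge 2$ case distinction in $\min\{2,\dim R_\p\}$ is handled, which is exactly what $a\in\mathrm{W}(R)$ and $\height_R(a,b)\ge 2$ are designed to accommodate.
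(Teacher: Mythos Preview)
Your proof is correct and uses the same key ingredient as the paper: localize via Lemma~\ref{localizationKR} and invoke that the canonical module of a local ring satisfies $\depth\ge\min\{2,\dim\}$. The paper's version is just more direct---instead of first proving $(S_2)$ for $\K_R$ and then applying Lemma~\ref{sequence}, it argues by contradiction: if $b$ were a zerodivisor on $\K_R/a\K_R$, pick $\p\in\Ass_R(\K_R/a\K_R)$ with $b\in\p$, so $\depth_{R_\p}(\K_R)_\p=1$, whence $\dim R_\p=1$ by the local depth bound for $\K_{R_\p}\cong(\K_R)_\p$, contradicting $\height_R\p\ge 2$.
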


\begin{proof}
As $\Ass_R \K_R\subseteq \Ass_R R$, we see that $a \in \rmW(\rmK_R)$. Suppose $b$ is a zerodivisor on $\K_R/a\K_R$. Then $\K_R/a\K_R\neq (0)$ and there exists $\p\in\Ass_R \K_R/a\K_R$ such that $b\in\p$. Since $\depth_{R_\p}(\K_R/a\K_R)_\p=0$, we have $\depth_{R_\p}\K_{R_\p}=1$. Therefore $\dim R_\p=1$, because $\K_{R_\p}$ is the ordinary canonical module of $R_\p$. This contradicts that $\height_R\,\p\ge 2$. Hence the sequence $a,b$ is $\K_R$-regular.
\end{proof}

Let $(-)^\vee=\Hom_R(-,\K_R)$ denote the $\K_R$-dual. For a finitely generated $R$-module $M$, by Proposition \ref{AssKR},  we have
$$
\Ass_R\,M^\vee =\Supp_R M\cap\Ass_R \K_R=\bigcup_{i=1}^n\{PR\mid P\in\mathrm{V}(I_i), \height_{S}P=\height_{S}I_i, M_P\neq (0)\},
$$
so that $M^\vee$ is a torsion-free $R$-module since $\Ass_R\,M^\vee\subseteq \Ass R$.
Moreover $\dim M_\p=\dim R_\p$ for every $\p\in\Supp_RM^\vee$.

\begin{lem}\label{identify}
Suppose that $\Ass R=\Ass_R\K_R$. Let $M$ be a finitely generated torsion-free $R$-module. Then $M^{\vee\vee}$ is naturally isomorphic to an $R$-submodule of $\mathrm{Q}(R)\otimes_RM$ containing $M$.
\end{lem}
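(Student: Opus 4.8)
The plan is to construct an explicit natural map from $M^{\vee\vee}$ into $\mathrm{Q}(R)\otimes_R M$ and verify it is injective and extends the canonical map $M \to M^{\vee\vee}$. First I would recall the standing hypothesis $\Ass R = \Ass_R \K_R$, under which the multiplicative set $T = R \setminus \bigcup_{\q \in \Ass_R \K_R}\q = R \setminus \bigcup_{\q \in \Ass R}\q$ satisfies $T^{-1}R = \mathrm{Q}(R)$ (as noted just before Lemma \ref{abKR} and in the discussion after Lemma \ref{U}), so that $\mathrm{Q}(R)\otimes_R M = T^{-1}M$. Since $\Ass_R M^\vee \subseteq \Ass R$ by the formula displayed before the statement, $M^\vee$ is torsion-free, and likewise $M^{\vee\vee} = (M^\vee)^\vee$ is torsion-free; hence the localization map $M^{\vee\vee} \to T^{-1}(M^{\vee\vee})$ is injective. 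So it suffices to identify $T^{-1}(M^{\vee\vee})$ naturally with $T^{-1}M = \mathrm{Q}(R)\otimes_R M$ compatibly with the biduality map.

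The key computation is that localization commutes with $\Hom$ here and that $\K_R$ becomes trivial after inverting $T$. Concretely, for each $\q \in \Ass_R\K_R \subseteq \Min R$ Theorem \ref{q-unmixed} (or Lemma \ref{localizationKR}) gives $(\K_R)_\q \cong \K_{R_\q}$, and $R_\q$ is an Artinian local ring, so $\K_{R_\q}$ is the injective hull of its residue field and in particular a faithfully injective $R_\q$-module of finite length. Thus $T^{-1}\K_R$ is an injective $T^{-1}R$-module, and more is true: over the semilocal Artinian ring $T^{-1}R = \mathrm{Q}(R)$, the functor $\Hom_{\mathrm{Q}(R)}(-, T^{-1}\K_R)$ is a (Matlis) duality on finite-length modules. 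Applying $T^{-1}(-)$ to $M^\vee = \Hom_R(M,\K_R)$ and using that $M$ is finitely generated yields $T^{-1}(M^\vee) \cong \Hom_{\mathrm{Q}(R)}(T^{-1}M, T^{-1}\K_R)$, and iterating, $T^{-1}(M^{\vee\vee}) \cong \Hom_{\mathrm{Q}(R)}(\Hom_{\mathrm{Q}(R)}(T^{-1}M, T^{-1}\K_R), T^{-1}\K_R)$. Because $M$ is torsion-free, $T^{-1}M$ is a finite-length $\mathrm{Q}(R)$-module (being a submodule of a free module localized at $T$, it sits inside $\mathrm{Q}(R)^{\oplus r}$, which has finite length), so the Matlis-type biduality map $T^{-1}M \to T^{-1}(M^{\vee\vee})$ is an isomorphism. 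This gives the desired natural identification $M^{\vee\vee} \hookrightarrow T^{-1}(M^{\vee\vee}) \cong T^{-1}M = \mathrm{Q}(R)\otimes_R M$, and chasing the canonical map $M \to M^{\vee\vee}$ through the isomorphism shows it agrees with $M \to T^{-1}M$, so $M \subseteq M^{\vee\vee}$ inside $\mathrm{Q}(R)\otimes_R M$.

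The main obstacle is making the duality over $\mathrm{Q}(R)$ completely rigorous: $\mathrm{Q}(R)$ is a finite product of Artinian local rings $R_\q$ (one for each $\q \in \Ass R = \Min R \cap \Ass R$), and on each factor $T^{-1}\K_R$ restricts to $\K_{R_\q}$, the Matlis dualizing module; one must check that $\Hom$ distributes over this finite product and that $\K_{R_\q}$ genuinely represents Matlis duality for finite-length $R_\q$-modules — this is where $\K_{R_\q} \cong \mathrm{E}_{R_\q}(R_\q/\q R_\q)$ for Artinian local $R_\q$ is used. A minor secondary point is verifying naturality of all the isomorphisms in $M$, which is routine since each arises from standard adjunction/localization natural transformations. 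Once these are in place, torsion-freeness of $M^{\vee\vee}$ (already recorded) supplies the injectivity of $M^{\vee\vee} \to T^{-1}(M^{\vee\vee})$ and finishes the argument.
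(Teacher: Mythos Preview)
Your proposal is correct and follows essentially the same route as the paper: both argue that the biduality map $\psi\colon M\to M^{\vee\vee}$ becomes an isomorphism after tensoring with $\mathrm{Q}(R)$, and then use torsion-freeness of $M^{\vee\vee}$ to embed it in $\mathrm{Q}(R)\otimes_R M^{\vee\vee}\cong \mathrm{Q}(R)\otimes_R M$. The paper compresses the justification of the key isomorphism into the single remark that $\dim R_\q=0$ and $(\K_R)_\q\cong\K_{R_\q}$ for $\q\in\Ass R$, whereas you unpack this as Matlis duality over the Artinian ring $\mathrm{Q}(R)$; these are the same argument at different levels of detail.
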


\begin{proof}
Let $\psi:M\to M^{\vee\vee}$ denote the natural homomorphism of $R$-modules. Since $\Ass R=\Ass_R\K_R$, we have $\dim R_\q=0$ and $(\K_R)_\q\cong\K_{(R_\q)}$ for all $\q\in\Ass R$, so that $\mathrm{Q}(R)\otimes_R\psi$ is an isomorphism of $\mathrm{Q}(R)$-modules. Thus we have a commutative diagram
$$
\xymatrix@M=10pt{
M \ar@{_{(}->}@<0.3ex>[d]^i \ar[r]^-{\psi } & M^{\vee\vee}\ar@{_{(}->}@<0.3ex>[d]^h  \\
\mathrm{Q}(R)\otimes_RM  \ar[r]^-{\sim }_-{\mathrm{Q}(R)\otimes_R\psi}  &\mathrm{Q}(R)\otimes_RM^{\vee\vee}\ar@{}[lu]|{\circlearrowright}
}
$$
of $R$-modules, where the vertical monomorphisms are canonical (recall that both $M$ and $M^{\vee\vee}$ are torsion-free $R$-modules). We set $\rho=(\mathrm{Q}(R)\otimes_R\psi)^{-1}\circ h.$ Then $M^{\vee\vee}\cong \im \rho$ as an $R$-module and $\im \rho$ is an $R$-submodule of $\mathrm{Q}(R)\otimes_RM$. Hence $M^{\vee\vee}$ is naturally isomorphic to an $R$-submodule of $\mathrm{Q}(R)\otimes_RM$ which contains $M$. 
\end{proof}

We are now in a position to state the second main theorem.

\begin{thm}\label{finiteness1}
Suppose that $\Ass R=\Ass_R\K_R$. Let $M$ be a finitely generated torsion-free $R$-module. Then $\widetilde{M}$ is a finitely generated $R$-module. In addition, the equality $\widetilde{M}=M^{\vee\vee}$ holds in $\mathrm{Q}(R)\otimes_RM$.
\end{thm}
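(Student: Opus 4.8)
The plan is to establish both inclusions $\widetilde{M}\subseteq M^{\vee\vee}$ and $M^{\vee\vee}\subseteq\widetilde{M}$ inside $V=\mathrm{Q}(R)\otimes_RM$, where $M^{\vee\vee}$ is regarded as an $R$-submodule of $V$ containing $M$ via Lemma \ref{identify}; the finite generation of $\widetilde{M}$ then follows at once, since $M^{\vee}=\Hom_R(M,\K_R)$ and hence $M^{\vee\vee}$ are finitely generated over the Noetherian ring $R$. First I would record the consequences of the hypothesis: since $\Ass_R\K_R\subseteq\Min R$ by Proposition \ref{AssKR} while $\Min R\subseteq\Ass R$ always, the equality $\Ass R=\Ass_R\K_R$ forces $\Ass R=\Min R$, so $R$ satisfies $(S_1)$; moreover $\Supp_R\K_R$ is closed and contains $\Min R$, hence equals $\Spec R$, and therefore $(\K_R)_\p\cong\K_{R_\p}$ for every $\p\in\Spec R$ by Lemma \ref{localizationKR}. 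In particular, for $\p$ with $\height_R\p\le 1$ the ring $R_\p$ is Artinian (if $\height_R\p=0$) or a one-dimensional Cohen--Macaulay local ring (if $\height_R\p=1$, since $(S_1)$ forces $\depth R_\p\ge 1$), and in either case carries a canonical module.

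The key step is the claim that \emph{for every finitely generated $R$-module $L$ the module $L^{\vee}=\Hom_R(L,\K_R)$ satisfies $(S_2)$.} I would verify this after localizing at a prime $\p\in\Supp_RL^{\vee}$, which, using $(\K_R)_\p\cong\K_{R_\p}$ and the fact that $\Hom$ commutes with localization for finitely generated arguments, reduces us to proving $\depth_RL^{\vee}\ge\min\{2,\dim R\}$ when $R$ is local with canonical module $\omega=\K_R$; this is vacuous for $\dim R=0$. For $\dim R\ge 1$, choose a finite free presentation $F_1\to F_0\to L\to 0$, set $0\to K\to F_0\to L\to 0$, and apply $\Hom_R(-,\omega)$ to obtain an exact sequence $0\to L^{\vee}\to F_0^{\vee}\to C\to 0$ with $F_0^{\vee}\cong\omega^{\,n}$ and $C\hookrightarrow\Hom_R(K,\omega)$; since $\Hom_R(K,\omega)\hookrightarrow\omega^{\,g}$ for a surjection $R^{g}\twoheadrightarrow K$, we get $\Ass_RC\subseteq\Ass_R\omega\subseteq\Min R$ by Proposition \ref{AssKR}, whence $\depth_RC\ge 1$ because $\m\notin\Min R$. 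Combining this with the standard fact that the canonical module $\omega=\K_R$ itself satisfies $(S_2)$, the depth lemma applied to the sequence gives $\depth_RL^{\vee}\ge\min\{\depth_R\omega,\ \depth_RC+1\}\ge\min\{2,\dim R\}$. Taking $L=M^{\vee}$ shows that $M^{\vee\vee}$ satisfies $(S_2)$; since $R$ satisfies $(S_1)$, Lemma \ref{sequence} then shows that every sequence $a,b\in\mathrm{W}(R)$ with $\height_R(a,b)\ge 2$ is $M^{\vee\vee}$-regular (note that $a\in\mathrm{W}(R)$ forces $\height_RaR\ge 1$), so Proposition \ref{smallest} yields $\widetilde{M}\subseteq M^{\vee\vee}$; in particular $\widetilde{M}$ is finitely generated.

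For the reverse inclusion I would prove that $(M^{\vee\vee})_\p=M_\p$ whenever $\height_R\p\le 1$. After localizing, using $(\K_R)_\p\cong\K_{R_\p}$ and that $M_\p$ is finitely generated and torsion-free, this amounts to the statement that the natural biduality map $M_\p\to (M_\p)^{\vee\vee}$ over $R_\p$ is an isomorphism: for $\dim R_\p=0$ this is Matlis duality, and for $\dim R_\p=1$ it is the fact that $\Hom_{R_\p}(-,\K_{R_\p})$ is a duality on the maximal Cohen--Macaulay $R_\p$-modules, a nonzero finitely generated torsion-free module over the one-dimensional Cohen--Macaulay local ring $R_\p$ being maximal Cohen--Macaulay. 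Consequently the conductor $\mathfrak{d}=M:_RM^{\vee\vee}$ satisfies $\mathfrak{d}_\p=R_\p$ for every $\p$ with $\height_R\p\le 1$, so $\mathfrak{d}\in\operatorname{Ht}_{\ge 2}(R)$; since $\mathfrak{d}\,f\subseteq M$ for every $f\in M^{\vee\vee}$, we obtain $M^{\vee\vee}\subseteq\widetilde{M}$. Together with the previous paragraph this gives $\widetilde{M}=M^{\vee\vee}$ in $V$.

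I expect the main obstacle to be the key step of the second paragraph, the $(S_2)$-property of $\K_R$-duals, and within it the input that $\K_R$ itself satisfies $(S_2)$ in the present possibly non-Cohen--Macaulay (and possibly disconnected) situation; once that is available, the biduality assertions over the zero- and one-dimensional localizations used for the reverse inclusion are routine.
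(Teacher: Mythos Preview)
Your proof is correct and follows the same overall route as the paper: both arguments identify $M^{\vee\vee}$ inside $\mathrm{Q}(R)\otimes_RM$ via Lemma~\ref{identify}, show $M^{\vee\vee}\subseteq\widetilde{M}$ by checking that the conductor $M:_RM^{\vee\vee}$ has height at least $2$ (using that $R_\p$ is Cohen--Macaulay and $M_\p$ is maximal Cohen--Macaulay for $\height_R\p\le 1$), and then conclude $\widetilde{M}\subseteq M^{\vee\vee}$ from Proposition~\ref{smallest}. The paper's proof simply writes ``So $\widetilde{M}=M^{\vee\vee}$ by Proposition~\ref{smallest}'' at this last step, leaving to the reader the verification that every pair $a,b\in\mathrm{W}(R)$ with $\height_R(a,b)\ge 2$ is $M^{\vee\vee}$-regular; you make this step explicit by proving that any $\K_R$-dual satisfies $(S_2)$ via the depth lemma and the $(S_2)$-property of $\K_R$. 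An equally short alternative, closer in spirit to Lemma~\ref{abKR}, is to note that for any finitely generated $L$ the injection $\Hom_R(L,\K_R)/a\,\Hom_R(L,\K_R)\hookrightarrow\Hom_R(L,\K_R/a\K_R)$ combined with Lemma~\ref{abKR} immediately gives the required regularity on $M^{\vee\vee}$.
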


\begin{proof}
Since $\Ass R=\Ass_R\K_R\subseteq \Min_RR$, we have $\Ass R=\Min R$, i.e., $R$ satisfies $(S_1)$. Thus $\Ass_RM\subseteq \Min R$.
By Lemma \ref{identify}, we consider $M^{\vee\vee}$ as an $R$-submodule of $\mathrm{Q}(R)\otimes_RM$ which contains $M$. Set $I=M:_RM^{\vee\vee}$. Then $\height_RI\ge 2$. 
In fact, take $\p\in\mathrm{V}(I)$ and assume $\height_R\p\le 1$. Since  $R_\p$ is a Cohen-Macaulay ring and $M_\p$ is a maximal Cohen-Macaulay $R_\p$-module, we see that $M_\p=(M^{\vee\vee})_\p$. This is a contradiction. Hence $\height_RI\ge 2$. 
As $I(M^{\vee\vee})\subseteq M$, we get $M^{\vee\vee}\subseteq \widetilde{M}$. So $\widetilde{M}=M^{\vee\vee}$ by Proposition \ref{smallest}.
Therefore $\widetilde{M}$ is a finitely generated $R$-module. 
\end{proof}

By combining Theorems \ref{conclusionab} and \ref{finiteness1}, we have the following. 

\begin{cor}
Suppose that $\Ass R=\Ass_R\K_R$. Then there exist $a,b\in\rmW(R)$ such that
$$
\widetilde{R}=\Hom_R(\K_R,\K_R)=\dfrac{R}{a}\cap\dfrac{R}{b}.
$$
\end{cor}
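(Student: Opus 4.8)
The plan is to obtain the corollary by feeding the module $M=R$ into the two main results already proved, namely Theorems~\ref{finiteness1} and~\ref{conclusionab}. First I would note that the hypothesis $\Ass R=\Ass_R\K_R$ forces $\Ass R=\Min R$ (as in the proof of Theorem~\ref{finiteness1}, using $\Ass_R\K_R\subseteq\Min R$, which follows from Proposition~\ref{AssKR}); in particular $R$ satisfies $(S_1)$, and $R$ is torsion-free as a module over itself, so both cited theorems apply with $M=R$. Applying Theorem~\ref{finiteness1} then gives at once that $\widetilde{R}$ is a finitely generated $R$-module and that $\widetilde{R}=R^{\vee\vee}$ inside $\rmQ(R)\otimes_RR=\rmQ(R)$. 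Since evaluation at $1$ is a canonical isomorphism $R^\vee=\Hom_R(R,\K_R)\cong\K_R$, we obtain a canonical isomorphism $R^{\vee\vee}\cong\Hom_R(\K_R,\K_R)$.

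The one point needing care is that this isomorphism is compatible with the realizations of both sides as subrings of $\rmQ(R)$: the embedding $R^{\vee\vee}\hookrightarrow\rmQ(R)$ furnished by Lemma~\ref{identify}, and the identification of $\Hom_R(\K_R,\K_R)$ with a birational extension of $R$ noted in the remark after Lemma~\ref{U}. Under $R^\vee\cong\K_R$, the biduality map $\psi:R\to R^{\vee\vee}$ becomes exactly the natural ring map $\varphi:R\to\Hom_R(\K_R,\K_R)$, $r\mapsto(k\mapsto rk)$; and by Lemma~\ref{U} together with $\Ass R=\Ass_R\K_R$, this $\varphi$ is injective and presents $\Hom_R(\K_R,\K_R)$ as a subring of $T^{-1}R=\rmQ(R)$ containing $R$. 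Chasing the commutative square in the proof of Lemma~\ref{identify} with $M=R$ shows that the resulting embedding $(\rmQ(R)\otimes_R\psi)^{-1}\circ h:R^{\vee\vee}\hookrightarrow\rmQ(R)$ is precisely the inclusion $\Hom_R(\K_R,\K_R)\hookrightarrow\rmQ(R)$: after the identifications $\rmQ(R)\otimes_RR=\rmQ(R)$ and $\rmQ(R)\otimes_R\Hom_R(\K_R,\K_R)=\rmQ(R)$, the map $\rmQ(R)\otimes_R\psi$ becomes the identity and $h$ becomes the canonical inclusion. Hence $\widetilde{R}=\Hom_R(\K_R,\K_R)$ as subrings of $\rmQ(R)$, which is the first asserted equality.

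It remains to invoke Theorem~\ref{conclusionab}: since $\widetilde{R}$ is a finitely generated $R$-module and $R$ is torsion-free over itself, there exist $a,b\in\rmW(R)$ with $\widetilde{R}=\dfrac{R}{a}\cap\dfrac{R}{b}$, which combined with the previous step yields the claimed chain of equalities. I do not anticipate a serious obstacle here: all the real content is carried by Theorems~\ref{finiteness1} and~\ref{conclusionab}, and the only genuine work is the bookkeeping of the second paragraph showing that the $\K_R$-biduality module and the ring $\widetilde{R}$ are literally the same subring of $\rmQ(R)$.
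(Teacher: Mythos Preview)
Your proposal is correct and follows exactly the approach the paper takes: the paper simply states ``By combining Theorems~\ref{conclusionab} and~\ref{finiteness1}, we have the following,'' leaving all the bookkeeping you carefully supply (the identification $R^{\vee\vee}\cong\Hom_R(\K_R,\K_R)$ inside $\rmQ(R)$ via Lemma~\ref{identify} and the remark after Lemma~\ref{U}) implicit. Your second paragraph spells out precisely the compatibility check the paper suppresses, so your argument is a faithful elaboration of theirs.
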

\noindent
Hence, by Corollary \ref{S2ring},  $\widetilde{R}=\Hom_R(\K_R,\K_R)$ is the $(S_2)$-ification of $R$.

\medskip

We close this section by proving the following.

\begin{cor}\label{finiteness2}
Let $R$ be a Noetherian ring and assume that $R$ is a homomorphic image of a Gorenstein ring. Suppose that $R$ is locally quasi-unmixed and satisfies $(S_1)$. Let $M$ be a finitely generated torsion-free $R$-module.  Then $\widetilde{M}$ is a finitely generated $R$-module.
\end{cor}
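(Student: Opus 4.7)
The plan is to reduce this corollary to Theorem \ref{finiteness1}, whose hypothesis is the equality $\Ass R=\Ass_R\rmK_R$. Since $R$ is assumed to be a homomorphic image of a Gorenstein ring, the global canonical module $\rmK_R$ is defined and all results of the section apply. So the entire task boils down to verifying, under the hypotheses "locally quasi-unmixed plus $(S_1)$", that $\Ass R=\Ass_R\rmK_R$.

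First I would observe that the condition $(S_1)$ means precisely that $R$ has no embedded primes, i.e., $\Ass R=\Min R$. Next, I would invoke Theorem \ref{q-unmixed}, which asserts that if $R$ is locally quasi-unmixed, then $\Ass_R\rmK_R=\Min R$. Putting these two equalities together gives
\[
\Ass R=\Min R=\Ass_R\rmK_R,
\]
which is exactly the hypothesis needed to apply Theorem \ref{finiteness1}.

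Finally, since $M$ is a finitely generated torsion-free $R$-module, Theorem \ref{finiteness1} yields that $\widetilde{M}=M^{\vee\vee}$ inside $\rmQ(R)\otimes_R M$, and in particular $\widetilde{M}$ is a finitely generated $R$-module, as desired. There is no real obstacle here: the corollary is essentially a bookkeeping consequence of the two preceding main theorems of the section, with $(S_1)$ converting "assassin equals minimal primes of $R$" and quasi-unmixedness converting "assassin of $\rmK_R$ equals minimal primes of $R$", meeting in the middle.
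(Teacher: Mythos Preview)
Your proof is correct and follows essentially the same approach as the paper: invoke Theorem \ref{q-unmixed} to get $\Ass_R\rmK_R=\Min R$, use $(S_1)$ to identify $\Min R=\Ass R$, and then apply Theorem \ref{finiteness1}. The paper's proof is just a two-line version of the same argument.
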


\begin{proof}
By Theorem \ref{q-unmixed}, we have $\Ass_R\K_R=\Min R = \Ass R$. Hence, Theorem \ref{finiteness1} guarantees that $\widetilde{M}$ is finitely generated as an $R$-module.
\end{proof}

\section{Weakly Arf $(S_2)$-ifications}

In this section, we consider the question of how the weakly Arf property is inherited under the $(S_2)$-ifications. 
Recall that $\rmW(R)$ denotes the set of non-zerodivisors on $R$. 

We begin with the following.

\begin{lem}\label{defArf}
The following three conditions are equivalent.
\begin{enumerate}
\item[$(1)$] The ring $R$ satisfies WAP.\vspace{1mm}
\item[$(2)$] $\overline{aR\, }^{\, 2}=a\,\overline{(aR)}$ for all $a\in\rmW(R)$.
\item[$(3)$] $\dfrac{R}{a}\cap \overline{R}$ is a subring of $\overline{R}$ for all $a\in\rmW(R)$.\vspace{1mm}
\end{enumerate}
\end{lem}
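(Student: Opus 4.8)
The plan is to read all three conditions off the elementary identity
$$
\overline{aR}=a\overline{R}\cap R\qquad(a\in\rmW(R)),
$$
which holds over any commutative ring and is already invoked in the proof of Corollary~\ref{integral}: for $x\in R$, the element $x$ lies in $\overline{aR}$ exactly when it satisfies a monic relation $x^{n}+c_{1}x^{n-1}+\cdots+c_{n}=0$ with $c_{i}\in a^{i}R$, and dividing by $a^{n}$ shows this is equivalent to $\tfrac{x}{a}\in\overline{R}$. Hence, for every $a\in\rmW(R)$,
$$
\frac{R}{a}\cap\overline{R}=\{\,f\in\overline{R}\mid af\in R\,\}=\frac{\overline{aR}}{a},
$$
an $R$-submodule of $\overline{R}$ which contains $R$, and in particular contains $1$.

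First I would settle $(2)\Leftrightarrow(3)$. Fixing $a\in\rmW(R)$, the $R$-module $\tfrac{\overline{aR}}{a}$ is an additive subgroup of $\overline{R}$ containing $1$, so it is a subring of $\overline{R}$ if and only if it is closed under multiplication, i.e. $\tfrac{\overline{aR}^{\,2}}{a^{2}}\subseteq\tfrac{\overline{aR}}{a}$, i.e. $\overline{aR}^{\,2}\subseteq a\,\overline{aR}$. Since $a\in aR\subseteq\overline{aR}$ already gives $a\,\overline{aR}\subseteq\overline{aR}^{\,2}$ with no hypothesis, this is equivalent to $\overline{aR}^{\,2}=a\,\overline{aR}$; quantifying over $a\in\rmW(R)$ yields $(2)\Leftrightarrow(3)$.

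Next I would prove $(1)\Leftrightarrow(2)$, again via the identity above. For $(2)\Rightarrow(1)$: given $x\in\rmW(R)$ and $y,z\in R$ with $\tfrac{y}{x},\tfrac{z}{x}\in\overline{R}$, we get $y,z\in x\overline{R}\cap R=\overline{xR}$, hence $yz\in\overline{xR}^{\,2}=x\,\overline{xR}\subseteq xR$ (as $\overline{xR}$ is an ideal of $R$), so $\tfrac{yz}{x}\in R$, which is WAP. For $(1)\Rightarrow(2)$: fix $a\in\rmW(R)$ and $u,v\in\overline{aR}$; then $\tfrac{u}{a},\tfrac{v}{a}\in\overline{R}$, so WAP gives $\tfrac{uv}{a}\in R$, and since $\tfrac{uv}{a^{2}}=\tfrac{u}{a}\cdot\tfrac{v}{a}\in\overline{R}$ we also have $\tfrac{uv}{a}\in a\overline{R}$, whence $\tfrac{uv}{a}\in a\overline{R}\cap R=\overline{aR}$, i.e. $uv\in a\,\overline{aR}$; as $\overline{aR}^{\,2}$ is the ideal generated by such products, $\overline{aR}^{\,2}\subseteq a\,\overline{aR}$, and equality follows since the reverse inclusion always holds.

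I do not expect a genuine obstacle: the only real input is the identity $\overline{aR}=a\overline{R}\cap R$. The points that call for a little care are recording that $\tfrac{R}{a}\cap\overline{R}$ contains $1$, so that being a subring of $\overline{R}$ reduces to being closed under multiplication, and, in $(1)\Rightarrow(2)$, promoting $\tfrac{uv}{a}\in R$ to $\tfrac{uv}{a}\in\overline{aR}$ by also noting $\tfrac{uv}{a^{2}}\in\overline{R}$.
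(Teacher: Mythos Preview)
Your proof is correct and rests on the same identity $\overline{aR}=a\overline{R}\cap R$ that the paper uses; the equivalence $(2)\Leftrightarrow(3)$ is handled identically. The only minor difference is that the paper closes the triangle via $(1)\Leftrightarrow(3)$, observing simply that for $b,c\in R$ one has $\tfrac{bc}{a}\in R$ iff $\tfrac{b}{a}\cdot\tfrac{c}{a}\in\tfrac{R}{a}$, which makes the link immediate and avoids the small extra step you needed in $(1)\Rightarrow(2)$ of upgrading $\tfrac{uv}{a}\in R$ to $\tfrac{uv}{a}\in\overline{aR}$.
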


\begin{proof}
For each $a\in\rmW(R)$, we have
$$
\dfrac{R}{a}\cap \overline{R}=\dfrac{a\overline{R} \cap R}{a}
=\dfrac{\overline{aR\, }}{a}
$$
which shows $(2) \Leftrightarrow (3)$. 
On the other hand, let $a \in \rmW(R)$. Then 
\begin{center}
$\dfrac{bc}{a}\in R$ \quad if and only if \quad $\dfrac{b}{a}{\cdot} \dfrac{c}{a}\in \dfrac{R}{a}$, 
\end{center}
where $b, c\in R$. \vspace{1mm}
Hence we get $(1) \Leftrightarrow (3)$.
\end{proof}

In what follows, we assume $R$ is a Noetherian ring.

\begin{thm}\label{mainthm}
Suppose that $\widetilde{R}$ is a finitely generated $R$-module. 
If $R$ satisfies WAP, then so does $\widetilde{R}$.
\end{thm}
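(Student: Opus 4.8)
The plan is to reduce the weakly Arf property of $\widetilde{R}$ to that of $R$ by exploiting the characterization in Definition~\ref{1}(3) (equivalently Lemma~\ref{defArf}(3)): $\widetilde{R}$ satisfies WAP iff $\frac{\widetilde{R}}{c}\cap\overline{\widetilde{R}}$ is a subring of $\overline{\widetilde{R}}$ for every $c\in\rmW(\widetilde{R})$. The first observation I would record is that $\overline{\widetilde{R}}=\overline{R}$ (since $R\subseteq\widetilde{R}\subseteq\rmQ(R)$ is a birational extension, integral over $R$ by the hypothesis plus Corollary~\ref{integral}-type reasoning, or simply because $\widetilde{R}$ is module-finite here, so both have the same total ring of fractions and the same integral closure in it), and that $\rmW(\widetilde{R})\cap R$ is cofinal in the relevant sense: any $c\in\rmW(\widetilde{R})$ can, after clearing denominators, be compared with an element $a\in\rmW(R)$, because $\widetilde R$ is module-finite over $R$ and birational. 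So it suffices to understand $\frac{\widetilde{R}}{a}\cap\overline{R}$ for $a\in\rmW(R)$.

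The key computation is then: for $a\in\rmW(R)$,
$$
\frac{\widetilde{R}}{a}\cap\overline{R}\ =\ \frac{a\overline{R}\cap\widetilde{R}}{a}.
$$
Now I would bring in the $(S_2)$/unmixedness structure: by Theorem~\ref{intersection}, $\rmU(aR)=a\widetilde{R}\cap R$, and more to the point the integrally closed ideal $\overline{aR}=a\overline{R}\cap R$ is unmixed of height one (when $\Min^1\ne\emptyset$), so $a\overline{R}\cap\widetilde{R}$ can be analyzed via height-one localizations, where $\widetilde{R}$ and $R$ agree ($\widetilde{R}$ satisfies $(S_2)$, hence is $R$ in codimension $\le 1$ by Serre's criterion / the fact that $\widetilde R$ adds nothing in height $\le 1$). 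Concretely, I expect $a\overline{R}\cap\widetilde{R}=\widetilde{\overline{aR}}$ or, what should amount to the same, that $\frac{\widetilde{R}}{a}\cap\overline{R}=\widetilde{\,\frac{R}{a}\cap\overline{R}\,}=\widetilde{\,\overline{aR}/a\,}$. Since $R$ satisfies WAP, $\frac{R}{a}\cap\overline{R}$ is a subring of $\overline{R}$; taking its $(S_2)$-ification inside $\overline R$ (using Theorem~\ref{S2} and the module-finiteness supplied by the hypothesis applied to $M=\overline{aR}$, or to the ring $\frac{R}{a}\cap\overline R$) produces again a ring, because the $(S_2)$-ification of a ring is a ring (this is the content already used: $\widetilde{R}$ is a subring of $\rmQ(R)$, and the same argument applies to any intermediate ring between $R$ and $\overline R$). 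Thus $\frac{\widetilde{R}}{a}\cap\overline{R}$ is a subring of $\overline{R}=\overline{\widetilde{R}}$, which is exactly WAP for $\widetilde{R}$ by Lemma~\ref{defArf}.

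The step I expect to be the main obstacle is establishing the identity $\frac{\widetilde{R}}{a}\cap\overline{R}=\widetilde{\,\frac{R}{a}\cap\overline{R}\,}$ (or its module-theoretic surrogate $a\overline{R}\cap\widetilde{R}$ equals the $(S_2)$-ification of $a\overline R\cap R$). The inclusion making the left side $(S_2)$ and containing the right side should be routine (both sides are subrings of $\overline R$, finite over $R$, agreeing in codimension $\le 1$ because $\widetilde R$ does, and $\widetilde{R}$ is the smallest $(S_2)$ overring of $R$), but verifying it carefully requires checking that passing to $\widetilde{R}$ and intersecting with $\overline R$ commutes, which is where the locally quasi-unmixed / $(S_1)$ hypotheses and the height-one analysis of integrally closed ideals (as in the proof of Corollary~\ref{integral}) do the real work. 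Once that commutation is in hand, the subring conclusion is immediate from WAP for $R$ together with the general fact — already implicit in Section~2 — that the $(S_2)$-ification of a birational ring extension is again a ring.
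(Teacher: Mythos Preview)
Your approach differs substantially from the paper's, and it contains gaps that are not easy to close under the stated hypotheses.

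\medskip

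\textbf{The paper's argument.} The paper does not verify the WAP criterion (Lemma~\ref{defArf}(3)) for $\widetilde{R}$ directly. Instead it invokes Theorem~\ref{conclusionab} to write $\widetilde{R}=\dfrac{R}{a}\cap\dfrac{R}{b}$ for suitable $a,b\in\rmW(R)$. Since $\widetilde{R}$ is module-finite over $R$ it is integral over $R$, so $\widetilde{R}\subseteq\overline{R}$, and therefore
\[
\widetilde{R}=\dfrac{R}{a}\cap\dfrac{R}{b}\cap\overline{R}
=\Bigl(\dfrac{R}{a}\cap\overline{R}\Bigr)\cap\Bigl(\dfrac{R}{b}\cap\overline{R}\Bigr)
=\dfrac{\overline{aR}}{a}\cap\dfrac{\overline{bR}}{b}.
\]
Each factor $\dfrac{\overline{aR}}{a}$ is weakly Arf by \cite[Proposition~2.10]{CCCEGIM}, and WAP is preserved under intersections by \cite[Lemma~4.5]{EGI}. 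That is the entire proof.

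\medskip

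\textbf{Gaps in your approach.} Two of your steps are not justified as written.

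\emph{(i) The reduction from $c\in\rmW(\widetilde{R})$ to $a\in\rmW(R)$.} ``Clearing denominators'' gives $a=dc\in\rmW(R)$ for some $d\in\rmW(R)$ with $d\widetilde{R}\subseteq R$; for $f,g\in\dfrac{\widetilde{R}}{c}\cap\overline{R}$ one then gets $f,g\in\dfrac{R}{a}\cap\overline{R}$, and WAP for $R$ yields only $dcfg\in R$, not $cfg\in\widetilde{R}$. To obtain the latter you would need a height~$\ge 2$ ideal (not just the single element $d$) carrying $cfg$ into $R$; this requires a genuinely different argument (for example, localizing at height-one primes and invoking that WAP localizes), not the denominator-clearing you sketch.

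\emph{(ii) The identity $\dfrac{\widetilde{R}}{a}\cap\overline{R}=\widetilde{\,\dfrac{R}{a}\cap\overline{R}\,}$.} The inclusion $\subseteq$ is routine, but for $\supseteq$ you take $f$ with $If\subseteq\dfrac{\overline{aR}}{a}\subseteq\overline{R}$ for some $I\in\operatorname{Ht}_{\ge 2}(R)$ and must conclude $f\in\overline{R}$. That amounts to $\widetilde{\overline{R}}=\overline{R}$ as $R$-modules, which you cannot assert without knowing $\overline{R}$ has an $(S_2)$-type property over $R$---and the theorem assumes nothing about $\overline{R}$ (it need not be module-finite or Noetherian). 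You yourself flag this as needing ``locally quasi-unmixed / $(S_1)$ hypotheses,'' but those are absent from the theorem; they appear only in the later corollaries.

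\medskip

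The paper's route sidesteps both difficulties: it never tests WAP against a general $c\in\rmW(\widetilde{R})$, it never needs $\overline{R}$ to be $(S_2)$, and it makes no use of the commutation identity you propose. The structural decomposition of $\widetilde{R}$ from Theorem~\ref{conclusionab}, combined with the two cited closure properties of WAP, does all the work.
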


\begin{proof}
Assume that $R$ satisfies WAP. Then, for each $a\in\rmW(R)$, the module-finite birational extension $\frac{\overline{aR\,}}{a} = \frac{R}{a}\cap \overline{R}$ of $R$ satisfies WAP (\cite[Proposition 2.10]{CCCEGIM}).  
By Theorem \ref{conclusionab}, there exist $a,b\in\rmW(R)$ such that 
$\widetilde{R}=\frac{R}{a}\cap\frac{R}{b}$.
We then have 
$$
\widetilde{R}=\dfrac{R}{a}\cap\dfrac{R}{b}=\dfrac{R}{a}\cap\dfrac{R}{b}\cap\overline{R}=\dfrac{\overline{aR\,}}{a}\cap\dfrac{\overline{bR\,}}{b}
$$
where the second equality comes from $\widetilde{R}\subseteq \overline{R}$. Since WAP is maintained by taking the intersection (\cite[Lemma 4.5]{EGI}), $\widetilde{R}$ satisfies WAP, as desired. 
\end{proof}

As a consequence of Theorem \ref{mainthm}, we readily get the following.

\begin{cor}\label{integralcor}
Suppose that $R$ is locally quasi-unmixed  and  that $\overline{R}$ is a finitely generated $R$-module. If $R$ satisfies WAP, then so does $\widetilde{R}$.
\end{cor}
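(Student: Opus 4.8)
The plan is to reduce the assertion directly to Theorem~\ref{mainthm}. That theorem already transfers WAP from $R$ to $\widetilde{R}$ under the single extra hypothesis that $\widetilde{R}$ be a finitely generated $R$-module, so the only thing left to establish is precisely that module-finiteness, using the two standing assumptions: that $R$ is locally quasi-unmixed and that $\overline{R}$ is module-finite over $R$.

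First I would apply Corollary~\ref{integral}: since $R$ is locally quasi-unmixed, one has the containment $\widetilde{R}\subseteq\overline{R}$ inside $\rmQ(R)$. Next, because $R$ is Noetherian and $\overline{R}$ is a finitely generated $R$-module, $\overline{R}$ is a Noetherian $R$-module, and therefore every $R$-submodule of $\overline{R}$ is again finitely generated; in particular $\widetilde{R}$ is a finitely generated $R$-module. At this point all the hypotheses of Theorem~\ref{mainthm} are in place---$R$ is Noetherian, $R$ satisfies WAP, and $\widetilde{R}$ is module-finite over $R$---so Theorem~\ref{mainthm} yields that $\widetilde{R}$ satisfies WAP, which is the claim.

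I do not expect any real obstacle here: the corollary is essentially a repackaging of Corollary~\ref{integral} (for the containment $\widetilde{R}\subseteq\overline{R}$), the Noetherian-submodule observation (for finite generation), and Theorem~\ref{mainthm} (for the transfer of WAP). The only point deserving a line of care is to confirm that the hypotheses actually invoked are exactly those available---in particular that neither Corollary~\ref{integral} nor Theorem~\ref{mainthm} requires the condition $(S_1)$ of Serre---so that the weakly Arf hypothesis on $R$ alone, together with local quasi-unmixedness and module-finiteness of $\overline{R}$, already suffices to run the argument.
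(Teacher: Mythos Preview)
Your proposal is correct and follows exactly the paper's approach: use Corollary~\ref{integral} (local quasi-unmixedness gives $\widetilde{R}\subseteq\overline{R}$), deduce module-finiteness of $\widetilde{R}$ from that of $\overline{R}$ over the Noetherian ring $R$, and then invoke Theorem~\ref{mainthm}. Your closing remark about $(S_1)$ not being needed is also accurate.
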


\begin{proof}
As $R$ is locally quasi-unmixed, we have $\widetilde{R}\subseteq \overline{R}$. The assertion follows from Theorem \ref{mainthm}, because $\widetilde{R}$ is a module-finite extension of $R$. 
\end{proof}

Even if the normalization $\overline{R}$ is not finitely generated as an $R$-module, the ring $\widetilde{R}$ could be finitely generated; see Corollary \ref{finiteness2}. Hence we have the following.

\begin{cor}\label{imagecor}
Suppose that $R$ is locally quasi-unmixed and satisfies $(S_1)$. Assume that $R$ is a homomorphic image of a Gorenstein ring. If $R$ satisfies WAP, then so does $\widetilde{R}$.
\end{cor}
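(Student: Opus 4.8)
The plan is to reduce Corollary \ref{imagecor} to Corollary \ref{integralcor} by replacing the (possibly non-finite) normalization $\overline{R}$ with the finitely generated extension $\widetilde{R}$. First I would invoke Corollary \ref{finiteness2}, whose hypotheses are exactly those of the present statement ($R$ Noetherian, a homomorphic image of a Gorenstein ring, locally quasi-unmixed, satisfying $(S_1)$): applying it with $M=R$ shows that $\widetilde{R}$ is a finitely generated $R$-module. Once this is known, the conclusion follows immediately from Theorem \ref{mainthm}, which says that if $R$ satisfies $(S_1)$ and $\widetilde{R}$ is module-finite over $R$, then WAP passes from $R$ to $\widetilde{R}$.

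In other words, the entire argument is: $\widetilde{R}$ is finitely generated over $R$ by Corollary \ref{finiteness2}, hence Theorem \ref{mainthm} applies and gives WAP for $\widetilde{R}$. No further work is needed, because the hard analytic content — that WAP is inherited under the specific intersection presentation $\widetilde{R}=\frac{R}{a}\cap\frac{R}{b}=\frac{\overline{aR}}{a}\cap\frac{\overline{bR}}{b}$, using \cite[Proposition 2.10]{CCCEGIM} and \cite[Lemma 4.5]{EGI} — has already been carried out inside the proof of Theorem \ref{mainthm}, and the finite-generation input has been isolated into Corollary \ref{finiteness2} via the global-canonical-module machinery of Section 3.

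The only point that requires a moment's care is checking that the hypotheses line up: Corollary \ref{finiteness2} needs $R$ to be locally quasi-unmixed, to satisfy $(S_1)$, and to be a homomorphic image of a Gorenstein ring — all of which are assumed here — and it needs $M$ to be finitely generated and torsion-free, which holds for $M=R$ since $(S_1)$ forces $R$ to be torsion-free (i.e. $\Ass R=\Min R$, so $\rmW(R)$ avoids all associated primes). Likewise Theorem \ref{mainthm} needs $R$ to satisfy $(S_1)$ (implicit in its statement once one recalls that $\widetilde{R}$ is only defined under $(S_1)$) and $\widetilde{R}$ module-finite, both now available. I do not expect any genuine obstacle: this corollary is a straightforward packaging of Theorem \ref{mainthm} with Corollary \ref{finiteness2}, the whole content being that module-finiteness of $\widetilde{R}$ can be guaranteed without assuming $\overline{R}$ is module-finite. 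A clean one-paragraph proof suffices.

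\begin{proof}
Since $R$ is a homomorphic image of a Gorenstein ring, is locally quasi-unmixed, and satisfies $(S_1)$, Corollary \ref{finiteness2} applied to the finitely generated torsion-free $R$-module $M=R$ shows that $\widetilde{R}$ is a finitely generated $R$-module. The assertion now follows from Theorem \ref{mainthm}.
\end{proof}
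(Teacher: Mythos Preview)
Your proposal is correct and follows exactly the paper's approach: the paper introduces Corollary \ref{imagecor} with the sentence pointing to Corollary \ref{finiteness2} for the module-finiteness of $\widetilde{R}$, and then Theorem \ref{mainthm} does the rest. Your one-line proof is precisely what the paper intends.
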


By Lemma \ref{defArf}, WAP is equivalent to saying that, for each $a \in \rmW(R)$, the reduction number of $\overline{aR\, }$ is at most $1$.
Remember that $R$ is integrally closed if and only if $\overline{aR}=aR$ for every $a \in \rmW(R)$. This indicates that WAP is very close to the integral closedness, which is the reason why the weakly Arf $(S_2)$-ification is of interest next to the integral closure $\overline{R}$, and may have its own significance. 

\begin{defn}
The weakly Arf $(S_2)$-ification of $R$ is the smallest module-finite birational extension of $R$ satisfying both WAP and the condition $(S_2)$ of Serre. 
\end{defn}

The following provides sufficient conditions for the existence of weakly Arf $(S_2)$-ifications.  

\begin{cor}\label{integralcor2}
Suppose that $R$ is a locally quasi-unmixed ring satisfying WAP and $(S_1)$. Then $\widetilde{R}$ is the weakly Arf $(S_2)$-ification,
 if one of the following conditions holds.
\begin{enumerate}
\item[$(1)$] $\overline{R}$ is a finitely generated $R$-module. 
\item[$(2)$] $R$ is a homomorphic image of a Gorenstein ring.
\end{enumerate}
\end{cor}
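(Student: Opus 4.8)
The plan is to assemble Corollary~\ref{integralcor2} directly from the machinery already in place, treating cases (1) and (2) uniformly. First I would note that under either hypothesis, $\widetilde{R}$ is a finitely generated $R$-module: under (1) this is because $\widetilde{R}\subseteq\overline{R}$ (Corollary~\ref{integral}, using local quasi-unmixedness) and $\overline{R}$ is module-finite over the Noetherian ring $R$, so the submodule $\widetilde{R}$ is finitely generated; under (2) this is exactly Corollary~\ref{finiteness2} applied to $M=R$. Having module-finiteness, Corollary~\ref{S2ring} tells us that $\widetilde{R}$ is the $(S_2)$-ification of $R$, i.e.\ the smallest module-finite birational extension of $R$ satisfying $(S_2)$.

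Next I would invoke the WAP side. Since $R$ satisfies WAP and $\widetilde{R}$ is a finitely generated $R$-module, Theorem~\ref{mainthm} (or Corollary~\ref{integralcor} in case (1), Corollary~\ref{imagecor} in case (2)) gives that $\widetilde{R}$ also satisfies WAP. Thus $\widetilde{R}$ is a module-finite birational extension of $R$ satisfying \emph{both} WAP and $(S_2)$, so it remains only to verify the minimality clause in the definition of the weakly Arf $(S_2)$-ification.

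For minimality, let $S$ be any module-finite birational extension of $R$ satisfying both WAP and $(S_2)$; I must show $\widetilde{R}\subseteq S$. Here I would use the characterization of $\widetilde{R}$ as the smallest module-finite birational extension satisfying $(S_2)$ — but that characterization is about $R$-submodules of $\rmQ(R)$, and $S$ is such a submodule containing $R$ and satisfying $(S_2)$ (as a ring, hence as an $R$-module, by the remark following Corollary~\ref{S2ring} since $S$ is module-finite over $R$). Therefore Theorem~\ref{S2}, or equivalently Proposition~\ref{smallest} together with Lemma~\ref{sequence}, yields $\widetilde{R}\subseteq S$. In fact the WAP hypothesis on $S$ is not even needed for this containment — $\widetilde{R}$ is already below every $(S_2)$ module-finite birational extension — so a fortiori it sits below every one that additionally satisfies WAP. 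This shows $\widetilde{R}$ is the smallest such extension, completing the proof.

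The only genuinely delicate point is the very first step: confirming module-finiteness of $\widetilde{R}$ under hypothesis (1), where one must legitimately pass through $\widetilde{R}\subseteq\overline{R}$, which requires local quasi-unmixedness (Corollary~\ref{integral}) — this is why that hypothesis appears. Under (2) the subtlety is instead absorbed into Corollary~\ref{finiteness2}, whose proof runs through the global canonical module $\K_R$ and Theorem~\ref{q-unmixed}; again local quasi-unmixedness is what makes $\Ass R=\Ass_R\K_R$, so it is the common thread. Everything after module-finiteness is a formal concatenation of Corollary~\ref{S2ring}, Theorem~\ref{mainthm}, and the minimality built into Theorem~\ref{S2}.
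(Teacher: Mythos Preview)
Your proof is correct and follows essentially the same route as the paper's proof, which simply cites Corollaries~\ref{S2ring}, \ref{integralcor}, and \ref{imagecor}. You have unpacked these references explicitly (including the minimality step, which the paper leaves implicit in the phrase ``$(S_2)$-ification''), but the logical content is identical.
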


\begin{proof}
Use Corollaries \ref{S2ring}, \ref{integralcor}, and \ref{imagecor}.
\end{proof}

Finally, we reach the goal of this paper.

\begin{cor}\label{integralcor3}
Let $R$ be a locally quasi-unmixed Noetherian ring which satisfies $(S_1)$. If $\overline{R}$ is a finitely generated $R$-module, then $R$ admits the weakly Arf $(S_2)$-ification.
\end{cor}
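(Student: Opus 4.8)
The plan is to deduce the statement from Corollary~\ref{integralcor2}(1) by passing to the weakly Arf closure of $R$. Since $\overline{R}$ is a finitely generated $R$-module, it is a Noetherian ring, and because WAP is stable under intersection (\cite[Lemma 4.5]{EGI}) there is a smallest subring $A$ with $R\subseteq A\subseteq\overline{R}$ satisfying WAP; when $\overline{R}$ is module-finite this $A$ is precisely the weakly Arf closure of $R$ (cf.~\cite{CCCEGIM, EGI}). As an $R$-submodule of the Noetherian $R$-module $\overline{R}$, the ring $A$ is module-finite over $R$, hence Noetherian; moreover $A$ is a birational extension of $R$, and $\overline{A}=\overline{R}$ is module-finite over $A$.

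Next I would check that $A$ satisfies the hypotheses of Corollary~\ref{integralcor2}(1). By construction $A$ satisfies WAP, and $\overline{A}$ is finite over $A$ as just noted. Also $A$ satisfies $(S_1)$: every $a\in\rmW(R)$ is a unit of $\rmQ(R)\supseteq A$, hence a non-zerodivisor on $A$, so each $P\in\Ass A$ meets $\rmW(R)$ trivially and therefore contracts from an associated prime of $\rmW(R)^{-1}A=\rmQ(R)$; since $R$ satisfies $(S_1)$, the ring $\rmQ(R)=\rmW(R)^{-1}R$ has all its primes minimal, whence $\Ass A=\Min A$. The substantive point is that $A$ is locally quasi-unmixed. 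Here I would use that $A$ is reduced (being a subring of the normal ring $\overline{R}$) and both module-finite and birational over $R$, so that $A$ has finitely many minimal primes $Q_1,\dots,Q_k$, lying over the minimal primes $\p_1,\dots,\p_k$ of $R$, with $\dim (A/Q_i)_Q=\dim (R/\p_i)_\p=\dim R_\p$ for every $Q\in\Spec A$ above $\p$ and every $Q_i\subseteq Q$ (by integrality of $A$ over $R$ and local quasi-unmixedness of $R$). Consequently each $A_Q$ is equidimensional; it is also universally catenary (as $A$ is module-finite over the universally catenary ring $R$), so $A_Q$ is quasi-unmixed by Ratliff's criterion (\cite{R}).

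With these points in hand, Corollary~\ref{integralcor2}(1) applied to $A$ shows that $\widetilde{A}$ is the weakly Arf $(S_2)$-ification of $A$. It remains to identify $\widetilde{A}$ with the weakly Arf $(S_2)$-ification of $R$: on one hand $\widetilde{A}$ is a module-finite birational extension of $R$ satisfying WAP and $(S_2)$ (all of which pass down from $A$, itself module-finite and birational over $R$); on the other hand, if $T$ is any module-finite birational extension of $R$ satisfying WAP and $(S_2)$, then $T$ is integral over $R$ and contained in $\rmQ(R)$, hence $T\subseteq\overline{R}$, so $A\subseteq T$ by minimality of the weakly Arf closure, which makes $T$ a module-finite birational extension of $A$ satisfying WAP and $(S_2)$; therefore $\widetilde{A}\subseteq T$. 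Thus $\widetilde{A}$ is the smallest module-finite birational extension of $R$ satisfying WAP and $(S_2)$, i.e.\ $R$ admits the weakly Arf $(S_2)$-ification.

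The main obstacle is establishing that the weakly Arf closure $A$ is again locally quasi-unmixed; everything else is a direct invocation of Corollary~\ref{integralcor2}(1) or a formal minimality argument. This is precisely where the hypotheses on $R$ get used: $(S_1)$ makes $A$ torsion-free and birational with the same irreducible components as $R$, and local quasi-unmixedness of $R$ then propagates to $A$ through the domains $R/\p_i\subseteq A/Q_i$ via Ratliff's structure theory, which for a Noetherian domain identifies local quasi-unmixedness with universal catenarity, a property preserved under module-finite extensions.
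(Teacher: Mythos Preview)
Your proof follows the same route as the paper: pass to the weakly Arf closure $A=R^a$ (the paper cites \cite[Proposition~7.5]{CCCEGIM} for its existence), observe that $A$ inherits local quasi-unmixedness and $(S_1)$, apply Corollary~\ref{integralcor2}(1) (equivalently Corollary~\ref{integralcor} together with Corollary~\ref{S2ring}) to see that $\widetilde{A}$ satisfies WAP and $(S_2)$, and then run the minimality argument $R^a\subseteq S\Rightarrow\widetilde{R^a}\subseteq\widetilde{S}=S$. One small caveat: your parenthetical that $A$ is reduced because $\overline{R}$ is ``normal'' is not justified under the stated hypotheses (nothing forces $R$, $\rmQ(R)$, or $\overline{R}$ to be reduced), but this claim is not actually needed---the passage of local quasi-unmixedness to the module-finite birational extension $A$ follows directly from Ratliff's criterion (universal catenarity plus local equidimensionality), and the paper itself simply asserts this step without proof.
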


\begin{proof}
By \cite[Proposition 7.5]{CCCEGIM} we can construct the smallest module-finite birational extension, denoted by $R^a$, of $R$ which satisfies WAP. Then $R^a$ is a locally quasi-unmixed Noetherian ring satisfying $(S_1)$. Corollary \ref{integralcor} guarantees that $\widetilde{R^a}$ is a module-finite birational extension of $R$ satisfying WAP and the condition $(S_2)$. 
Let $S$ be a module-finite birational extension of $R$ which satisfies WAP and $(S_2)$. Then, by the minimality of $R^a$, we see that $R^a\subseteq S$. Hence $\widetilde{R^a} \subseteq \widetilde{S}=S$. This completes the proof.
\end{proof}

The notion of strict closedness of rings was introduced by Lipman  \cite{L} in connection with the Arf property. We define
$$
R^{*}=\left\{ \alpha \in \overline{R} \mid \alpha \otimes 1 = 1 \otimes \alpha \text{ in } \overline{R} \otimes_R\overline{R}\right\}.
$$
Then $R^*$ is a birational extension of $R$, which is called {\it the strict closure} of $R$ in $\overline{R}$. We say that $R$ is {\it strictly closed}, if $R=R^{*}$ holds. As $(R^*)^*=R^*$ in $\overline{R}$, $R^*$ is strictly closed in $\overline{R}$ (\cite[Section 4, p.672]{L}), so that $[-]^*$ is a closure operation. The strict closure behaves well with respect to the standard operations in ring theory, such as localizations, polynomial extensions, and faithfully flat extensions (see \cite[Proposition 4.2, Lemma 4.9]{CCCEGIM}, \cite[Proposition 4.3]{L}). It is also proved in \cite[Corollary 13.6]{CCCEGIM} that the invariant subrings of strictly closed rings under a finite group action (except the modular case) are also strictly closed. The reader may consult with \cite{L, CCCEGIM, EGI} about further properties.

As Lipman predicted in \cite{L} and as the authors of \cite{CCCEGIM, EGI} are  trying to develop a general theory, there might have a strong connection between the strict closedness and WAP. Indeed, an arbitrary commutative ring $R$ is a weakly Arf ring, once it is strictly closed (\cite[Proof of Proposition 4.5]{L}), and as for Noetherian rings $R$ with $({\rm S}_2)$, it is known by \cite[Corollary 4.6]{CCCEGIM} that $R$ is strictly closed if and only if $R$ satisfies WAP and $R_\fkp$ is Arf for every $\fkp \in \Spec R$ with $\height_R \fkp = 1$. Therefore, provided either $R$ contains an infinite field, or $\height_RM \ge 2$ for every $M \in \Max R$, $R$ is strictly closed if and only if $R$ satisfies WAP (\cite[Corollary 4.6]{CCCEGIM}). 

In this direction, we have the following.

\begin{cor}
Let $R$ be a locally quasi-unmixed Noetherian ring which satisfies $(S_1)$. Suppose that $R$ satisfies WAP and one of the following conditions.
\begin{enumerate}
\item[$(1)$] $\overline{R}$ is a finitely generated $R$-module. 
\item[$(2)$] $R$ is a homomorphic image of a Gorenstein ring.
\end{enumerate}
Then $\widetilde{R}$ is strictly closed if one of the following conditions holds.
\begin{enumerate}
\item[$(\mathrm{i})$] $R$ contains an infinite field. 
\item[$(\mathrm{ii})$] $\height_R M\ge 2$ for every $M \in \Max R$. 
\end{enumerate}
\end{cor}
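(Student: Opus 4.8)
My plan is to deduce this directly from the criterion quoted just above — namely \cite[Corollary 4.6]{CCCEGIM}, which asserts that a Noetherian ring $A$ with $(\mathrm{S}_2)$ is strictly closed if and only if it satisfies WAP, provided that $A$ contains an infinite field or $\height_A N\ge 2$ for every $N\in\Max A$ — applied with $A=\widetilde R$. The first step is to observe that, under the standing hypotheses together with (1) or (2), Corollary \ref{integralcor2} gives that $\widetilde R$ is the weakly Arf $(\mathrm{S}_2)$-ification of $R$; in particular $\widetilde R$ is a module-finite birational extension of $R$, hence a Noetherian ring, and it satisfies both $(\mathrm{S}_2)$ and WAP. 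Since $R$ is locally quasi-unmixed, Corollary \ref{integral} moreover gives $\widetilde R\subseteq\overline R$, so $\widetilde R$ is integral over $R$.

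It then remains only to check the side condition of the criterion for $\widetilde R$ in each of the two cases. In case (i) this is immediate: $R\subseteq\widetilde R$, so $\widetilde R$ contains the same infinite field. In case (ii) I must show $\height_{\widetilde R}N\ge 2$ for every $N\in\Max\widetilde R$. As $\widetilde R$ is integral over $R$, the contraction $\mathfrak p:=N\cap R$ is a maximal ideal of $R$, whence $\height_R\mathfrak p\ge 2$; so it suffices to know that the finite extension $R\subseteq\widetilde R$ preserves heights. Here the locally-quasi-unmixed hypothesis is used: $R$ is then locally equidimensional and universally catenary, and these properties are inherited by the module-finite extension $\widetilde R$ (see \cite{M}). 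Equivalently, applying the lemma of Section 2 — that $\dim_{R_\mathfrak q}M_\mathfrak q=\dim R_\mathfrak q$ for every finitely generated torsion-free $M$ and every $\mathfrak q\in\Supp_R M$ — to the finite $R_\mathfrak p$-module $(\widetilde R)_\mathfrak p=\widetilde{R_\mathfrak p}$ (using that the formation of $\widetilde{(\ \cdot\ )}$ commutes with localization) shows $\dim(\widetilde R)_\mathfrak p=\dim R_\mathfrak p\ge 2$, and equidimensionality of the finite extension then distributes this dimension equally among all maximal ideals lying over $\mathfrak p$, giving $\height_{\widetilde R}N=\dim\widetilde R_N=\dim R_\mathfrak p\ge 2$. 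In either case \cite[Corollary 4.6]{CCCEGIM} applies to $\widetilde R$ and yields that $\widetilde R$ is strictly closed.

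The one point that genuinely requires care is the height computation in case (ii): for a general integral extension heights can only drop, so without an extra hypothesis a height-one maximal ideal of $\widetilde R$ could in principle lie over a height-$\ge 2$ maximal ideal of $R$. The cleanest input ruling this out is Ratliff's fact that quasi-unmixedness (hence local equidimensionality together with universal catenarity) passes to module-finite birational extensions — which is exactly the mechanism already underlying both the Section 2 lemma $\dim_{R_\mathfrak q}M_\mathfrak q=\dim R_\mathfrak q$ and the inclusion $\widetilde R\subseteq\overline R$ used above. Once that is available, the remainder of the argument is pure bookkeeping of Corollaries \ref{integralcor2} and \ref{integral} and the quoted strict-closedness criterion.
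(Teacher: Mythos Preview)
Your approach is correct and is exactly the one the paper intends: the corollary is stated immediately after the discussion of \cite[Corollary 4.6]{CCCEGIM} and is left without proof, precisely because the only content is to feed $\widetilde R$ (which by Corollary \ref{integralcor2} is a module-finite birational extension satisfying WAP and $(\mathrm S_2)$) into that criterion.

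One remark on case (ii): your height transfer via quasi-unmixedness/equidimensionality is valid, but it is more work than needed, and the side claim that $\widetilde{(\cdot)}$ commutes with localization is not established in the paper (nor is it required). A shorter argument stays inside the paper's toolkit: for $N\in\Max\widetilde R$ set $\mathfrak p=N\cap R$, which is maximal since $\widetilde R$ is integral over $R$, so $\height_R\mathfrak p\ge 2$; as $R$ satisfies $(\mathrm S_1)$ one can pick $a,b\in\mathfrak p\cap\mathrm W(R)$ with $\height_R(a,b)\ge 2$, and Proposition \ref{basic} makes $a,b$ an $\widetilde R$-regular sequence. Since $a,b\in N$, localizing gives a regular sequence in $N\widetilde R_N$, whence $\height_{\widetilde R}N\ge\depth\widetilde R_N\ge 2$. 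This is essentially the computation already carried out in the proof of Proposition \ref{as a ring}.
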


Closing this paper, we prove the following.

\begin{prop}\label{4.9}
Let $(A,\m)$ be a Noetherian local ring with $\dim A=2$ possessing the canonical module $\rmK_A$. Suppose that $\overline{A}$ is a finitely generated $A$-module and that all the minimal prime ideals of $A$ have the same codimension. We choose non-zerodivisors $a,b\in\m$ on $A$ such that $a\in A:\overline{A}$, $a,b$ is a system of parameters of $A$, and $aA:_Ab=aA:_Ab^2$. 
Then 
$\widetilde{A}=\dfrac{A}{a}\cap\dfrac{A}{b}$
holds.
\end{prop}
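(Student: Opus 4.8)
The plan is to prove $\widetilde{A}=\frac{A}{a}\cap\frac{A}{b}$ by packaging matters into two assertions: first, $\frac{A}{a}\cap\frac{A}{b}=\widetilde{A}\cap\frac{A}{a}$; second, $\widetilde{A}\subseteq\frac{A}{a}$. Granting both, $\widetilde{A}=\widetilde{A}\cap\frac{A}{a}=\frac{A}{a}\cap\frac{A}{b}$ and we are done. For the first assertion, recall $\frac{A}{a}\cap\frac{A}{b}=\frac{aA:_Ab}{a}$, so it is enough to prove $aA:_Ab=\rmU(aA)$ and then invoke Theorem \ref{intersection}, which gives $\rmU(aA)=a\widetilde{A}\cap A$, equivalently $\frac{\rmU(aA)}{a}=\widetilde{A}\cap\frac{A}{a}$. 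For the second, I would combine $\widetilde{A}\subseteq\overline{A}$ with the conductor hypothesis $a\,\overline{A}\subseteq A$.

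First I would identify $\rmU(aA)=aA:_Ab$. Since $a$ is a non-zerodivisor it lies in no minimal prime of $A$, so by Krull's principal ideal theorem every prime minimal over $aA$ has height exactly one; hence $\Min^1_AA/aA=\Min_AA/aA$, and $\rmU(aA)$ is the intersection of the $\p$-primary components of $aA$ taken over the minimal primes $\p$ of $aA$. Moreover any embedded prime of $aA$ has height at least two, hence equals $\m$ because $\dim A=2$. Now, as $a,b$ is a system of parameters, $(a,b)$ is $\m$-primary, so a height-one prime containing $a$ cannot contain $b$; thus $b$ avoids every $\p\in\Min_AA/aA$ while $b\in\m$. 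It follows that $aA:_Ab^n=\rmU(aA)$ for $n\gg 0$. Since the chain $aA:_Ab\subseteq aA:_Ab^2\subseteq\cdots$ is constant from the first index where two consecutive terms agree, the hypothesis $aA:_Ab=aA:_Ab^2$ forces $aA:_Ab=aA:_Ab^n=\rmU(aA)$. This yields $\frac{A}{a}\cap\frac{A}{b}=\frac{aA:_Ab}{a}=\frac{\rmU(aA)}{a}=\widetilde{A}\cap\frac{A}{a}$ via Theorem \ref{intersection}.

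Next I would verify $\widetilde{A}\subseteq\overline{A}$. By hypothesis $A$ is equidimensional, and possessing a canonical module it is a homomorphic image of a Gorenstein local ring, hence (as a quotient of a Cohen-Macaulay ring) universally catenary; an equidimensional universally catenary local ring is quasi-unmixed, and the same properties pass to every localization $A_\p$ (equidimensionality of $A_\p$ following from the dimension formula in the catenary local domains $A/\q$, $\q\in\Min A$). Thus $A$ is locally quasi-unmixed, and Corollary \ref{integral} gives $\widetilde{A}\subseteq\overline{A}$. Since $a\in A:\overline{A}$, we get $a\widetilde{A}\subseteq a\,\overline{A}\subseteq A$, i.e. $\widetilde{A}\subseteq a^{-1}A=\frac{A}{a}$. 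Combining with the previous paragraph, $\widetilde{A}=\widetilde{A}\cap\frac{A}{a}=\frac{A}{a}\cap\frac{A}{b}$.

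I expect the crux to be the identification $\rmU(aA)=aA:_Ab$ for the prescribed element $b$: the assumption $aA:_Ab=aA:_Ab^2$ is precisely what collapses the colon chain to its stable value $\rmU(aA)$, and one must be careful that this stable value is genuinely the unmixed component, using that $b$ misses the height-one associated primes of $aA$ but lies in the only possible embedded prime $\m$. The other delicate point is confirming that the stated hypotheses force $A$ to be locally quasi-unmixed, so that Corollary \ref{integral} is available; everything after that is routine manipulation of the operations $M\mapsto\widetilde{M}$ and $L\mapsto L/a$.
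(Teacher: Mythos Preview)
Your proof is correct and follows essentially the same route as the paper's: both arguments identify $\rmU(aA)=aA:_Ab$ via the stabilized colon chain (using that $b$ misses the height-one primes over $aA$ but lies in the only possible embedded prime $\m$), and both use $\widetilde{A}=\dfrac{\rmU(aA)}{a}$. You are simply more explicit than the paper in justifying this last equality, breaking it into Theorem \ref{intersection} plus the inclusion $\widetilde{A}\subseteq\overline{A}\subseteq\dfrac{A}{a}$ via Corollary \ref{integral} and the conductor hypothesis, whereas the paper asserts $\widetilde{A}=\dfrac{\rmU(aA)}{a}$ directly.
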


\begin{proof}
Since $\overline{A}$ is finitely generated as an $A$-module, we can choose $a\in W(A)\cap (A:\overline{A})\cap\m$. Then 
$
\widetilde{A}=\frac{\rmU(aA)}{a}
$
where $\rmU(aA)$ denotes the minimal component of a primary decomposition of $aA$.
Let $b\in W(A)\cap\m$ such that $a,b$ is a system of parameters of $A$. Since $A$ is Noetherian, we may assume $aA:_Ab=aA:_Ab^2$ by taking a large enough powers of $b$. Hence $aA:_Ab=\bigcup_{n>0}aA:_Ab^n$.
 Since $\dim A=2$, there exists an $\m$-primary ideal $I$ of $A$ such that $aA=\rmU(aA)\cap I$. Thus, for all integers $i\gg 0$, we have $b^i\not\in \rmU(aA)$ and $b^i\in I$. Therefore we obtain $\rmU(aA)=\bigcup_{n>0}aA:_Ab^n$, and hence $\rmU(aA)=aA:_Ab$. 
\end{proof}

\begin{rem}
Proposition \ref{4.9} also holds for graded setting.
More precisely, let $A$ be a Noetherian positively graded ring with $\dim A=2$ possessing the graded canonical module $\rmK_A$. Suppose that $A_0$ is a local ring, $\overline{A}$ is a finitely generated $A$-module, and all the associated prime ideals of $A$ have the same codimension. We choose homogeneous non-zerodivisors $a,b$ on $A$ such that $a\in A:\overline{A}$, $a,b$ is a homogeneous system of parameters of $A$, and $aA:_Ab=aA:_Ab^2$. Then
$\widetilde{A}=\dfrac{A}{a}\cap\dfrac{A}{b}$
holds.
\end{rem}

\begin{ex}[see {\cite[Example 2.6]{EG}}]\label{ex}
Let $T = k[X,Y]$ be the polynomial ring over a field $k$. We consider the subring $R = k[X^5,XY^4, Y^5]$ of $T$. Note that $R$ is a Cohen-Macaulay ring and 
$$
\overline{R} = k[X^5,X^4Y,X^3Y^2,X^2Y^3,XY^4, Y^5]
$$
is a finitely generated $R$-module.
Let $R^a$ denote the weak Arf closure of $R$ (see \cite{CCCEGIM}). By \cite[Example 2.6]{EG}, we then have 
$$
R^a= R[X^9Y^6,X^8Y^7,X^4Y^{11}]=k[X^5,XY^4, Y^5, X^9Y^6, X^8Y^7, X^4Y^{11}]
$$
so that $R^a$ is not a Cohen-Macaulay ring. Set $A=R^a$. 
Since $$\overline{A}=AX^4Y+AX^3Y^2+AX^2Y^3,$$ we get $Y^{10}\in A:\overline{A}$. By setting $a=Y^{10}$ and $b=X^5$, the pair $a,b$ is a system of parameters of $A$.
In addition, we have
$$
aA:_{A}b=aA:_{A}b^2=(Y^{10},X^3Y^{17}, X^4Y^{16})A,
$$
whence
\begin{eqnarray*}
\widetilde{A}&=&\dfrac{A}{a}\cap\frac{A}{b}=\dfrac{aA:_{A}b}{a}=\dfrac{(Y^{10},X^3Y^{17}, X^4Y^{16})A}{Y^{10}} \\
&=&A+AX^3Y^{7}+AX^4Y^{6}=A[X^3Y^7, X^4Y^6].
\end{eqnarray*}
Therefore, the $(S_2)$-ification $\widetilde{R^a}$ is given by 
$$
\widetilde{R^a}=R^a[X^3Y^7, X^4Y^6]~ =R[X^3Y^7, X^4Y^6]~ =k[X^5, XY^4, Y^5, X^3Y^7, X^4Y^6].
$$
This is the weakly Arf $(S_2)$-ification of $R$.
Since $\widetilde{R^a} \ne \overline{R}$, the ring $\widetilde{R^a}$ is not normal. 
\end{ex}

\end{document}